\documentclass[preprint,letterpaper,10pt]{elsarticle}

\usepackage{amsmath}
\usepackage{amssymb}
\usepackage{amsthm}

%AB additional packages and commands
\usepackage{empheq}
\usepackage{xcolor}

\usepackage[margin=1.2in]{geometry}

\usepackage[colorlinks=true,linkcolor=blue,filecolor=blue,urlcolor=blue,citecolor=blue,pdftex,plainpages=false]{hyperref}

\usepackage{verbatim}

\usepackage{lineno}

%\journal{Journal of Computational Physics}

\newdefinition{defi}{Definition}
\newtheorem{lem}{Lemma}
\newtheorem{thm}{Theorem}
\newdefinition{rmk}{Remark}

\begin{document}

\begin{frontmatter}

\title{2N-storage Runge-Kutta methods:\\
$c$-reflection symmetry and factorization of the Butcher tableau}

\author{Alexei Bazavov}
\ead{bazavov@msu.edu}
\address{Department of Computational Mathematics, Science and Engineering and\\
Department of Physics and Astronomy,\\
Michigan State University, East Lansing, MI 48824, USA}

\begin{abstract}
Low-storage Runge-Kutta schemes of Williamson's type, so-called 2N-storage schemes, are further examined as a follow-up to the recent work. It is found that the augmented Butcher tableau factorizes into a product of matrices with special properties. Those properties reveal that the 2N-storage methods of the order of global accuracy less than five possess a symmetry, called $c$-reflection symmetry, \textit{i.e.} most methods exist in pairs. A transformation that relates the Butcher tableaux of the pairs is found and the fact that the $c$-reflected method satisfies the same order conditions as the original one is proven. Numerical evidence that validates the analytic results is presented. Branches of solutions for (5,4) methods, first explored by Carpenter and Kennedy, are constructed numerically. Four new (5,4) schemes with coefficients expressed in radicals and one with rational coefficients are examined for illustration. Eight new (6,4) schemes, some of which can be expressed in rationals or radicals, and one (8,4) scheme, are studied to understand the practical implications of the $c$-reflection symmetry for methods with higher number of stages.
In the absence of closed-form analytic solutions for 2N-storage Runge-Kutta methods of order four and above, the general symmetry properties, as well as some specific analytic solutions presented here, may help in development and optimization of 2N-storage schemes.
\end{abstract}

\begin{keyword}
%% keywords here, in the form: keyword \sep keyword

%% PACS codes here, in the form: \PACS code \sep code

%% MSC codes here, in the form: \MSC code \sep code
%% or \MSC[2008] code \sep code (2000 is the default)
Numerical analysis \sep Runge-Kutta methods \sep Low-storage Runge-Kutta methods (LSRK)
\end{keyword}

\end{frontmatter}

%\linenumbers

%% main text
\section{Introduction}
\label{sec_intro}
This paper is a follow-up to the recent work~\cite{Bazavov2025a}, where some new relations for 2N-storage Runge-Kutta methods of Williamson~\cite{WILLIAMSON198048} were uncovered. Those methods are reformulations of standard Runge-Kutta methods where the information at intermediate stages can be overwritten and the memory footprint for 2N-storage methods is smaller than for the standard ones. The focus of this work, however, is not on the computational or efficiency aspects, but the structural properties of the 2N-storage methods. The new features presented here may help in the development of new or refinement of the existing 2N-storage Runge-Kutta methods. It may also be useful to understand if they are unique to 2N-storage methods of Williamson's type or if other types of Runge-Kutta methods possess them.

It is probably easiest to start with an observation. Carpenter and Kennedy in Ref.~\cite{CK1994} presented several (4,3) and (5,4) low-storage methods. Of particular interest are the pair of (4,3) methods shown in Table~\ref{tab_43_CK} and the four methods shown in Table 1 of Ref.~\cite{CK1994} which are also reproduced in Table~\ref{tab_54_CK}. The notation $(s,p)$ represents an $s$-stage method of the order of global accuracy $p$.

From Table~\ref{tab_43_CK} one can notice that for the pair of (4,3) methods the $c_i$ coefficients of one method are equal to $1-c_i$ in reverse order for the other method. The same holds for the pairs ``SOLUTION 1'' and ``SOLUTION 2'', and ``SOLUTION 3'' and ``SOLUTION 4'' of the (5,4) pairs shown in Table~\ref{tab_54_CK}. (This may not be immediately apparent due to the floating point format, however, this is the case up to the precision with which the coefficients were computed.) This poses several questions:
\begin{itemize}
\item[Q1:] Is this accidental or is there some symmetry?
\item[Q2:] Is there a transformation that relates the Butcher tableaux of these pairs?
\item [Q3:] Is there a reason these methods were found in pairs in the first place?
\end{itemize}
Note that similar relations for the nodes $c_i$ are observed for methods that are called adjoint~\cite{HairerBook1} or reflected~\cite{ButcherBook} in the literature. The methods discussed here are completely different from those.

\begin{table}[t]
\centering
%	\hspace{-15mm}
\parbox{.35\linewidth}{
%\vspace{6.47mm}
\[
\begin{array}{c|cccc}
0 & & & \\[1.5mm]
\frac{1}{9} & \phantom{-}\frac{1}{9} & & & \\[1.5mm]
\frac{4}{9} & -\frac{11}{36} & \phantom{-}\frac{3}{4} & & \\[1.5mm]
\frac{6}{9} & -\frac{1}{12} & \phantom{-}\frac{7}{20} & \phantom{-}\frac{2}{5} & \\[1.5mm]
\hline\\[-4mm]
& -1 & 2 & -\frac{5}{4} & \phantom{-}\frac{5}{4}
\end{array}
\]
}
\parbox{0.1\linewidth}{
%\vspace{6.47mm}
\[
\begin{array}{c|c}
\phantom{-}0 & \frac{1}{9} \\[1.5mm]
-\frac{5}{9} & \frac{3}{4} \\[1.5mm]
-1 & \frac{2}{5} \\[1.5mm]
-\frac{33}{25} & \frac{5}{4} \\[1.5mm]
\end{array}
\]
}\hspace{0.05\linewidth}
\parbox{.35\linewidth}{
%\vspace{6.47mm}
\[
\begin{array}{c|cccc}
0 & & & \\[1.5mm]
\frac{3}{9} & \phantom{-}\frac{3}{9} & & & \\[1.5mm]
\frac{5}{9} & -\frac{5}{18} & \phantom{-}\frac{5}{6} & & \\[1.5mm]
\frac{8}{9} & \phantom{-}\frac{41}{90} & -\frac{1}{6} & \phantom{-}\frac{3}{5} & \\[1.5mm]
\hline\\[-4mm]
& \phantom{-}\frac{3}{20} & \phantom{-}\frac{1}{4} & \phantom{-}\frac{7}{20} & \phantom{-}\frac{1}{4}
\end{array}
\]
}
\parbox{0.1\linewidth}{
%\vspace{6.47mm}
\[
\begin{array}{c|c}
\phantom{-}0 & \frac{1}{3} \\[1.5mm]
-\frac{11}{15} & \frac{5}{6} \\[1.5mm]
-\frac{5}{3} & \frac{3}{5} \\[1.5mm]
-1 & \frac{1}{4} \\[1.5mm]
\end{array}
\]
}
\caption{A pair of (4,3) 2N-storage methods of Ref.~\cite{CK1994}.\label{tab_43_CK}}
\end{table}

\begin{table}[t]
\centering
\begin{tabular}{c|c|c|c|c}
COEF & SOLUTION 1 & SOLUTION 2 &
SOLUTION 3 & SOLUTION 4 \\
\hline
$A_1$  & $0$  & $0$ & $0$ & $0$ \\
$A_2$ & $-0.4812317431372$ & $-0.4801594388478$ & $-0.4178904745$ & $-0.7274361725534$ \\
$A_3$ & $-1.049562606709$ & $-1.4042471952$ & $-1.192151694643$ & $-1.906288083353$ \\
$A_4$ & $-1.602529574275$ & $-2.016477077503$ & $-1.697784692471$ & $-1.444507585809$ \\
$A_5$ & $-1.778267193916$ & $-1.056444269767$ & $-1.514183444257$ & $-1.365489400418$ \\
$B_1$ & $9.7618354692056E-2$ & $0.1028639988105$ & $0.1496590219993$ & $4.1717869324523E-2$ \\
$B_2$ & $0.4122532929155$ & $0.7408540575767$ & $0.3792103129999$ & $1.232835518522$ \\
$B_3$ & $0.4402169639311$ & $0.7426530946684$ & $0.8229550293869$ & $0.5242444514624$ \\
$B_4$ & $1.426311463224$ & $0.4694937902358$ & $0.6994504559488$ & $0.7212913223969$ \\
$B_5$ & $0.1978760537318$ & $0.1881733382888$ & $0.1530572479681$ & $0.2570977031703$ \\
$c_1$ & $0$ & $0$ & $0$ & $0$ \\
$c_2$ & $9.7618354692056E-2$ & $0.1028639988105$ & $0.1496590219993$ & $4.1717869324523E-2$  \\
$c_3$ & $0.3114822768438$ & $0.487989987833$ & $0.3704009573644$ & $0.377744236865$ \\
$c_4$ & $0.5120100121666$ & $0.6885177231562$ & $0.6222557631345$ & $0.6295990426348$ \\
$c_5$ & $0.8971360011895$ & $0.9023816453077$ & $0.9582821306748$ & $0.8503409780005$
\end{tabular}
\caption{Four (5,4) 2N-storage methods of Ref.~\cite{CK1994}.
\label{tab_54_CK}
}
\end{table}

A hint is present in the original paper by Williamson~\cite{WILLIAMSON198048} that introduced this particular 2N-storage format. For (3,3) methods an analytic solution is available and the 2N-storage constraint can be formulated as a single equation involving $c_2$ and $c_3$:
\begin{equation}
c_3^2(1-c_2)+c_3\left(c_2^2+\frac{1}{2}c_2-1\right)
+\left(\frac{1}{3}-\frac{1}{2}c_2\right)=0.\label{eq_Wc3}
\end{equation}
$c_2$ can be taken as a free parameter, Eq.~(\ref{eq_Wc3}) solved for $c_3$ and the rest of the Butcher tableau constructed from $c_2$ and $c_3$ with the well-known formulas for standard (3,3) Runge-Kutta methods. As was noted in Ref.~\cite{Bazavov2021}, and has probably been known to practitioners, Eq.~(\ref{eq_Wc3}) has a reflection symmetry with respect to $c_2+c_3=1$ line. There are three branches of solutions that are shown in Fig.~\ref{fig_Wcurves}, called the Williamson curves. Usually, discussion is limited to the $0\leqslant c_2,c_3\leqslant1$ region, and only one branch is shown, in the region enclosed in the box close to the center of the figure. For the discussion here, although one is unlikely to be interested in methods with $c_2<-2.7$ or $c_3>3.7$, it is important to take a note of that symmetry of (3,3) methods.

\begin{figure}[t]
\centering
\includegraphics[width=0.6\textwidth]{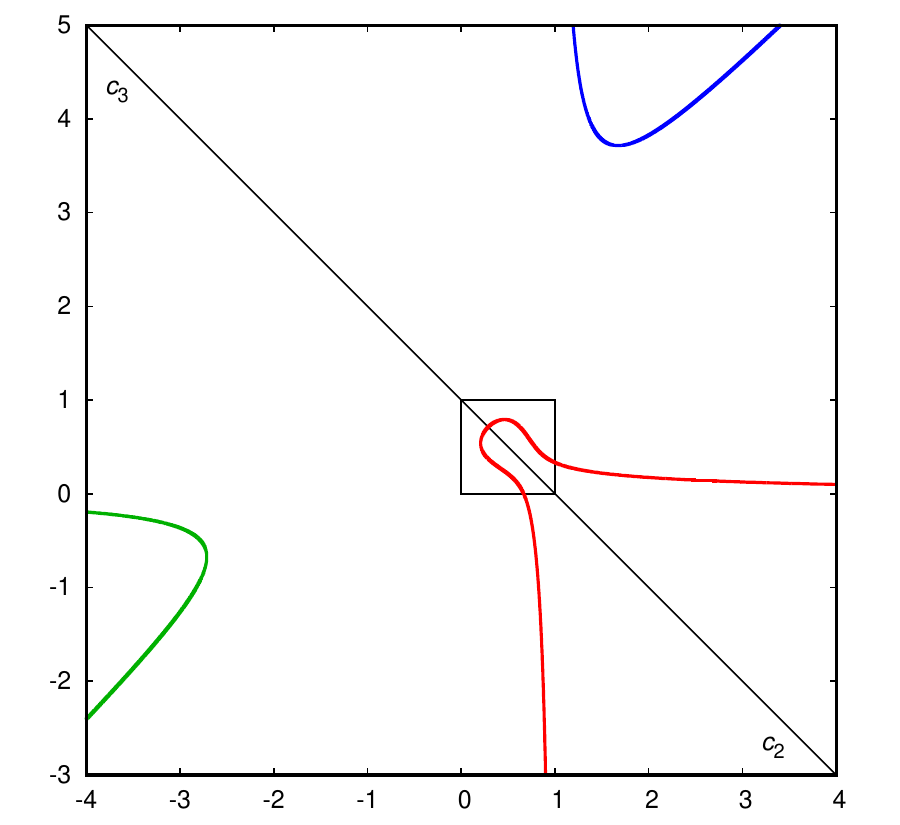}
\caption{The three branches of solutions of Eq.~(\ref{eq_Wc3}) that completely classify (3,3) 2N-storage methods in the $c_2-c_3$ plane.
\label{fig_Wcurves}
}
\end{figure}

The paper is structured as follows. In Sec.~\ref{sec_rk} well-known and recently derived~\cite{Bazavov2025a} results for 2N-storage Runge-Kutta methods are briefly reviewed. A comprehensive account of the properties of 2N-storage methods of Williamson's type has been recently presented in Ref.~\cite{Bazavov2025a} and it may be helpful, although not necessary, for the reader to consult that reference first. In Sec.~\ref{sec_augm_tab} a less conventional matrix form of the Butcher tableau is introduced, called the augmented Butcher tableau and the standard order conditions up to and including order four are written in the matrix form. In Sec.~\ref{sec_factor} it is demonstrated that the augmented Butcher tableau matrix factorizes into a product of matrices that have some special structure and uncommon properties. In Sec.~\ref{sec_csym} the peculiar properties exemplified in the introduction with the (4,3) and (5,4) methods of Ref.~\cite{CK1994} are examined with the newly developed tools, some symmetry properties and its effects on the order of the method are proven and the questions posed in the introduction answered. In Sec.~\ref{sec_num} numerical experiments are briefly described and practical implications discussed. 
Five new (5,4), eight (6,4) and one (8,4) schemes are numerically examined for illustrative purposes.
The conclusions are presented in Sec.~\ref{sec_concl}. Correctness of the derived analytic results is also illustrated with a Matlab script, included in \ref{sec_app_matlab}.

\section{2N-storage Runge-Kutta methods}
\label{sec_rk}
Runge-Kutta methods of the form
\begin{eqnarray}
y_i&=&y_t + h\sum_{j=1}^{s}a_{ij}k_j,\label{eq_yi}\\
k_i&=&f(t+hc_i,y_i),\\
i&=&1,\dots,s,\\
y_{t+h}&=&y_t+h\sum_{i=1}^{s}b_ik_i\label{eq_yth},
\end{eqnarray}
that are called \textit{standard} or \textit{classical} here, have been successfully applied for solving initial value problems of the form
\begin{equation}
\label{eq_dydt}
\frac{dy}{dt}=f(t,y),
\end{equation}
with some initial condition $y(t_0)$. The coefficients $c_i$ are called \textit{nodes} and $b_i$ \textit{weights} of the method and only explicit Runge-Kutta methods with $a_{ij}=0$ for $i\leqslant j$ also satisfying
\begin{equation}
\label{eq_c_from_a}
c_i=\sum_{j=1}^{i-1}a_{ij}
\end{equation}
are considered here.

To minimize the memory footprint, Williamson suggested~\cite{WILLIAMSON198048} a different form of an explicit Runge-Kutta method:
\begin{eqnarray}
\Delta y_i &=& A_{i}\Delta y_{i-1}+hf(t+c_{i}h,y_{i-1}),\label{eq_2N_W_dyi}\\
y_i &=& y_{i-1} + B_{i}\Delta y_i,\\
i &=&1,\dots,s.\label{eq_2N_W_i}
\end{eqnarray}
called 2N-storage Runge-Kutta method, to distinguish from other types of low-storage methods that have been and are being developed since then.

This work focuses exclusively on 2N-storage methods of Williamson's type and builds upon recent results of Ref.~\cite{Bazavov2025a}. The main results from there, relevant to the present work, are the following relations that hold for a general 2N-storage method as proven in Ref.~\cite{Bazavov2025a}:
\begin{equation}
\label{eq_a_bc_rec}
a_{ij}=\frac{b_j}{\displaystyle\sum_{k=1}^j b_k-c_j}\left[c_i-c_j-\sum_{k=j+1}^{i-1}a_{ik}\right],
\end{equation}
and
\begin{eqnarray}
A_i &=& \frac{b_{i-1}}{b_i}\,\frac{\displaystyle \sum_{k=1}^{i-1}b_k-c_{i}}{\displaystyle \sum_{k=1}^{i-1}b_k-c_{i-1}},\label{eq_A_bc}\\
B_i &=& \frac{b_i}{\displaystyle\sum_{k=1}^i b_k-c_i}(c_{i+1}-c_i),\label{eq_B_bc},\\
i &=& 1,\dots,s.
\end{eqnarray}
Eqs.~(\ref{eq_a_bc_rec})--(\ref{eq_B_bc}) is all one needs for the further discussion, as a very different form of a 2N-storage method will be introduced starting from these equations. This is the reason the material of this paper is separated from Ref.~\cite{Bazavov2025a}.

\section{Augmented form of the Butcher tableau and order conditions}
\label{sec_augm_tab}
\begin{table}[h]
\centering
\hspace{-15mm}
\[
\begin{array}{c|cccc}
0 & & & \\[1.5mm]
c_2 & a_{21} & & & \\[1.5mm]
c_3 & a_{31} & a_{32} & & \\[1.5mm]
c_4 & a_{41} & a_{42} & a_{43} & \\[1.5mm]
\hline\\[-4mm]
& b_1 & b_2 & b_3  & b_4
\end{array}
\]
\caption{The Butcher tableau of an explicit Runge-Kutta method with $s=4$ stages. The coefficients $c_i$ are called the nodes, and $b_i$ the weights.\label{tab_B}}
\end{table}

The standard form of the Butcher tableau for an explicit Runge-Kutta method is shown in Table~\ref{tab_B}. For illustration purposes the number of stages $s=4$ is used throughout the paper, but the discussion is general. It is customary to treat the coefficients of the last stage $b_i$ differently from $a_{ij}$. The order conditions are expressed in the form shown in \ref{sec_app_oc} that involves $a_{ij}$, $b_i$ and $c_i$. In 2N-storage methods all stages are equivalent and therefore for the discussion presented here it is more convenient to represent the Butcher tableau and the $c_i$ coefficients as the following $(s+1)\times(s+1)$ matrices:
\begin{equation}
\label{eq_augm_B}
A=\left(
\begin{array}{lllll}
0        & 0        & 0        & 0 & 0 \\
a_{21} & 0        & 0        & 0 & 0 \\
a_{31} & a_{32} & 0        & 0 & 0 \\
a_{41} & a_{42} & a_{43} & 0 & 0 \\
a_{51} & a_{52} & a_{53}  & a_{54} & 0
\end{array}
\right),\,\,\,\,\,\,\,\,\,\,
C=\left(
\begin{array}{lllll}
c_1        & 0        & 0        & 0 & 0 \\
0 & c_2        & 0        & 0 & 0 \\
0 & 0 & c_3        & 0 & 0 \\
0 & 0 & 0 & c_4 & 0 \\
0 & 0 & 0  & 0 & c_5
\end{array}
\right).
\end{equation}
The last row $a_{s+1,i}\equiv b_i$, $c_1\equiv0$ and $c_{s+1}\equiv1$. The matrix $C$ is diagonal, \textit{i.e.},
\begin{equation}
\label{eq_C}
C_{ij}=c_i\delta_{ij}.
\end{equation}

\begin{defi}
Representation of the Butcher tableau in Eq.~(\ref{eq_augm_B}) is called an \textit{augmented Butcher tableau}.
\end{defi}

Consider now one of the order conditions, for instance, Eq.~(\ref{eq_oc_RK_bcac}):
\begin{equation}
\sum_{i=1}^{s+1}\sum_{j=1}^{s+1} b_i c_i a_{ij} c_j=
\sum_{i=1}^{s+1}\sum_{j=1}^{s+1}
a_{s+1,i}\sum_{k=1}^{s+1}c_i\delta_{ik}a_{kj}\sum_{l=1}^{s+1}c_j\delta_{jl}=
\sum_{i=1}^{s+1}\sum_{j=1}^{s+1}\sum_{k=1}^{s+1}\sum_{l=1}^{s+1}
a_{s+1,i}C_{ik}a_{kj}C_{jl}.
\end{equation}
In other words, the order condition (\ref{eq_oc_RK_bcac}) can be represented as a product of matrices $A$ and $C$ in the corresponding order with the last step being summation of the bottom row of the result:
\begin{equation}
\sum_{l=1}^{s+1}(ACAC)_{s+1,l}=\frac{1}{8}.\label{eq_ACAC_new1}
\end{equation}

The following two matrices will prove convenient when working with the order conditions in the form such as Eq.~(\ref{eq_ACAC_new1}) (exemplified again for $s=4$):
\begin{equation}
P=\left(
\begin{array}{lllll}
0 & 0 & 0 & 0 & 1 \\
0 & 0 & 0 & 0 & 1 \\
0 & 0 & 0 & 0 & 1 \\
0 & 0 & 0 & 0 & 1 \\
0 & 0 & 0 & 0 & 1
\end{array}
\right),\,\,\,\,\,\,\,\,\,\,
Q=\left(
\begin{array}{lllll}
1 & 1 & 1 & 1 & 1 \\
0 & 0 & 0 & 0 & 0 \\
0 & 0 & 0 & 0 & 0 \\
0 & 0 & 0 & 0 & 0 \\
0 & 0 & 0 & 0 & 0
\end{array}
\right).
\end{equation}
The effect of multiplication by $P$ on the left of some matrix $M$ is that all rows of the resulting matrix $PM$ are equal to the bottom row of the matrix $M$. The application of the $Q$ matrix will be discussed later. With the help of the matrix $P$, Eq.~(\ref{eq_ACAC_new1}) can be written as
\begin{equation}
{\rm Tr}[PACAC]=\frac{1}{8},
\end{equation}
since all rows are the same and the trace sums the elements from every column.

One can easily verify that all order conditions up to and including order four listed in \ref{sec_app_oc} can be written in the form\footnote{All but one order condition at order five can also be written in the same form. The one that is different involves a tree with two symmetric branches of length two and requires element-wise rather than matrix multiplication.}:
\begin{eqnarray}
{\rm Tr}[PA] &=&1,\label{eq_oc_RK_b_P}\\
{\rm Tr}[PAC] &=&\frac{1}{2},\label{eq_oc_RK_bc_P}\\
{\rm Tr}[PACC] &=&\frac{1}{3},\label{eq_oc_RK_bc2_P}\\
{\rm Tr}[PAAC] &=&\frac{1}{6}.\label{eq_oc_RK_bac_P}\\
{\rm Tr}[PACCC] &=&\frac{1}{4},\label{eq_oc_RK_bc3_P}\\
{\rm Tr}[PACAC] &=&\frac{1}{8},\label{eq_oc_RK_bcac_P}\\
{\rm Tr}[PAACC] &=&\frac{1}{12},\label{eq_oc_RK_bac2_P}\\
{\rm Tr}[PAAAC] &=&\frac{1}{24}.\label{eq_oc_RK_ba2c_P}
\end{eqnarray}
One can replace, \textit{e.g.}, $AAA$ with $A^3$, however, the structure is more transparent when single powers of matrices are used.

\section{Factorization of the Butcher tableau for 2N-storage methods}
\label{sec_factor}
\subsection{The $d$-form of a 2N-storage Runge-Kutta method}

First, note that Eq.~(\ref{eq_a_bc_rec}) also holds for the last row of matrix $A$:
\begin{equation}
a_{s+1,j}=\frac{b_j}{\displaystyle\sum_{k=1}^j b_k-c_j}
\left[c_{s+1}-c_j-\sum_{k=j+1}^{s}a_{s+1,k}\right]=
\frac{b_j}{\displaystyle\sum_{k=1}^j b_k-c_j}
\left[1-c_j-\sum_{k=j+1}^{s}b_{k}\right]=b_j.
\end{equation}
The order condition (\ref{eq_oc_RK_b}), $c_{s+1}\equiv1$ and $a_{s+1,i}\equiv b_i$ were used to cancel the numerator and denominator.

Assuming the corresponding denominators do not vanish, let
\begin{equation}
\label{eq_dj}
d_j\equiv\frac{B_j}{c_{j+1}-c_j}=\frac{b_j}{\displaystyle\sum_{k=1}^j b_k-c_j}
\end{equation}
and
\begin{equation}
\label{eq_f_ij}
f_{ij}\equiv\left\{
\begin{array}{ll}
c_i-c_j, & i>j,\\
0, & \mbox{otherwise},
\end{array}
\right.
\end{equation}
and additionally $d_{s+1}\equiv1$. Equation~(\ref{eq_dj}) also implies that $d_1\equiv1$. For a 2N-storage Runge-Kutta method $d_i\neq0$ since $B_i\neq0$, so as long as methods with $c_{i+1}\neq c_i$ are considered, all $d_i$ are finite.
Then the $a_{ij}$ coefficients of a 2N-storage Runge-Kutta method can be represented in the following form:
\begin{equation}
\label{eq_a_d}
a_{ij}=d_j\left[f_{ij}-\sum_{k=j+1}^{i-1}a_{ik}\right]=
d_j\left[f_{ij}-\sum_{k=j+1}^{s+1}a_{ik}\right]
\end{equation}
which is simply Eq.~(\ref{eq_a_bc_rec}) written in new variables.
The sum is extended to $s+1$ since $a_{ij}=0$ for $i\leqslant j$.
\begin{defi}
Equation~(\ref{eq_a_d}) is referred to as the $d$-form of the method.
\end{defi}

\subsection{Factorization of the Butcher tableau}

The coefficients $f_{ij}$ defined by Eq.~(\ref{eq_f_ij}) form a matrix $F$ that can be represented in a compact way with the help of the following lower triangular matrix of ones:
\begin{equation}
L=\left(
\begin{array}{lllll}
1 & 0 & 0 & 0 & 0 \\
1 & 1 & 0 & 0 & 0 \\
1 & 1 & 1 & 0 & 0 \\
1 & 1 & 1 & 1 & 0 \\
1 & 1 & 1  & 1 & 1
\end{array}
\right).
\end{equation}
Then the following holds (for arbitrary $s$, here $s=4$ method is used for illustration):
\begin{equation}
\label{eq_F_CLLC}
F\equiv\left(
\begin{array}{lllll}
0 & 0 & 0 & 0 & 0 \\
f_{21} & 0 & 0 & 0 & 0 \\
f_{31} & f_{32} & 0 & 0 & 0 \\
f_{41} & f_{42} & f_{43} & 0 & 0 \\
f_{51} & f_{52} & f_{53}  & f_{54} & 0
\end{array}
\right)\equiv\left(
\begin{array}{lllll}
0 & 0 & 0 & 0 & 0 \\
c_2-c_1 & 0 & 0 & 0 & 0 \\
c_3-c_1 & c_3-c_2 & 0 & 0 & 0 \\
c_4-c_1 & c_4-c_2 & c_4-c_3 & 0 & 0 \\
c_5-c_1 & c_5-c_2 & c_5-c_3 & c_5-c_4 & 0
\end{array}
\right)=CL-LC=[C,L].
\end{equation}

Next, by construction, $d_1=B_1/(c_2-c_1)=a_{21}/c_2=1$. Nevertheless, it is convenient to keep $d_1$ as a parameter. The non-zero elements of the first column of $A$ are given by
\begin{equation}
\label{eq_a_i1}
a_{i1}=d_1\left[f_{i1}-\sum_{k=2}^{s+1}a_{ik}\right]=f_{i1}\,d_1+\sum_{k=2}^{s+1}a_{ik}\,(-d_1).
\end{equation}
Eq.~(\ref{eq_a_i1}) can be interpreted as a dot product of a row vector
\begin{equation}
(\,\,\,f_{i1}\,\,\,a_{i2}\,\,\,a_{i3}\,\,\,\dots\,\,\,)
\end{equation}
and a column vector
\begin{equation}
\label{eq_d1_vec}
(\,\,\,d_{1}\,\,\,-d_{1}\,\,\,-d_{1}\,\,\,\dots\,\,\,)^T.
\end{equation}
When the column (\ref{eq_d1_vec}) is embedded as the first column into the identity matrix, the following holds
\begin{equation}
A=
\left(
\begin{array}{lllll}
0        & 0        & 0        & 0 & 0 \\
f_{21} & 0        & 0        & 0 & 0 \\
f_{31} & a_{32} & 0        & 0 & 0 \\
f_{41} & a_{42} & a_{43} & 0 & 0 \\
f_{51} & a_{52} & a_{53}  & a_{54} & 0
\end{array}
\right)
\left(
\begin{array}{rrrrr}
d_1 & 0 & 0 & 0 & 0 \\
-d_1 & 1 & 0 & 0 & 0 \\
-d_1 & 0 & 1 & 0 & 0 \\
-d_1 & 0 & 0 & 1 & 0 \\
-d_1 & 0 & 0  & 0 & 1
\end{array}
\right)
\end{equation}
The structure of the second column of $A$ is similar
\begin{equation}
\label{eq_a_i2}
a_{i2}=d_2\left[f_{i2}-\sum_{k=2}^{s+1}a_{ik}\right]=f_{i2}\,d_2+\sum_{k=2}^{s+1}a_{ik}\,(-d_2)
\end{equation}
and therefore
\begin{equation}
A=
\left(
\begin{array}{lllll}
0        & 0        & 0        & 0 & 0 \\
f_{21} & 0        & 0        & 0 & 0 \\
f_{31} & f_{32} & 0        & 0 & 0 \\
f_{41} & f_{42} & a_{43} & 0 & 0 \\
f_{51} & f_{52} & a_{53}  & a_{54} & 0
\end{array}
\right)
\left(
\begin{array}{rrrrr}
1 & 0\phantom{_2} & 0 & 0 & 0 \\
0 & d_2 & 0 & 0 & 0 \\
0 & -d_2 & 1 & 0 & 0 \\
0 & -d_2 & 0 & 1 & 0 \\
0 & -d_2 & 0  & 0 & 1
\end{array}
\right)
\left(
\begin{array}{rrrrr}
d_1 & 0 & 0 & 0 & 0 \\
-d_1 & 1 & 0 & 0 & 0 \\
-d_1 & 0 & 1 & 0 & 0 \\
-d_1 & 0 & 0 & 1 & 0 \\
-d_1 & 0 & 0  & 0 & 1
\end{array}
\right).
\end{equation}
Continuation of this process proves the following
\begin{lem}
\label{lem1}
The augmented Butcher tableau of a 2N-storage Runge-Kutta method (in the general case, when no numerators or denominators vanish in Eq.~(\ref{eq_dj})) can be factorized into the following product: 
\begin{equation}
\label{eq_A_F_singleDs}
A=F\hat D_{s+1}\hat  D_s\dots\hat D_2\hat D_1,
\end{equation}
where the matrix $F$ is given by Eq.~(\ref{eq_F_CLLC}) and the matrices $\hat D_i$ are defined as
\begin{equation}
\label{eq_hatD_i}
(\hat D_i)_{mn}\equiv\left\{
\begin{array}{rl}
1\phantom{_i}, & m=n\neq i,\\
d_i, & m=n=i,\\
-d_i, & n=i, m>n,\\
0\phantom{_i}, & \mbox{otherwise},
\end{array}
\right.
\end{equation}
and $d_1=d_{s+1}=1$.
\end{lem}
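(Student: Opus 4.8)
The natural approach is a finite induction on columns that formalizes the ``continuation of this process'' already carried out for the first two columns in the excerpt. The plan is to introduce, for $m=0,1,\dots,s$, the intermediate $(s+1)\times(s+1)$ matrix $A^{(m)}$ whose columns $1,\dots,m$ coincide with the corresponding columns of $F$ and whose columns $m+1,\dots,s+1$ coincide with those of $A$, so that $A^{(0)}=A$. The statement to be established by induction on $m$ is
\begin{equation}
A^{(m-1)}=A^{(m)}\hat D_m,\qquad m=1,\dots,s,
\end{equation}
after which the factorization follows by composing these identities from the right,
\begin{equation}
A=A^{(0)}=A^{(1)}\hat D_1=A^{(2)}\hat D_2\hat D_1=\dots=A^{(s)}\hat D_s\cdots\hat D_2\hat D_1 .
\end{equation}

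For the inductive step I would compute $(A^{(m)}\hat D_m)_{ij}$ directly from the definition (\ref{eq_hatD_i}). When $j\neq m$ the matrix $\hat D_m$ acts as the identity in column $j$, so the entry of the product is simply $A^{(m)}_{ij}$, which equals $f_{ij}$ for $j<m$ and $a_{ij}$ for $j>m$, that is, exactly $A^{(m-1)}_{ij}$. When $j=m$ only the diagonal entry $d_m$ and the subdiagonal entries $-d_m$ of column $m$ of $\hat D_m$ contribute, and since columns $m+1,\dots,s+1$ of $A^{(m)}$ still carry the original $a_{ik}$,
\begin{equation}
(A^{(m)}\hat D_m)_{im}=d_m\,A^{(m)}_{im}-d_m\!\!\sum_{k=m+1}^{s+1}\!\!A^{(m)}_{ik}
=d_m\Big[f_{im}-\!\!\sum_{k=m+1}^{s+1}\!\!a_{ik}\Big]=a_{im}=A^{(m-1)}_{im},
\end{equation}
where the third equality is precisely the $d$-form (\ref{eq_a_d}). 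A point to be careful about here is that (\ref{eq_a_d}) has been shown to hold for \textit{every} row, including the weight row $i=s+1$ (where $a_{s+1,j}=b_j$ and (\ref{eq_a_bc_rec}) also applies), so the computation is legitimate for all $i$; this is what makes the single matrix factorization encompass the $b_i$ together with the $a_{ij}$.

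It then remains to identify $A^{(s)}$ and to account for $\hat D_{s+1}$. By construction columns $1,\dots,s$ of $A^{(s)}$ are those of $F$; its last column is the last column of $A$, which vanishes, and this agrees with the last column of $F$ since $f_{i,s+1}=0$ for all $i$; hence $A^{(s)}=F$. Finally, with the convention $d_{s+1}=1$ the definition (\ref{eq_hatD_i}) makes $\hat D_{s+1}$ the identity, so it may be prepended to the product without change, yielding $A=F\hat D_{s+1}\hat D_s\cdots\hat D_2\hat D_1$, which is (\ref{eq_A_F_singleDs}). There is no substantive obstacle in this argument; the only delicate points are the bookkeeping of which columns have already been converted to $f$-entries at stage $m$, the applicability of the $d$-form to the weight row, and the boundary conventions $d_1=d_{s+1}=1$ — with $d_1$ playing no role in the factorization and retained only as a parameter, as already noted.
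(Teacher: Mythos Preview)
Your proof is correct and is precisely the formalization of the paper's own argument: the paper carries out the first two steps explicitly (peeling off columns $1$ and $2$ to replace $a_{ij}$ by $f_{ij}$ via right-multiplication by $\hat D_1$, then $\hat D_2$) and then simply states ``continuation of this process proves the following,'' which is exactly the finite induction on $m$ with the intermediate matrices $A^{(m)}$ that you have written out. Your handling of the boundary cases $A^{(s)}=F$ and $\hat D_{s+1}=I$ is also correct and makes explicit what the paper leaves implicit.
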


After combining all the factors $\hat D_i$ in the Eq.~(\ref{eq_A_F_singleDs}) into a single matrix
\begin{equation}
\label{eq_D_Di}
D\equiv\prod_{i=1}^{s+1}D_{s+2-i},
\end{equation}
the factorization of the Butcher tableau is simply
\begin{equation}
\label{eq_A_FD}
A=FD.
\end{equation}
As $F$ is not invertible, factorization in Eq.~(\ref{eq_A_FD}) does not appear to be related to the Shu-Osher form~\cite{SHU1988439}.

\subsection{Explicit form and properties of the factor $D$}

The matrix $D$ defined by Eq.~(\ref{eq_D_Di}) and its inverse have some special properties to be used in the later proofs. They will be explicitly constructed and their properties proven in this and the next section.

The following products of the parameters $d_i$ will prove useful:
\begin{equation}
\label{eq_z_ij}
d_{ij}\equiv\left\{
\begin{array}{rl}
0\phantom{_i}, & i<j,\\
d_i, & i=j,\\
-d_j\left[\prod_{k=j+1}^{i-1}(1-d_k)\right]d_i, & i>j.
\end{array}
\right.
\end{equation}

\begin{lem}
\label{lem2}
For the sum, denoted $S_{il}$, of quantities $d_{ij}$, defined by Eq.~(\ref{eq_z_ij}), on the second index the following holds
\begin{equation}
\label{eq_Sr_il}
S_{il}\equiv\sum_{j=1}^l d_{ij}=\left\{
\begin{array}{rl}
0, & l\geqslant i>1,\\
-\left[\prod_{k=l+1}^{i-1}(1-d_k)\right]d_i, & l<i,\\
1, & l\geqslant i=1.
\end{array}
\right.
\end{equation}
\end{lem}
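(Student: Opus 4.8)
The plan is to reduce all three cases of Eq.~(\ref{eq_Sr_il}) to a single telescoping identity. First I would dispose of $i=1$: by the top branch of Eq.~(\ref{eq_z_ij}) one has $d_{1j}=0$ for every $j>1$, so for any $l\geqslant1$ the sum collapses to $S_{1l}=d_{11}=d_1=1$, which is the last line of Eq.~(\ref{eq_Sr_il}) and uses the normalization $d_1=1$ inherited from Lemma~\ref{lem1}.

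Next I would handle the case $l<i$. Here every index in $\sum_{j=1}^{l}d_{ij}$ satisfies $j\leqslant l<i$, so only the bottom branch of Eq.~(\ref{eq_z_ij}) contributes; pulling $-d_i$ out of each term reduces the claim to
\[
T_l\;\equiv\;\sum_{j=1}^{l}d_j\prod_{k=j+1}^{i-1}(1-d_k)\;=\;\prod_{k=l+1}^{i-1}(1-d_k),
\]
which I would prove by induction on $l$. The inductive step is pure telescoping: $T_l-T_{l-1}=d_l\prod_{k=l+1}^{i-1}(1-d_k)$, and this equals $\prod_{k=l+1}^{i-1}(1-d_k)-\prod_{k=l}^{i-1}(1-d_k)$, the difference of the two claimed closed forms. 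The base case $l=1$ reads $T_1=d_1\prod_{k=2}^{i-1}(1-d_k)=\prod_{k=2}^{i-1}(1-d_k)$, which is precisely where $d_1=1$ is invoked; alternatively one may anchor the induction at the empty sum $l=0$, where the right-hand side carries the vanishing factor $1-d_1$. This establishes the middle line of Eq.~(\ref{eq_Sr_il}).

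Finally, for $l\geqslant i>1$ I would split $S_{il}=\sum_{j=1}^{i-1}d_{ij}+d_{ii}+\sum_{j=i+1}^{l}d_{ij}$. The tail sum vanishes term by term because $d_{ij}=0$ whenever $i<j$; the head sum is exactly $S_{i,i-1}$, and the case just proven (with $l=i-1$) gives $S_{i,i-1}=-\big[\prod_{k=i}^{i-1}(1-d_k)\big]d_i=-d_i$ since the product is empty. Hence $S_{il}=-d_i+d_{ii}=-d_i+d_i=0$, the first line. The only real content is the telescoping identity for $T_l$; everything else is bookkeeping with the piecewise definition of $d_{ij}$, and the one point to keep in mind is that the base case of that induction is the only place where the hypothesis $d_1=1$ is genuinely used.
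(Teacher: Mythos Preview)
Your proof is correct and follows essentially the same route as the paper: both arguments handle $l<i$ by induction on $l$ using the telescoping cancellation $(1-d_l)\prod_{k=l+1}^{i-1}(1-d_k)+d_l\prod_{k=l+1}^{i-1}(1-d_k)=\prod_{k=l}^{i-1}(1-d_k)$, then read off the $l\geqslant i$ cases from $S_{i,i-1}=-d_i$. Your version is slightly tidier in that you factor out $-d_i$ up front and name the telescoping sum $T_l$ explicitly, but the mathematical content is identical to the paper's proof.
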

\begin{proof}
The proof is by induction. Consider first $i>l$ (and $i>1$). Then
$$
S_{i1}=d_{i1}=-d_1\left[\prod_{k=2}^{i-1}(1-d_k)\right]d_i=-\left[\prod_{k=2}^{i-1}(1-d_k)\right]d_i,
$$
since $d_1=1$. Thus, $S_{i1}$ satisfies Eq.~(\ref{eq_Sr_il}). Assume now that Eq.~(\ref{eq_Sr_il}) holds for some $l=n<i-2$. Then
\begin{eqnarray}
S_{i,n+1}&=&S_{i,n}+d_{i,n+1}=-\left[\prod_{k=n+1}^{i-1}(1-d_k)\right]d_i
-d_{n+1}\left[\prod_{k=n+2}^{i-1}(1-d_k)\right]d_i\nonumber\\
&=&-(1-d_{n+1})\left[\prod_{k=n+2}^{i-1}(1-d_k)\right]d_i
-d_{n+1}\left[\prod_{k=n+2}^{i-1}(1-d_k)\right]d_i\nonumber\\
&=&-\left[\prod_{k=(n+1)+1}^{i-1}(1-d_k)\right]d_i.
\end{eqnarray}
Thus, Eq.~(\ref{eq_Sr_il}) also holds for $l=n+1$.
For $l=i-1$
$$
S_{i,i-1}=-d_i
$$
and thus
\begin{equation}
S_{ii}=S_{i,i-1}+d_{ii}=-d_i+d_i=0.
\end{equation}
For $i<l$ the sum stops at $j=i$ since the rest of the sum includes $d_{ij}=0$ for $i<j$.

For $i=l=1$
$$
S_{11}=d_{11}=d_1=1.
$$
This completes the proof.
\end{proof}

\begin{thm}
The matrix elements of the product matrix $D$ defined in Eq.~(\ref{eq_D_Di}) are the quantities $d_{ij}$ defined in Eq.~(\ref{eq_z_ij}).
\end{thm}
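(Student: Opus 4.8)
The plan is to prove the claim by induction on the number of factors, working from the right end of the product in Eq.~(\ref{eq_D_Di}). Write $D^{(m)} \equiv D_{s+1}D_{s}\cdots D_{s+2-m}$ for the partial product of the last $m$ factors (so $D^{(s+1)} = D$), and let $d^{(m)}_{ij}$ denote the analog of Eq.~(\ref{eq_z_ij}) where the products over $k$ are restricted to indices $k$ that actually appear among the factors already multiplied in. The base case is $D^{(1)} = \hat D_1$, whose entries match $d_{ij}$ trivially (first column is $(d_1, -d_1, \dots, -d_1)^T$ with $d_1=1$, rest is identity). The inductive step multiplies $D^{(m)}$ on the left by the next factor $\hat D_{j_0}$ where $j_0 = s+1-m$, and I need to check that the result has the claimed $d_{ij}$ entries.

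The key computation in the inductive step is the effect of left-multiplication by $\hat D_{j_0}$. By the definition in Eq.~(\ref{eq_hatD_i}), $\hat D_{j_0}$ differs from the identity only in column $j_0$: it scales the diagonal entry by $d_{j_0}$ and places $-d_{j_0}$ below the diagonal in that column. Concretely, $(\hat D_{j_0} M)_{mn} = M_{mn}$ for $m < j_0$; $(\hat D_{j_0} M)_{j_0,n} = d_{j_0} M_{j_0,n}$; and $(\hat D_{j_0} M)_{mn} = M_{mn} - d_{j_0} M_{j_0,n}$ for $m > j_0$. I would then feed in $M = D^{(m)}$, whose row $j_0$ is simply $e_{j_0}^T$ (the $j_0$-th standard basis row) because $j_0$ is smaller than every index appearing in the factors of $D^{(m)}$, so those factors act as the identity on row $j_0$. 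Hence the correction term $-d_{j_0} M_{j_0,n}$ is nonzero only when $n = j_0$, and there it equals $-d_{j_0}$. This immediately produces: column $j_0$ of the new matrix is $(\text{zeros}, d_{j_0}, -d_{j_0}, \dots, -d_{j_0})^T$ from the $j_0$-th row down; and for $n > j_0$, the entries in rows $m > j_0$ are multiplied through the recursion that generates the factor $(1-d_k)$ telescoping product in Eq.~(\ref{eq_z_ij}).

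The cleanest way to organize the bookkeeping is to invoke Lemma~\ref{lem2}: the row sums $S_{il}$ control exactly how the off-diagonal entries propagate, since the correction $M_{mn} - d_{j_0}M_{j_0,n}$ with $M_{j_0,n}$ being a row of the partial product is governed by the same recurrence $S_{i,n+1} = S_{i,n} + d_{i,n+1}$ that appears in the proof of Lemma~\ref{lem2}. Specifically, I would verify that after multiplying by $\hat D_{j_0}$, entry $(i,j)$ with $i > j = j_0$ becomes $-d_{j_0}\bigl[\prod_{k=j_0+1}^{i-1}(1-d_k)\bigr]d_i$ — matching $d_{ij}$ — by recognizing the product $\prod_{k=j_0+1}^{i-1}(1-d_k)$ as $-S_{i,j_0}/d_i$ (up to the indices already present), and the entries with $j_0 < j < i$ are left unchanged because those columns of $\hat D_{j_0}$ are columns of the identity. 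An alternative, perhaps slicker, route is to skip the explicit partial-product induction entirely and instead verify directly that the claimed matrix $D$ with entries $d_{ij}$ satisfies $\hat D_{j_0}^{-1} D = D'$ for each $j_0$, peeling factors off one at a time; $\hat D_{j_0}^{-1}$ has the simple form with $1/d_{j_0}$ on the diagonal and $+1$ (not $d_{j_0}$ times $-1/d_{j_0}$... one must be careful) below it in column $j_0$, and the check reduces to the same telescoping identity.

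The main obstacle I anticipate is purely notational: keeping straight which indices $k$ appear in the partial product versus the full product, and making sure the empty-product convention ($\prod_{k=j+1}^{i-1} = 1$ when $i = j+1$) is handled consistently so that the $i = j+1$ case of $d_{ij} = -d_jd_i$ drops out of the general formula. The linear algebra itself is shallow — each $\hat D_{j_0}$ is an elementary-type matrix, and left-multiplication is a single row operation — so no genuine difficulty lies there; the work is in presenting the induction so that the telescoping $(1-d_k)$ factors assemble transparently, which is exactly what Lemma~\ref{lem2} was set up to deliver.
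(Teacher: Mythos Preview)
Your induction has a directional mix-up that breaks it. In $D = \hat D_{s+1}\hat D_s\cdots\hat D_1$ the smaller-index factors sit on the \emph{right}. Your formula $D^{(m)} = \hat D_{s+1}\cdots\hat D_{s+2-m}$ (the leftmost $m$ factors) matches the paper's partial product, and for that object your claims ``$j_0 = s+1-m$ is smaller than every index in $D^{(m)}$'' and ``row $j_0$ of $D^{(m)}$ is $e_{j_0}^T$'' are correct. But then the next factor $\hat D_{j_0}$ must be appended on the \emph{right}: $D^{(m+1)} = D^{(m)}\hat D_{j_0}$. Left-multiplying, as you propose, gives $\hat D_{j_0}\hat D_{s+1}\cdots\hat D_{s+2-m}$, which is not a partial product of $D$ at all. (Your stated base case $D^{(1)} = \hat D_1$ already contradicts your own formula, which gives $D^{(1)} = \hat D_{s+1}$.) Concretely, your left-multiplication produces column $j_0$ equal to $(0,\ldots,0,d_{j_0},-d_{j_0},\ldots,-d_{j_0})^T$---simply the column of $\hat D_{j_0}$---and leaves every other column untouched, so the $(1-d_k)$ telescoping factors never appear.

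The paper right-multiplies, so only column $j_0$ changes, and for $i>j_0$ the new entry is $-d_{j_0}\sum_{l\geq j_0+1}(\mathcal D_{j_0+1})_{il} = -d_{j_0}\sum_{l\geq j_0+1}d_{il}$. \emph{This} sum is where Lemma~\ref{lem2} enters: it equals $-S_{i,j_0} = \bigl[\prod_{k=j_0+1}^{i-1}(1-d_k)\bigr]d_i$, and the product in Eq.~(\ref{eq_z_ij}) drops out. If you genuinely want to build from $\hat D_1$ upward by left-multiplication (a legitimate alternative), then $j_0 = m+1$ is \emph{larger} than every index already present, row $j_0$ of the partial product is not $e_{j_0}^T$, and the correct inductive hypothesis is that rows $1,\ldots,m$ already equal those of $D$ while each lower row carries the column-sum quantity $V_{m+1,j}$ of Lemma~\ref{lem3} in columns $j\leq m$. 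Either direction works once fixed; as written, your argument conflates the two and the key step fails.
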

\begin{proof}
Let a partial product be represented as
\begin{equation}
{\cal D}_n
\equiv\hat D_{s+1}\hat D_{s}\dots\hat D_n
\equiv\prod_{i=s+1}^{n}\hat D_{i},
\end{equation}
where the index takes the values $i=s+1,s,\dots,n$, in that order. Obviously, $D={\cal D}_1$.

When multiplication is carried from left to right, each matrix $\hat D_i$ can only modify the $i$-th column of the partial result matrix on the left. The partial product ${\cal D}_n$ must have the following form: the block with the column index $j\geqslant n$ is filled with elements $d_{ij}$, diagonal elements from $1$ up to $n-1$ are 1 and the rest are 0. In other words
\begin{equation}
({\cal D}_n)_{ij}\equiv\left\{
\begin{array}{rl}
\label{eq_calDn}
d_{ij}, & j\geqslant n,\\
1\phantom{_{ij}}, & i=j<n,\\
0\phantom{_{ij}}, & \mbox{otherwise}.
\end{array}
\right.
\end{equation}
This will be proved by induction. For $n=s+1$ this holds, the only filled block is the last column with the only non-zero element at the position $(s+1,s+1)$ which is $d_{s+1,s+1}=d_{s+1}=1$. Now it is assumed that the form holds for $n=m$ and the next $n=m-1$ is considered. Then
$$
{\cal D}_{m-1}={\cal D}_m\hat D_{m-1}.
$$
Because of the special structure of $\hat D_{m-1}$ the matrices ${\cal D}_{m-1}$ and ${\cal D}_m$ only differ in the $(m-1)$-st column. In that column, all elements in rows above $m-1$ are 0. The diagonal element $(m-1,m-1)$ is the product of a row of zeros except of 1 in the position $m-1$ and the $(m-1)$-st column of $\hat D_{m-1}$ and is thus equal to $(\hat D_{m-1})_{m-1,m-1}=d_{m-1}$. For the remaining rows $i\geqslant m$
$$
({\cal D}_{m-1})_{i,m-1}=\sum_{l=1}^{s+1}({\cal D}_m)_{il}(\hat D_{m-1})_{l,m-1}=
\sum_{l=m}^{s+1}({\cal D}_m)_{il}(-d_{m-1})=-d_{m-1}\sum_{l=m}^{s+1}d_{il}.
$$
The lower bound was raised to $l=m$ since for smaller $l$ the ${\cal D}_m$ elements are zero in those rows and the explicit form of the $(m-1)$-st column of $\hat D_{m-1}$ was also used. Next,
$$
\sum_{l=m}^{s+1}d_{il}=\sum_{l=1}^{s+1}d_{il}-\sum_{l=1}^{m-1}d_{il}=0-S_{i,m-1}.
$$
Application of Lemma~\ref{lem2}, Eq.~(\ref{eq_Sr_il}) for $i>l$ then gives
$$
({\cal D}_{m-1})_{i,m-1}=-d_{m-1}\left[\prod_{k=(m-1)+1}^{i-1}(1-d_k)\right]d_i=d_{i,m-1}.
$$
This completes the induction, as the matrix ${\cal D}_{m-1}$ has the form~(\ref{eq_calDn}). As this holds all the way down to $n=1$, this completes the proof.
\end{proof}
To visualize the structure of the matrix $D$, the $s=4$ example is shown below:
\begin{equation}
\label{eq_D_s4}
D\equiv\left(
\begin{array}{lllll}
\phantom{-}d_1 & \phantom{-}0 & \phantom{-}0 & \phantom{-}0 & 0 \\
-d_1d_2 & \phantom{-}d_2 & \phantom{-}0 & \phantom{-}0 & 0 \\
-d_1(1-d_2)d_3 & -d_2d_3 & \phantom{-}d_3 & \phantom{-}0 & 0 \\
-d_1(1-d_2)(1-d_3)d_4 & -d_2(1-d_3)d_4 & -d_3d_4 & \phantom{-}d_4 & 0 \\
-d_1(1-d_2)(1-d_3)(1-d_4)d_5 & -d_2(1-d_3)(1-d_4)d_5 & -d_3(1-d_4)d_5  & -d_4d_5 & d_5
\end{array}
\right).
\end{equation}

\begin{lem}
\label{lem3}
For the sum, denoted $V_{lj}$, of quantities $d_{ij}$, defined by Eq.~(\ref{eq_z_ij}), on the first index the following holds
\begin{equation}
\label{eq_Vc_il}
V_{lj}\equiv\sum_{i=l}^{s+1} d_{ij}=\left\{
\begin{array}{rl}
0, & l\leqslant j<s+1,\\
-d_j\left[\prod_{k=j+1}^{l-1}(1-d_k)\right], & l>j,\\
1, & l\leqslant j=s+1.
\end{array}
\right.
\end{equation}
\end{lem}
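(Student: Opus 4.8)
The plan is to reduce the claim to a single telescoping identity and then split into the three regimes of Eq.~(\ref{eq_Vc_il}). Concretely, I would first record the auxiliary identity
\[
\sum_{i=m}^{s+1}\left(\,\prod_{k=m}^{i-1}(1-d_k)\right)d_i=1,\qquad 1\leqslant m\leqslant s+1,
\]
where empty products are read as $1$. This is proved in one line: writing $d_i=1-(1-d_i)$, the $i$-th summand equals $\prod_{k=m}^{i-1}(1-d_k)-\prod_{k=m}^{i}(1-d_k)$, so the sum collapses to $1-\prod_{k=m}^{s+1}(1-d_k)$, and the surviving product vanishes because the factor $1-d_{s+1}=0$ by the convention $d_{s+1}=1$. (Alternatively this identity, or Lemma~\ref{lem3} itself, can be obtained by a downward induction on the lower summation index $l$, exactly parallel to the induction used for Lemma~\ref{lem2}, the inductive step being $V_{nj}=d_{nj}+V_{n+1,j}$ combined with $d_n+(1-d_n)=1$.)

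With this in hand I would evaluate $V_{lj}=\sum_{i=l}^{s+1}d_{ij}$ case by case using the definition~(\ref{eq_z_ij}). If $j=s+1$, then $d_{i,s+1}=0$ for every $i<s+1$, so $V_{l,s+1}=d_{s+1,s+1}=d_{s+1}=1$, which is the third line. If $l\leqslant j<s+1$, the terms with $i<j$ vanish, hence $V_{lj}=\sum_{i=j}^{s+1}d_{ij}=d_j-d_j\sum_{i=j+1}^{s+1}\left(\prod_{k=j+1}^{i-1}(1-d_k)\right)d_i$; applying the auxiliary identity with $m=j+1$ (allowed since $j+1\leqslant s+1$) gives $V_{lj}=d_j-d_j=0$, the first line. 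Finally, if $l>j$, every summand has $i>j$, so $V_{lj}=-d_j\sum_{i=l}^{s+1}\left(\prod_{k=j+1}^{i-1}(1-d_k)\right)d_i$; factoring $\prod_{k=j+1}^{l-1}(1-d_k)$ out of each term and applying the auxiliary identity with $m=l$ leaves $V_{lj}=-d_j\prod_{k=j+1}^{l-1}(1-d_k)$, the second line.

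I do not expect a genuine obstacle here; the content is essentially bookkeeping, and the only points requiring care are: (i) that on the diagonal the entry $d_{jj}=d_j$ carries the opposite sign from the subdiagonal entries $d_{ij}=-d_j(\cdots)d_i$, so it does not join the telescope directly — it is instead cancelled exactly because the remaining $i\geqslant j+1$ part sums to $1$; (ii) the handling of empty products in the various boundary cases; and (iii) the role of the conventions $d_1=d_{s+1}=1$, which are precisely what force the clean values $0$ and $1$ in the first and third cases. If one prefers to stay uniform with the style of the preceding proof, the induction route noted above is a routine substitute for the telescoping argument.
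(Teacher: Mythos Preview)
Your proof is correct and rests on the same mechanism as the paper's: the paper proves Lemma~\ref{lem3} by downward induction on $l$ (from $l=s+1$), using $V_{n-1,j}=d_{n-1,j}+V_{n,j}$ together with $d_{n-1}+(1-d_{n-1})=1$, which is exactly the telescoping you isolate in your auxiliary identity. Your organization---proving the telescoping sum $\sum_{i=m}^{s+1}\big(\prod_{k=m}^{i-1}(1-d_k)\big)d_i=1$ once and then reading off all three cases---is a clean repackaging, and you correctly flag the role of $d_{s+1}=1$ and the empty-product conventions; there is no gap.
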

\begin{proof}
The proof is by induction starting with $s+1$ where $V_{s+1,s+1}=d_{s+1,s+1}=d_{s+1}=1$. For $j<s+1$:
$$
V_{s+1,j}=d_{s+1,j}=-d_j\left[\prod_{k=j+1}^{s}(1-d_k)\right]d_{s+1}=
-d_j\left[\prod_{k=j+1}^{s}(1-d_k)\right]
$$
since $d_{s+1}=1$. Now it is assumed that Eq.~(\ref{eq_Vc_il}) holds for $l=n>j$ and the next $V_{n-1,j}$ is considered:
\begin{eqnarray}
V_{n-1,j}&=&d_{n-1,j}+V_{nj}=-d_j\left[\prod_{k=j+1}^{n-2}(1-d_k)\right]d_{n-1}
-d_j\left[\prod_{k=j+1}^{n-1}(1-d_k)\right]\nonumber\\
&=&-d_j\left[\prod_{k=j+1}^{n-2}(1-d_k)\right]d_{n-1}
-d_j\left[\prod_{k=j+1}^{n-2}(1-d_k)\right](1-d_{n-1})\nonumber\\
&=&-d_j\left[\prod_{k=j+1}^{n-2}(1-d_k)\right].
\end{eqnarray}
For $n=j+1$, $V_{j+1,j}=-d_j$ and thus
$$
V_{jj}=d_j+V_{j+1,j}=0.
$$
For $n<j$ there is no contribution to the sum $V_{nj}$ since $d_{nj}$ are zero in this case.
\end{proof}

One can see that $S_{i,s+1}$ is the sum along the $i$-th row of $D$ and $V_{1,j}$ is the the sum along $j$-th column of $D$. Lemmas \ref{lem2} and \ref{lem3} show that $D$ has the following property: the complete sums along rows are 0 except for the first row where the sum is equal to 1, and the complete sums along columns are 0 except for the last column where the sum is equal to 1:
\begin{eqnarray}
\sum_{j=1}^{s+1}d_{ij}&=&S_{i,s+1}=\delta_{i,1},\label{eq_D_row_sum}\\
\sum_{i=1}^{s+1}d_{ij}&=&V_{1,j}=\delta_{j,s+1}.\label{eq_D_column_sum}
\end{eqnarray}

\begin{lem}
\label{lem4}
The following holds
\begin{equation}
\label{eq_DPQD}
DP=QD.
\end{equation}
\end{lem}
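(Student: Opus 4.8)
The plan is to compute both sides of Eq.~(\ref{eq_DPQD}) entrywise using the explicit form of the $d_{ij}$ established in the previous theorem, together with the row- and column-sum identities \eqref{eq_D_row_sum}--\eqref{eq_D_column_sum}. The matrix $P$ has a single nonzero column, the last one, consisting entirely of ones; the matrix $Q$ has a single nonzero row, the first one, consisting entirely of ones. So $(DP)_{ij}$ vanishes unless $j=s+1$, in which case it equals the $i$-th row sum of $D$, i.e.\ $\sum_{k=1}^{s+1}d_{ik}=S_{i,s+1}=\delta_{i,1}$ by Lemma~\ref{lem2} (equivalently \eqref{eq_D_row_sum}). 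Likewise $(QD)_{ij}$ vanishes unless $i=1$, in which case it equals the $j$-th column sum of $D$, i.e.\ $\sum_{k=1}^{s+1}d_{kj}=V_{1,j}=\delta_{j,s+1}$ by Lemma~\ref{lem3} (equivalently \eqref{eq_D_column_sum}).

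Putting these together, $(DP)_{ij}=\delta_{j,s+1}\,\delta_{i,1}$ and $(QD)_{ij}=\delta_{i,1}\,\delta_{j,s+1}$, so both matrices are the single matrix unit $E_{1,s+1}$ with a $1$ in the top-right corner and zeros elsewhere. Hence $DP=QD$. The only slightly delicate point is bookkeeping for the two exceptional cases: one must check that the ``row sum is $1$'' case of Lemma~\ref{lem2} ($l\geqslant i=1$) and the ``column sum is $1$'' case of Lemma~\ref{lem3} ($l\leqslant j=s+1$) are precisely the corner entry $(1,s+1)$, and that they are mutually consistent there — which they are, since $d_{1,s+1}$ itself is included in both the first row and the last column, and both lemmas are already proven.

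I do not anticipate a genuine obstacle here: once the structure of $P$ and $Q$ is observed, the statement is an immediate corollary of the two summation lemmas, and the computation is a few lines. If anything, the effort goes into stating cleanly why multiplying on the right by $P$ extracts row sums into the last column while multiplying on the left by $Q$ extracts column sums into the first row; this is dual to the earlier remark that left-multiplication by $P$ copies the bottom row everywhere. A short sentence identifying $DP=QD=E_{1,s+1}$ closes the argument.
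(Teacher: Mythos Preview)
Your proposal is correct and follows essentially the same argument as the paper's own proof: both compute $DP$ and $QD$ entrywise, observe that only the last column (respectively first row) survives, identify the surviving entries as the row sums (respectively column sums) of $D$, and invoke Lemmas~\ref{lem2} and~\ref{lem3} via \eqref{eq_D_row_sum}--\eqref{eq_D_column_sum} to conclude that both products equal the single matrix unit with a $1$ in position $(1,s+1)$.
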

\begin{proof}
By construction, the resulting matrix $DP$ has only one non-zero column, the last one, $s+1$. The elements in each row are the row sums of $D$, Eq.~(\ref{eq_D_row_sum}), \textit{i.e.}, the only non-zero sum is the one in the first row and thus $DP$ is a matrix with a single non-zero element: $1$ at the position $(1,s+1)$.

Also, by construction, the matrix $QD$ has only one non-zero row, the first one. Each element is the column sum of $D$, Eq.~(\ref{eq_D_column_sum}) and the only non-zero sum is for the last column $s+1$. Thus, the $QD$ matrix has zero elements everywhere except of the one equal to 1 at the position $(1,s+1)$. Thus, Eq.~(\ref{eq_DPQD}) holds.
\end{proof}

\subsection{Properties of the inverse of the factor $D$}

By inspection, it is clear that for a given matrix $\hat D_i$ its inverse has the same structure (\textit{i.e.}, non-zero elements at the same positions) and is explicitly given by
\begin{equation}
\label{eq_invhatD_i}
(\hat D_i^{-1})_{mn}=\left\{
\begin{array}{rl}
1\phantom{_i}, & m=n\neq i,\\
1/d_i, & m=n=i,\\
1\phantom{_i}, & n=i,\,\, m>n,\\
0\phantom{_i}, & \mbox{otherwise}.
\end{array}
\right.
\end{equation}
A quick way to see this is to imagine how the inverse can be calculated with Gaussian elimination on $\hat D_i$: one will add the row $i$ to all the rows below it and then rescale the row $i$ by $d_i$. The inverse of $D$, denoted $G$, is
\begin{equation}
G\equiv D^{-1}=\prod_{i=1}^{s+1}\hat D^{-1}_i.
\end{equation}
Carrying out the multiplication shows that $G$ has the following structure
\begin{equation}
\label{eq_G_ij}
G_{ij}=
\left\{
\begin{array}{rl}
1/d_i, & i=j,\\
1\phantom{_i}, & i>j,\\
0\phantom{_i}, & \mbox{otherwise}.
\end{array}
\right.
\end{equation}
In other words, $G$ has a lower triangular structure, filled with ones similar to the matrix $L$ introduced earlier, except that the elements on the diagonal are $G_{ii}=1/d_i$.

Let the matrix $N$ be a diagonal matrix defined as
\begin{equation}
\label{eq_N}
N_{ij}=\frac{\delta_{ij}}{d_i}.
\end{equation}
Then $G$ can be represented as
\begin{equation}
G=L-I+N.
\end{equation}
For comparison, for $s=4$ the matrix $G$, the inverse of $D$ given in Eq.~(\ref{eq_D_s4}), is
\begin{equation}
\label{eq_G_s4}
G=\left(
\begin{array}{ccccc}
1/d_1 & 0 & 0 & 0 & 0 \\
1 & 1/d_2 & 0 & 0 & 0 \\
1 & 1 & 1/d_3 & 0 & 0 \\
1 & 1 & 1 & 1/d_4 & 0 \\
1 & 1 & 1  & 1 & 1/d_5
\end{array}
\right).
\end{equation}
\begin{rmk}
The matrices $C$, Eq.~(\ref{eq_C}) and $N$, Eq.~(\ref{eq_N}), completely specify the method, \textit{i.e.}, the augmented Butcher tableau matrix $A$ can be reconstructed from them.
\end{rmk}

\begin{thm}\label{th2} The following holds
\begin{equation}
\label{eq_FCG}
F=[C,G].
\end{equation}
\end{thm}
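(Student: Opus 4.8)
The plan is to reduce the claimed identity to the already-established fact that $F=[C,L]$ (Eq.~(\ref{eq_F_CLLC})) by exploiting the explicit decomposition $G=L-I+N$ derived just above the statement, where $N$ is the diagonal matrix of Eq.~(\ref{eq_N}). Writing out the commutator and using its bilinearity,
\[
[C,G]=[C,\,L-I+N]=[C,L]-[C,I]+[C,N].
\]
The middle term vanishes because the identity commutes with every matrix, $[C,I]=0$. The last term vanishes as well: $C$ is diagonal by Eq.~(\ref{eq_C}) and $N$ is diagonal by Eq.~(\ref{eq_N}), and any two diagonal matrices commute, so $[C,N]=0$. Hence $[C,G]=[C,L]=F$ by Eq.~(\ref{eq_F_CLLC}), which is exactly the assertion of Theorem~\ref{th2}.

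As an independent sanity check one can verify the identity entrywise. Since $C$ is diagonal with $C_{ii}=c_i$, for any matrix $M$ one has $(CM-MC)_{ij}=(c_i-c_j)M_{ij}$. Applying this to $M=G$ and using the explicit form of $G$ in Eq.~(\ref{eq_G_ij}): for $i>j$ the factor $G_{ij}=1$, giving $(CG-GC)_{ij}=c_i-c_j$; for $i=j$ the prefactor $c_i-c_j$ is $0$; and for $i<j$ we have $G_{ij}=0$. In all cases this reproduces $f_{ij}$ as defined in Eq.~(\ref{eq_f_ij}), i.e.\ $(CG-GC)_{ij}=F_{ij}$.

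I do not anticipate a genuine obstacle here; the only thing that needs to be in place is the prior derivation of $G=L-I+N$ (equivalently, the explicit structure~(\ref{eq_G_ij}) of $G=D^{-1}$), which the excerpt has already supplied. The key observation making everything collapse is that $G$ and $L$ differ only by the diagonal matrix $N-I$, and diagonal matrices lie in the commutant of the diagonal matrix $C$; thus passing from $[C,L]$ to $[C,G]$ changes nothing.
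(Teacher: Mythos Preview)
Your proof is correct and matches the paper's argument essentially verbatim: expand $[C,G]$ using $G=L-I+N$, drop the commutators with $I$ and with the diagonal $N$, and reduce to $[C,L]=F$. The entrywise sanity check you add is a nice extra that the paper omits, but the core reasoning is the same.
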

\begin{proof}
Simply put, the diagonal elements of $L$ do not matter in the definition of $F$, Eq,~(\ref{eq_F_CLLC}):
$$
[C,G]=C(L-I+N)-(L-I+N)C=[C,L]=F,
$$
since $C$ and $N$ are both diagonal and thus commute.
\end{proof}

Equation~(\ref{eq_FCG}) allows one to form other identities. First, multiplying by $D$ on the right (recall that $DG=I$):
$$
FD=CGD-GCD=C-GCG^{-1}.
$$
Recalling the factorization (\ref{eq_A_FD}) this means
\begin{equation}
\label{eq_GCG}
GCG^{-1}=C-A.
\end{equation}
Next, multiplying by $D$ both on the left and right
\begin{equation}
DFD=DCGD-DGCD=DC-CD=[D,C].
\end{equation}

\section{$c$-reflection symmetry and  order conditions}
\label{sec_csym}
There are enough tools now to address the questions raised in Sec.\ref{sec_intro}. The situation becomes immediately clear when one examines the $d_i$ parameters of the presented pairs. They are shown in Table~\ref{tab_ds}.

\begin{table}
\centering
\begin{tabular}{r|rr|cc|cc}
$d_i$ & $(4,3)_1$ & $(4,3)_2$ & SOLUTION 1 & SOLUTION 2 &
SOLUTION 3 & SOLUTION 4 \\
\hline
$d_1$ & $1\phantom{)_2}$ & $1\phantom{)_2}$ & $1$ & $1$ & $1$ & $1$ \\
$d_2$  & $9/4\phantom{)_1}$  &  $15/4\phantom{)_2}$ &
$1.927643001997$ & $1.923666744633$ & $1.717889771931$ &
$3.668865415371$ \\
$d_3$  & $9/5\phantom{)_1}$   &  $9/5\phantom{)_2}$ &
$2.195292153589$ & $3.703493152563$ & $3.267577233123$ &
$2.081534437511$ \\
$d_4$  & $15/4\phantom{)_1}$   & $9/4\phantom{)_1}$ &
$3.703493152572$ & $2.195292153593$ & $2.081534437517$ &
$3.267577233125$ \\
$d_5$ & $1\phantom{)_2}$ & $1\phantom{)_2}$ &
$1.923666744634$ & $1.927643001997$ & $3.668865415321$ &
$1.717889771932$ \\
$d_6$ & $-\phantom{)_1}$ & $-\phantom{)_1}$ & $1$ & $1$ & $1$ & $1$ \\
\end{tabular}
\caption{The $d_i$ parameters of the two (4,3) and four (5,4) 2N-storage methods found in Ref.~\cite{CK1994}.
\label{tab_ds}
}
\end{table}

It appears that the pairs of methods are related by a transformation that reverses the order of $d_i$ parameters and replaces the $c_i$ parameters with $1-c_i$ also in the reverse order. In matrix notation this transformation can be expressed with antidiagonal transpose operation whose properties are reviewed next.

\subsection{Properties of antidiagonal transpose}

\begin{defi}
Antidiagonal unit square matrix $T$ of size $(s+1)\times(s+1)$ is defined as
\begin{equation}
T_{ij}=\delta_{i,s+2-j}=\delta_{s+2-i,j}.
\end{equation}
\end{defi}
An example of $T$ for $s=4$ is
\begin{equation}
T=\left(
\begin{array}{lllll}
0 & 0 & 0 & 0 & 1 \\
0 & 0 & 0 & 1 & 0 \\
0 & 0 & 1 & 0 & 0 \\
0 & 1 & 0 & 0 & 0 \\
1 & 0 & 0  & 0 & 0
\end{array}
\right).
\end{equation}
$T$ is its own inverse, $TT=I$, and transpose, $T^T=T$.

\begin{defi}
The antidiagonal transpose of a square matrix $M$ of size $(s+1)\times(s+1)$ is denoted as $M^\tau$ and its action on the matrix elements is defined as
\begin{equation}
(M^\tau)_{ij}=M_{s+2-j,s+2-i}.
\end{equation}
\end{defi}

\begin{lem}
\label{lem5}
The antidiagonal transpose can be expressed through regular transpose and the antidiagonal unit matrix $T$ as
\begin{equation}
M^\tau=TM^TT.
\end{equation}
\end{lem}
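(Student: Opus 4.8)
The plan is to verify the identity $M^\tau = TM^TT$ directly at the level of matrix elements, which is the most transparent route given that both $T$ and the antidiagonal transpose are defined componentwise in the excerpt. First I would write out $(TM^TT)_{ij}$ by expanding the two matrix products: $(TM^TT)_{ij} = \sum_{k,l} T_{ik} (M^T)_{kl} T_{lj}$. Using $T_{ik} = \delta_{k, s+2-i}$ and $T_{lj} = \delta_{l, s+2-j}$, both sums collapse, leaving $(TM^TT)_{ij} = (M^T)_{s+2-i,\, s+2-j}$. Then applying the definition of the ordinary transpose, $(M^T)_{ab} = M_{ba}$, gives $(TM^TT)_{ij} = M_{s+2-j,\, s+2-i}$, which is exactly the defining formula for $(M^\tau)_{ij}$. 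Hence the two matrices agree entry by entry.

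There is essentially no obstacle here; the only thing to be careful about is bookkeeping with the index reflection $k \mapsto s+2-k$ and making sure the Kronecker deltas are applied in the right order (one kills the row index of $M^T$, the other the column index). An equivalent, slightly slicker phrasing avoids indices altogether: since $T$ is a permutation matrix realizing the reversal permutation $\sigma(k) = s+2-k$, conjugation $M \mapsto T M T$ (using $T = T^{-1}$) reverses both row and column orderings, and composing that with the ordinary transpose $M \mapsto M^T$ (which swaps rows and columns) yields precisely the reflection across the antidiagonal. I would likely present the index computation as the primary proof since it is self-contained and matches the componentwise definitions already in play, and perhaps remark on the permutation-matrix interpretation.

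\begin{proof}
Using $T_{ik}=\delta_{k,s+2-i}$, compute
\begin{equation}
(TM^TT)_{ij}=\sum_{k=1}^{s+1}\sum_{l=1}^{s+1} T_{ik}(M^T)_{kl}T_{lj}
=\sum_{k=1}^{s+1}\sum_{l=1}^{s+1}\delta_{k,s+2-i}(M^T)_{kl}\,\delta_{l,s+2-j}
=(M^T)_{s+2-i,\,s+2-j}.
\end{equation}
By the definition of the ordinary transpose this equals $M_{s+2-j,\,s+2-i}$, which is precisely $(M^\tau)_{ij}$. Since the equality holds for all $i,j$, we conclude $M^\tau=TM^TT$.
\end{proof}
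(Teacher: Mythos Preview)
Your proof is correct and is essentially identical to the paper's own proof: both expand $(TM^TT)_{ij}$ via Kronecker deltas, collapse the sums, and identify the result with $M_{s+2-j,\,s+2-i}=(M^\tau)_{ij}$. The only cosmetic difference is that the paper applies $(M^T)_{kl}=M_{lk}$ inside the sum rather than at the end.
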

\begin{proof}
Using the definitions
$$
(M^\tau)_{ij}=(TM^TT)_{ij}=\sum_{k=1}^{s+1}\sum_{l=1}^{s+1}T_{ik}(M^T)_{kl}T_{lj}
=\sum_{k=1}^{s+1}\sum_{l=1}^{s+1}\delta_{s+2-i,k}M_{lk}\delta_{l,s+2-j}=M_{s+2-j,s+2-i}.
$$
\end{proof}
Clearly, $(M^\tau)^\tau=M$.

The antidiagonal transpose of the product of two matrices $A$, $B$ is
$$
(AB)^\tau=T(AB)^TT=TB^TA^TT=TB^TTTA^TT=B^\tau A^\tau,
$$
and it follows that $(A^{-1})^\tau=(A^\tau)^{-1}$.
For some of the matrices defined previously the following relations hold
$$
L^\tau=L,
$$
$$
P^\tau=TP^TT=Q.
$$

\subsection{$c$-reflection transformation}

\begin{defi}
Given an $s$-stage 2N-storage Runge-Kutta method specified by the diagonal matrices $C$ and $N$, a $c$-reflected method is defined as the one with the parameters
\begin{eqnarray}
\tilde c_i&=&1-c_{s+2-i},\label{eq_ct}\\
\tilde d_i&=&d_{s+2-i}.\label{eq_dt}
\end{eqnarray}
In matrix form the same transformations are expressed as
\begin{eqnarray}
\tilde C&=&I-C^\tau,\label{eq_ctau}\\
\tilde N&=&N^\tau.
\end{eqnarray}
\end{defi}
\begin{defi}
If $\tilde C=C$ and $\tilde N=N$ (and, as a consequence, $\tilde A=A$), the method is called self-$c$-reflected.
\end{defi}

The tilde $\tilde{}$ sign is used to denote the quantities related to the $c$-reflected method. The suggested name, $c$-reflected, originates from the fact that the nodes $c_i$ are reflected around the midpoint $c=1/2$ of the natural range $[0,1]$.
\begin{rmk}
There are Runge-Kutta methods in the literature of different type, called adjoint~\cite{HairerBook1,ButcherBook}, where the nodes $c_i$ of the adjoint method are related to the nodes of the original one in the same way as with the $c$-reflected transformation introduced here, Eq.~(\ref{eq_ct}). However, a $c$-reflected 2N-storage method is different from adjoint methods. For that reason, the suggested name $c$-reflected should avoid any confusion with the adjoint methods that are not discussed here.
\end{rmk}

The matrices $\tilde C$ and $\tilde N$ completely specify the augmented Butcher tableau $\tilde A$, and it remains to prove that $\tilde A$ indeed corresponds to a valid 2N-storage Runge-Kutta method. The fact that $\tilde A$ corresponds to a 2N-storage method is by construction: converting $\tilde c_i$ and $\tilde d_i$ to $\tilde a_{ij}$ will lead to the form (\ref{eq_a_bc_rec}), which means that $\tilde A$ corresponds to a 2N-storage method. However, its order is undetermined at this point.

One can check that the corresponding $\tilde F$ and $\tilde D$ matrices are, in fact, antidiagonal transposes of the original $F$ and $D$. First,
$$
\tilde G=L-I+N^\tau=L^\tau-I^\tau+N^\tau=G^\tau.
$$
Next, the following is taken as a \textit{definition} of $\tilde D$
$$
\tilde D\equiv(\tilde G)^{-1}=(G^\tau)^{-1}=(G^{-1})^\tau=D^\tau.
$$
And,
\begin{eqnarray}
\tilde F&\equiv&[\tilde C,L]=(I-TCT)L-L(I-TCT)=LTCT-TCTL\nonumber\\
&=&T(TLTC-CTLT)T=T(C^TT^TL^TT^T-T^TL^TT^TC^T)^TT\nonumber\\
&=&(CL^\tau-L^\tau C)^\tau=(CL-L C)^\tau=[C,L]^\tau=F^\tau.\nonumber
\end{eqnarray}
The augmented Butcher tableau of the $c$-reflected method is then
$$
\tilde A=\tilde F\tilde D=F^\tau D^\tau=(DF)^\tau.
$$
It is convenient to relate directly $A$ and $\tilde A$ with the matrix $G$ that has a particularly simple structure~(\ref{eq_G_ij})
\begin{equation}
\label{eq_tA_GAG}
\tilde A=(DF)^\tau=(DFDG)^\tau=(G^{-1}AG)^\tau=T(G^{-1}AG)^TT.
\end{equation}
It follows by construction, but can also be easily checked from the properties of $F^\tau$ and $D^\tau$ that the $c$-reflected method is self-consistent, \textit{i.e.},
\begin{equation}
\tilde c_i=\sum_{j=1}^{s+1}\tilde a_{ij}.
\end{equation}

\subsection{Order conditions for a $c$-reflected method}

There are several useful identities which will be used in the proofs. They are true both for the direct and the $c$-reflected method, as they rely on those methods having exactly the same structure. First, consider the following identity:
\begin{equation}
PC=c_{s+1}P=P.
\end{equation}
This is true because multiplication by $P$ on the left replicates the last row of the matrix on the right. For $C$ it contains $c_{s+1}=1$ in the last column. This gives the \textit{first simplifying rule}: if $C$ follows $P$ under trace, it can be dropped. Next, consider
${\rm Tr}[PMC]$,
where $M$ is some matrix. Recall that $P$ was introduced merely to be able to use the trace operation, and this expression is equivalent to summing over the bottom row, thus
\begin{equation}
\label{eq_PMC_PMA}
{\rm Tr}[PMC]=\sum_{j=1}^{s+1}\sum_{k=1}^{s+1}M_{s+1,k}C_{kj}=
\sum_{j=1}^{s+1}M_{s+1,j}c_{j}=\sum_{j=1}^{s+1}\sum_{l=1}^{s+1}M_{s+1,j}a_{jl}
={\rm Tr}[PMA].
\end{equation}
The \textit{second simplifying rule} is then the following: a $C$ in the final position in the product under trace can be exchanged for $A$ and vice versa.

The main result of this work is the following
\begin{thm}
\label{th_main}
Given an $s$-stage 2N-storage Runge-Kutta method with the augmented Butcher tableau $A$ that can be represented by matrices $C$ and $N$ (\textit{i.e.}, all $d_i$ are finite and adjacent $c_i$ are distinct) and that satisfies the order conditions for global order of accuracy $p\leqslant4$, Eqs.~(\ref{eq_oc_RK_b})--(\ref{eq_oc_RK_ba2c}),
listed also in an equivalent matrix form, Eqs.~(\ref{eq_oc_RK_b_P})--(\ref{eq_oc_RK_ba2c_P}), the $c$-reflected 2N-storage Runge-Kutta method whose augmented Butcher tableau $\tilde A$ is achieved from $A$ with the transformation given in Eq.~(\ref{eq_tA_GAG}), also satisfies the order conditions for the same order of global accuracy $p$.
\end{thm}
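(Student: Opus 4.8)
The plan is to express every order condition in the matrix-trace form of Eqs.~(\ref{eq_oc_RK_b_P})--(\ref{eq_oc_RK_ba2c_P}) and to show that applying the $c$-reflection transformation $\tilde A = T(G^{-1}AG)^TT$ carries each such trace to an equivalent one that is \emph{already} satisfied by the original method. The key structural input is Eq.~(\ref{eq_GCG}), $G^{-1}AG = C-A$ (equivalently $GCG^{-1}=C-A$), together with Lemma~\ref{lem4}, $DP=QD$, and the two simplifying rules ($C$ after $P$ under trace can be dropped; a trailing $C$ can be swapped for $A$). I would first record the consequence that for the $c$-reflected method the role of $A$ is played by the antidiagonal transpose of $C-A$, so $\tilde A = (C-A)^\tau$ acting through the identities $(MN)^\tau = N^\tau M^\tau$, $C^\tau = I-\tilde C$, and $\mathrm{Tr}[M^\tau] = \mathrm{Tr}[M]$.

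Next I would set up the generic computation. A typical order condition at order $\le 4$ reads $\mathrm{Tr}[P\,X_1 X_2\cdots X_m\, C] = \text{const}$ (after using the second simplifying rule to force the final factor to be $C$), where each $X_k$ is either $A$ or $C$; the $c$-reflected version is $\mathrm{Tr}[\tilde P\,\tilde X_1\cdots\tilde X_m\,\tilde C]$. Using $P^\tau = Q$, $\tilde C = I - C^\tau$, $\tilde A = (C-A)^\tau$, and the fact that antidiagonal transpose reverses products and commutes with trace, this equals $\mathrm{Tr}[(C-A)^{\epsilon_m}\cdots(C-A)^{\epsilon_1}\,Q]$-type expressions where each factor coming from an $A$ becomes $C-A$ and each factor coming from a $C$ becomes $C$ (using $I-(I-C^\tau)=C^\tau$ and $Q$ placement). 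So the combinatorial heart is: expand every $C-A$ by bilinearity of the trace into a sum of $2^{(\#A)}$ monomials in $A$ and $C$, and verify that after using the first and second simplifying rules and the leading-$P$/trailing-$C$ normalizations, the resulting linear combination of lower-and-equal-order traces evaluates, via Eqs.~(\ref{eq_oc_RK_b_P})--(\ref{eq_oc_RK_ba2c_P}) plus $\mathrm{Tr}[P]=\mathrm{Tr}[PC^k]=1$, to exactly the required constant for the $c$-reflected tree.

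Concretely I would proceed condition by condition, since there are only eight of them. For the bushy trees $\mathrm{Tr}[PAC^k]=1/(k+1)$ the computation collapses almost immediately: $C-A$ appears once, and $\mathrm{Tr}[PA] = 1$, $\mathrm{Tr}[PAC^j]=1/(j+1)$ supply all needed pieces, reproducing the binomial-type identity $\sum_j \binom{k}{j}(-1)^j/(j+1) = 1/(k+1)$ that encodes $\int_0^1 (1-c)^k\,dc = 1/(k+1)$. The tall trees $\mathrm{Tr}[PA^{k}C]=1/k!$ and the mixed trees $\mathrm{Tr}[PACAC]$, $\mathrm{Tr}[PAACC]$, $\mathrm{Tr}[PAAC]$ require expanding two or three copies of $C-A$; here I would keep the factors ordered and repeatedly apply $G^{-1}AG=C-A$ only once at the outermost level, then track monomials. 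The main obstacle I anticipate is the order-four mixed conditions, where the expansion produces several monomials of the form $\mathrm{Tr}[P A C A C]$, $\mathrm{Tr}[P A A C C]$, $\mathrm{Tr}[P A C C]$, $\mathrm{Tr}[P A A C]$, etc., and one must check that the signed sum matches $1/8$ or $1/12$; this is a finite but slightly delicate bookkeeping exercise, and the cleanest route may be to interpret each trace as an iterated integral over the simplex (the order condition is $\int \prod$ of elementary weights) and observe that $c\mapsto 1-c$ is an orientation-reversing bijection of the simplex that permutes the stages in reverse, so the integral representing the $c$-reflected tree equals the integral representing the original tree. I would present the algebraic verification as the primary proof and mention the integral viewpoint as the conceptual reason it works.
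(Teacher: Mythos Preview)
Your overall strategy matches the paper's: substitute the definitions of $\tilde A$ and $\tilde C$, transpose under the trace, convert $P$ to $Q$ via Lemma~\ref{lem4}, and reduce each $c$-reflected condition to a linear combination of the original traces. However, there is a genuine error at the heart of the proposal. You quote Eq.~(\ref{eq_GCG}) as ``$G^{-1}AG=C-A$,'' but that equation reads $GCG^{-1}=C-A$, and the two are \emph{not} equivalent: from $GCG^{-1}=C-A$ one gets $G^{-1}AG=G^{-1}CG-C$, not $C-A$. Consequently your assertion $\tilde A=(C-A)^\tau$ is false (indeed $(C-A)^\tau$ is not even strictly lower triangular, since $C^\tau=I-\tilde C$ contributes a nonzero diagonal and $A^\tau$ is strictly upper triangular), and your substitution rule ``each $A$ becomes $C-A$, each $C$ becomes $C$'' is inverted relative to what actually occurs.

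The correct mechanism, as executed in the paper, is this: after inserting $\tilde A=T(G^{-1}AG)^TT$ and $\tilde C=I-TCT$, transposing under the trace, and cycling, adjacent $\tilde A$ factors give $G^{-1}AG\,G^{-1}AG=G^{-1}A^2G$, so the $A$'s survive unchanged; the outermost $G^{-1}\cdots G$ pair is then absorbed by $GQG^{-1}=P$ (Lemma~\ref{lem4}); and each \emph{interior} $\tilde C$ contributes $I-C$ sandwiched as $G(I-C)G^{-1}=I-GCG^{-1}=I-(C-A)$. Thus the working replacement is ``$\tilde A\to A$, interior $\tilde C\to I-C+A$,'' after which the condition-by-condition expansion you outline goes through exactly as in the paper. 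With this correction your binomial heuristic for the bushy trees is realized by expanding powers of $(I-C+A)$, not a single $(C-A)$, and the mixed fourth-order cases reduce to the same finite bookkeeping the paper performs. The simplex/integral remark at the end is a pleasant heuristic but does not by itself prove anything for a fixed discrete scheme.
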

\begin{proof}
It remains to show that Eqs.~(\ref{eq_oc_RK_b_P})--(\ref{eq_oc_RK_ba2c_P}) hold when $A$ and $C$ are replaced by $\tilde A$ and $\tilde C$, respectively.

The proof will follow the same strategy for all relations and will be illustrated in detail for one of the order conditions first, and then the reader will be able to follow the chain of transformations for the other order conditions. A good candidate for illustrating all the steps is a condition that has some variety in positioning of the $A$ and $C$ matrices, for instance,
\begin{equation}
{\rm Tr}[P\tilde A\tilde C\tilde A\tilde C]={\rm Tr}[P\tilde A\tilde C\tilde A\tilde A].
\end{equation}
The second simplifying rule, Eq.~(\ref{eq_PMC_PMA}), is used. It is not necessary, but it shortens the proof. (Preemptively using the second simplifying rule eliminates the need to use the first simplifying rule later.) Then Eqs.~(\ref{eq_ctau}) and (\ref{eq_tA_GAG}) are used for the $c$-reflected matrices:
\begin{equation}
{\rm Tr}[PT(G^{-1}AG)^TT(I-TCT)T(G^{-1}AG)^TTT(G^{-1}AG)^TT].
\end{equation}
Some of the products $TT=I$ can be eliminated, and $C^T=C$ is always implied.
The resulting expression
\begin{equation}
\label{eq_TrP}
{\rm Tr}[PT(G^{-1}AG)^T(I-C)(G^{-1}AG)^T(G^{-1}AG)^TT]
\end{equation}
is then transformed based on the fact that trace of a matrix is the same as of its transpose. Taking the transpose of the expression under the trace in Eq.~(\ref{eq_TrP}) reverses the order, some transposes cancel and $T^T=T$ is used:
\begin{equation}
{\rm Tr}[TG^{-1}AGG^{-1}AG(I-C)G^{-1}AGTP^T].
\end{equation}
The first matrix $T$ is moved under the trace to the last position and is used to convert $TP^TT=P^\tau$ to $Q$. Some of the matrices $G$ cancel:
\begin{equation}
{\rm Tr}[G^{-1}AA(I-GCG^{-1})AGQ].
\end{equation}
The crucial steps are the following: using a) Eq.~(\ref{eq_GCG}) that follows from Theorem~\ref{th2}, and b) Lemma 4, Eq.~(\ref{eq_DPQD}) (which means that $GQG^{-1}=P$). The first $G^{-1}$ is rotated to the end and the expression becomes
\begin{equation}
{\rm Tr}[AA(I-GCG^{-1})AGQG^{-1}]=
{\rm Tr}[AA(I-(C-A))AP].
\end{equation}
Now $P$ can be rotated to the front and the expression split
\begin{equation}
{\rm Tr}[PAAA]-{\rm Tr}[PAACA]+{\rm Tr}[PAAAA].
\end{equation}
To bring the expressions to the known forms, the second simplifying rule is used again to replace an $A$ in the final position with $C$. Finally,
\begin{equation}
{\rm Tr}[P\tilde A\tilde C\tilde A\tilde C]={\rm Tr}[PAAC]-{\rm Tr}[PAACC]+{\rm Tr}[PAAAC]=\frac{1}{6}-\frac{1}{12}+\frac{1}{24}=\frac{1}{8}.
\end{equation}
Now essentially the same steps are applied to the other order conditions.
\begin{equation}
{\rm Tr}[P\tilde A]={\rm Tr}[PT(G^{-1}AG)^TT]=
{\rm Tr}[G^{-1}AGQ]={\rm Tr}[AP]={\rm Tr}[PA]=1.
\end{equation}
\begin{eqnarray}
{\rm Tr}[P\tilde A\tilde C\tilde C]&=&{\rm Tr}[P\tilde A\tilde C\tilde A]=
{\rm Tr}[PT(G^{-1}AG)^TT(I-TCT)T(G^{-1}AG)^TT]\nonumber\\
&=&{\rm Tr}[G^{-1}AG(I-C)G^{-1}AGQ]=
{\rm Tr}[A(I-(C-A))AP]\nonumber\\
&=&{\rm Tr}[PAA]-{\rm Tr}[PACA]+{\rm Tr}[PAAA]\nonumber\\
&=&{\rm Tr}[PAC]-{\rm Tr}[PACC]+{\rm Tr}[PAAC]=
\frac{1}{2}-\frac{1}{3}+\frac{1}{6}=\frac{1}{3}.
\end{eqnarray}
\begin{eqnarray}
{\rm Tr}[P\tilde A\tilde C\tilde C\tilde C]&=&{\rm Tr}[P\tilde A\tilde C\tilde C\tilde A]
={\rm Tr}[PT(G^{-1}AG)^TT(I-TCT)(I-TCT)T(G^{-1}AG)^TT]\nonumber\\
&=&{\rm Tr}[PT(G^{-1}AG)^T(I-C)(I-C)(G^{-1}AG)^TT]\nonumber\\
&=&{\rm Tr}[G^{-1}AG(I-2C+CC)G^{-1}AGQ]\nonumber\\
&=&{\rm Tr}[A(I-2GCG^{-1}+GCG^{-1}GCG^{-1})P]\nonumber\\
&=&{\rm Tr}[PA(I-2(C-A)+(C-A)(C-A))A]\nonumber\\
&=&{\rm Tr}[PA(I-2C+2A+CC-AC-CA+AA))A]\nonumber\\
&=&{\rm Tr}[PAA]-2{\rm Tr}[PACA]+2{\rm Tr}[PAAA]+{\rm Tr}[PACCA]-{\rm Tr}[PAACA]\nonumber\\
&-&{\rm Tr}[PACAA]+{\rm Tr}[PAAAA]={\rm Tr}[PAC]-2{\rm Tr}[PACC]
+2{\rm Tr}[PAAC]\nonumber\\
&+&{\rm Tr}[PACCC]-{\rm Tr}[PAACC]-{\rm Tr}[PACAC]+{\rm Tr}[PAAAC]\nonumber\\
&=&\frac{1}{2}-2\cdot \frac{1}{3}+2\cdot \frac{1}{6}+\frac{1}{4}-
\frac{1}{12}-\frac{1}{8}+\frac{1}{24}=\frac{1}{4}.
\end{eqnarray}
\begin{eqnarray}
{\rm Tr}[P\tilde A\tilde A\tilde C\tilde C]&=&{\rm Tr}[P\tilde A\tilde A\tilde C\tilde A]\nonumber\\
&=&{\rm Tr}[PT(G^{-1}AG)^TTT(G^{-1}AG)^TT(I-TCT)T(G^{-1}AG)^TT]\nonumber\\
&=&{\rm Tr}[G^{-1}AG(I-C)G^{-1}AGG^{-1}AGQ]
={\rm Tr}[PA(I-(C-A))AA]\nonumber\\
&=&{\rm Tr}[PAAA]-{\rm Tr}[PACAA]+{\rm Tr}[PAAAA]\nonumber\\
&=&{\rm Tr}[PAAC]-{\rm Tr}[PACAC]+{\rm Tr}[PAAAC]=
\frac{1}{6}-\frac{1}{8}+\frac{1}{24}=\frac{1}{12}.
\end{eqnarray}
The three remaining conditions, Eqs.~(\ref{eq_oc_RK_bc_P}), (\ref{eq_oc_RK_bac_P}) and (\ref{eq_oc_RK_ba2c_P}) can be proved in general, using the property
$$
\tilde A^n=(T(G^{-1}AG)^TT)^n=T(G^{-1}A^nG)^TT.
$$
Then
\begin{eqnarray}
\label{eq_tAnC}
{\rm Tr}[P\tilde A^n\tilde C]&=&{\rm Tr}[P\tilde A^{n+1}]=
{\rm Tr}[PT(G^{-1}A^{n+1}G)^TT]={\rm Tr}[G^{-1}A^{n+1}GQ]\nonumber\\
&=&{\rm Tr}[A^{n+1}P]={\rm Tr}[PA^{n}C].
\end{eqnarray}
\end{proof}
\begin{rmk}
Eq.~(\ref{eq_tAnC}) actually provides more information beyond proving up to the fourth order conditions. The first question is: how large $n$ can be? The structure of the matrix $A$ (and $\tilde A$) is such that each next power has one non-zero diagonal less. $A$ itself has the main diagonal being zero. $A^2$ has the first subdiagonal being zero, and so on. This means that the last power at which meaningful information can still be read off is $n=s-1$. In that case, in the last row there are two non-zero elements $(A^{s-1})_{s+1,1}$ and $(A^{s-1})_{s+1,2}$. Given that $c_1=0$, the last remaining condition is $(A^{s-1})_{s+1,2}c_2$. The proof in Eq.~(\ref{eq_tAnC}) does not rely on any other order conditions, as happens for some other third and fourth order conditions. Thus the quantities ${\rm Tr}[P A^n C]$ for $n<s$ are conserved by the $c$-reflection transformation, even for the powers of $n$ that are larger than required by the order of the method.
\end{rmk}
\begin{rmk}
All but one fifth order conditions for the $c$-reflected method can be proved in the same way as for the fourth order method presented here.
\end{rmk}

The questions posed in Sec.~\ref{sec_intro} can now be answered:
\begin{itemize}
\item[A1:] For general 2N-storage Runge-Kutta methods, where the adjacent $c_i$ parameters are different and the relations in Eq.~(\ref{eq_a_bc_rec}) hold, the augmented Butcher tableau can be factorized and reveals what is suggested to be called the $c$-reflection symmetry of 2N-storage methods: For a given general $s$-stage method of order $p\leqslant4$ (\textit{i.e.}, $c_2\neq c_3$, $c_3\neq c_4$, etc.) with the augmented Butcher tableau $A$, there is a $c$-reflected method with the augmented Butcher tableau $\tilde A$ of the same order of accuracy $p$.
\item[A2:] The transformation that relates the Butcher tableaux $A$ and $\tilde A$ of the pairs is given by Eq.~(\ref{eq_tA_GAG}).
\item[A3:] Ref.~\cite{CK1994} imposed additional conditions, namely, 
${\rm Tr}[PA^3C]=1/24$ for the third-order and ${\rm Tr}[PA^4C]=1/200$ for the fourth-order methods. As such  conditions are conserved by the $c$-reflection transformation, it became likely that the methods found were $c$-reflected transforms of each other.
\end{itemize}

\subsection{2N-storage methods of order five and above}

There is only one 2N-storage method of order five available in the literature, the one developed by Yan in Ref.~\cite{Yan2017}. Numerical evidence based on that method suggests that the following fifth-order condition, that cannot be written in the matrix form similar to Eqs.~(\ref{eq_oc_RK_b_P})--(\ref{eq_oc_RK_ba2c_P}),
\begin{equation}
\sum_{ijk}b_ia_{ij}c_ja_{ik}c_k=\frac{1}{20}
\end{equation}
breaks the $c$-reflection symmetry. This suggests that the $c$-reflection symmetry is a property of 2N-storage Runge-Kutta methods with the order of global accuracy $p\leqslant4$. Curiously, for the fifth-order 2N-storage method of Ref.~\cite{Yan2017} the breaking of the $c$-reflection symmetry is very soft, \textit{i.e.}, for the corresponding $c$-reflected method
\begin{equation}
\sum_{ijk}\tilde b_i\tilde a_{ij}\tilde c_j\tilde a_{ik}\tilde c_k\simeq0.049811.
\end{equation}
All the other order conditions at order five are satisfied. If the difference of $(0.05-0.0498)/0.05\times100\%=0.4\%$ is a specific feature of the method of Ref.~\cite{Yan2017} or a very soft breaking of the $c$-reflection symmetry at order $p=5$ is to be expected, as well as, if there are subclasses of methods at order $p\geqslant 5$ that respect the $c$-reflection symmetry, remains to be understood.

\section{Numerical experiments}
\label{sec_num}
Before presenting the numerical experiments with $c$-reflected schemes, the following consideration needs to be taken into account. While analytically it is convenient to work with the transformation~(\ref{eq_tA_GAG}) relating the augmented Butcher tableau $A$ of the original and $\tilde A$ of the $c$-reflected method, that approach is susceptible to precision loss in floating point arithmetic. The effect is more pronounced for schemes with more stages. This is easy to understand, since, first, construction of the augmented Butcher tableau from the 2N-storage coefficients $A_i$, $B_i$ involves more and more multiplications at higher stages. Second, one needs to construct the matrix $D$ and its inverse $G$. After those multiply the matrix $A$ to produce $\tilde A$, the latter needs to be converted to the new 2N-storage coefficients $\tilde A_i$, $\tilde B_i$ of the $c$-reflected method. In practice, all those steps can be bypassed with the following elegant relations:
\begin{eqnarray}
A_i&=&d_{i-1}\left(\frac{1}{d_i}-1\right),\label{eq_A_d}\\
B_i&=&(c_{i+1}-c_i)\,d_i.\label{eq_B_d}
\end{eqnarray}
These formulas follow from Eq.~(\ref{eq_A_bc}), (\ref{eq_B_bc}) and (\ref{eq_dj}). Thus, in practice, one computes $d_i$ from $B_i$, then applies Eqs.~(\ref{eq_ct}), (\ref{eq_dt}) and then Eqs.~(\ref{eq_A_d}), (\ref{eq_B_d}) to convert $\tilde c_i$, $\tilde d_i$ to $\tilde A_i$, $\tilde B_i$.

The three benchmark problems used for testing the methods, the same ones as in Ref.~\cite{Bazavov2025a}, are the following:
\begin{enumerate}
\item $y'=y\cos(x)$, $y(0)=1$, $0\leqslant x\leqslant20$, with the solution $y_{exact}(x)=\exp(\sin(x))$.
\item $y'=4y\sin^3(x)\cos(x)$, $y(0)=1$, $0\leqslant x\leqslant20$, with the solution $y_{exact}(x)=\exp(\sin^4(x))$.
\item $y'=-y^{3/2}/2$, $y(0)=1$, $0\leqslant x\leqslant20$, with the solution $y_{exact}(x)=1/\sqrt{1+x}$.
\end{enumerate}
The first two are from Ref.~\cite{CK1994} and the third from Ref.~\cite{BBBproc2004}. The discussion is however on a qualitative level, as nothing special is to be expected from the $c$-reflected method in terms of its performance compared to the original one. The methods tested are the (6,4) RK46-NL method of Ref.~\cite{BERLAND20061459}, (7,4) HALERK74 of Ref.~\cite{ALLAMPALLI20093837}, (8,4) TDRKF84 of Ref.~\cite{TOULORGE20122067}, (12,4) NDBRK124 and (14,4) NDBRK144 of Ref.~\cite{NIEGEMANN2012364}. The only metric considered is the error defined as the distance between the numerical and exact solution, as function of the step size $h$:
\begin{equation}
d(h)=|y(x=20,h)-y_{exact}(x=20)|.
\end{equation}
The results for $d(h)$ with the original methods (shown in blue) and the $c$-reflected counterparts (shown in red) are plotted in Fig.~\ref{fig_tests}. 
The tests are deliberately performed in standard double precision to also investigate how different methods react to round-off errors.
First, as expected, the $c$-reflected methods scale properly as fourth-order methods. There are some nonmonotonicities, exhibited both by the original and $c$-reflected methods. Those typically arise when the error changes sign for some value of $h$ and in the vicinity of that point $d(h)$ dives to $-\infty$ on a log-log plot.
In most cases, for the three test problems the original method has smaller error, however, for the third problem the $c$-reflected TDRKF84 and NDBRK124 perform better, and the $c$-reflected NDBRK144 performs better for all three problems\footnote{This may be correlated with the fact that the original (14,4) method of Ref.~\cite{NIEGEMANN2012364} has $b_{12}\approx37.2$ and $b_{13}\approx-39.1$, while for the $c$-reflected (14,4) method $-0.1\leqslant b_i\leqslant 0.23$.}. These observations alone do not mean much, as those methods were developed for specific problems and are probably the best choices for them. The main point of this analysis is the following. The fourth-order methods compared here were found with numerical search, as no analytic solutions for fourth-order 2N-storage methods are available. When solving systems of nonlinear algebraic equations, the solutions found may depend on the initial guesses and root finding or minimization algorithms used. Whole branches of solutions can be missed easily in a multidimensional search. When a solution is found, computing its $c$-reflected counterpart costs virtually zero numerical effort. Thus it is beneficial when developing, or even when using existing 2N-storage methods, to quickly check if perhaps the $c$-reflected method has better properties for the problem at hand.
Importantly, additional constraints applied in the literature are often aimed at manipulating the stability domain and are of the form of Eq.~(\ref{eq_tAnC}) with $n$ as large as acceptable by the method. The $c$-reflected methods conserve those constraints exactly, and thus must have similar stability regions.

\begin{figure}
\centering
\includegraphics[width=0.32\textwidth]{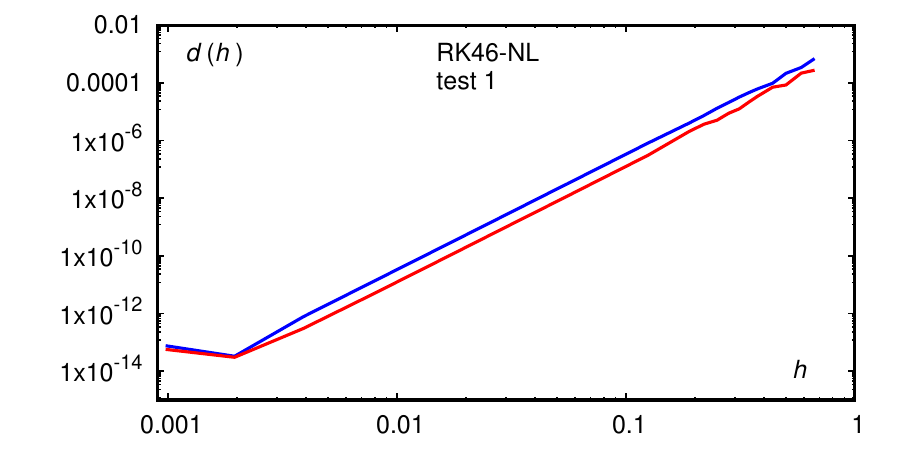}
\hfill
\includegraphics[width=0.32\textwidth]{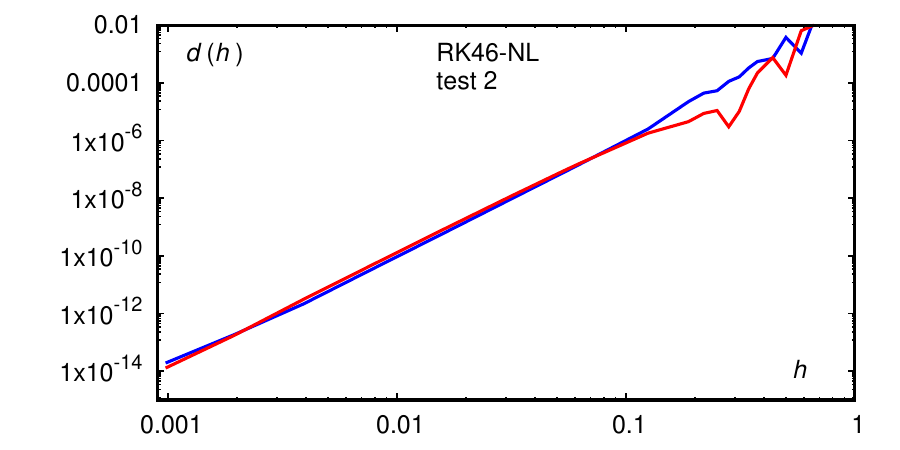}
\hfill
\includegraphics[width=0.32\textwidth]{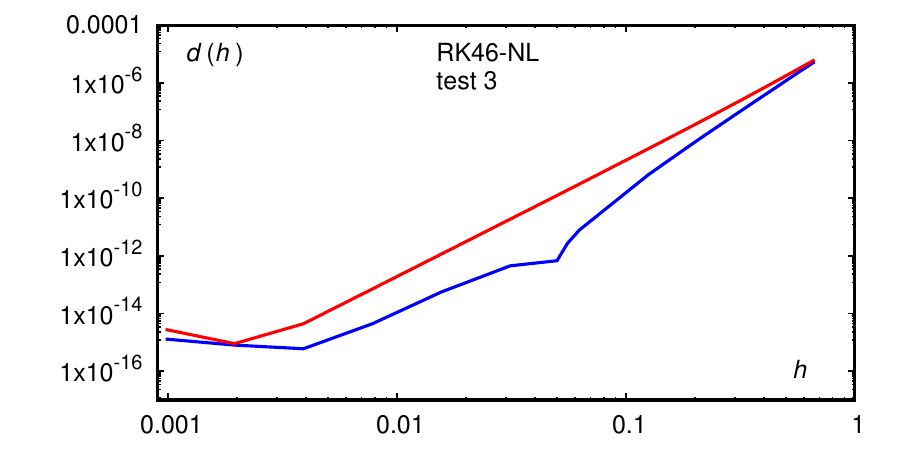}
\includegraphics[width=0.32\textwidth]{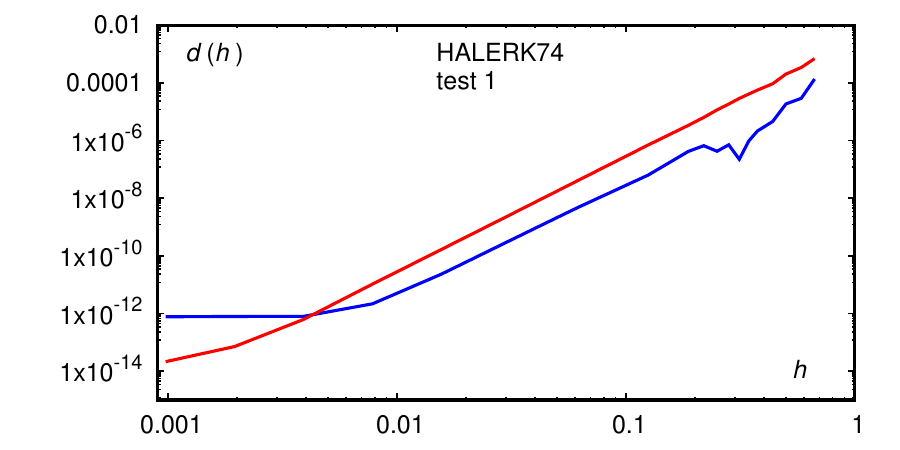}
\hfill
\includegraphics[width=0.32\textwidth]{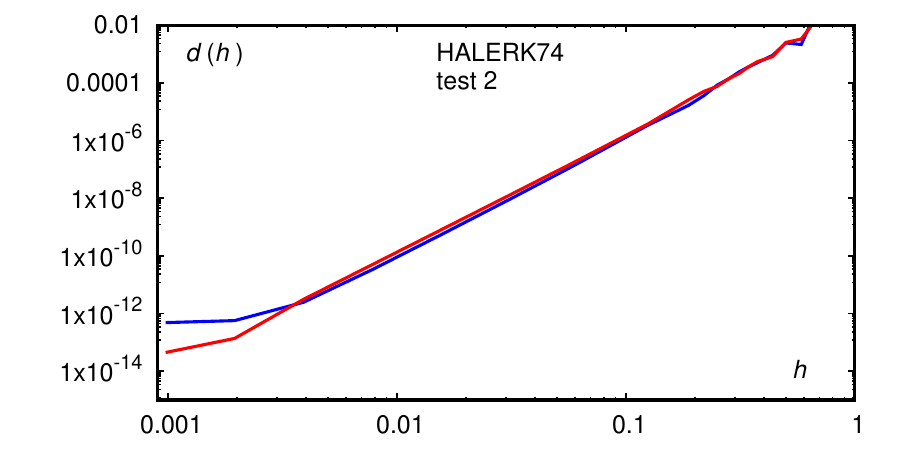}
\hfill
\includegraphics[width=0.32\textwidth]{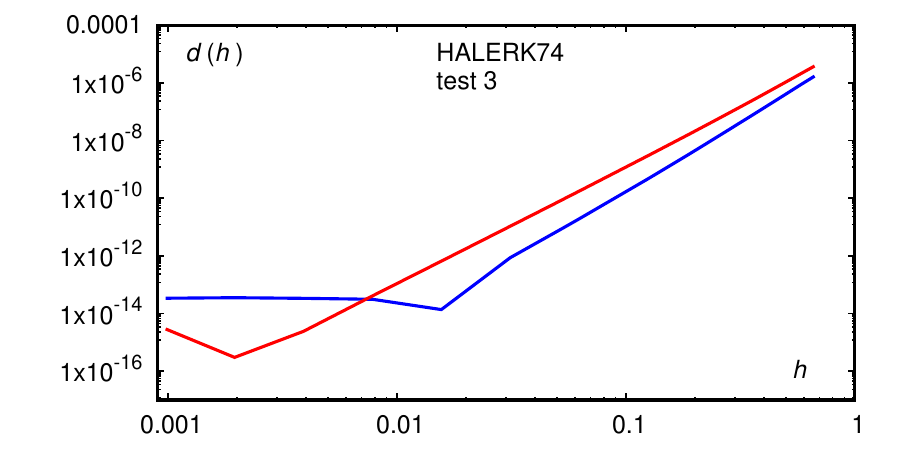}
\includegraphics[width=0.32\textwidth]{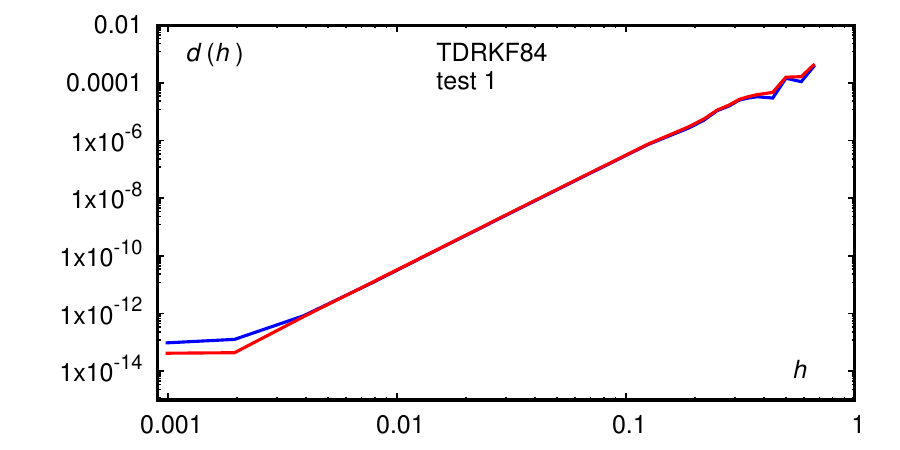}
\hfill
\includegraphics[width=0.32\textwidth]{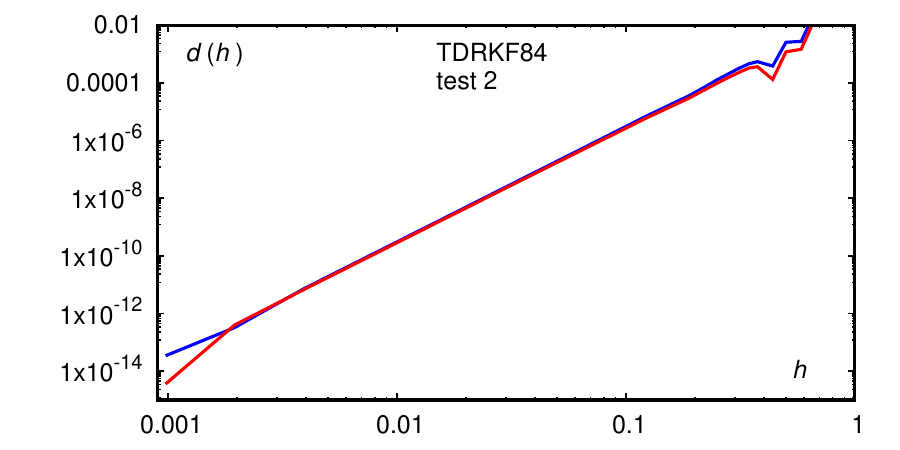}
\hfill
\includegraphics[width=0.32\textwidth]{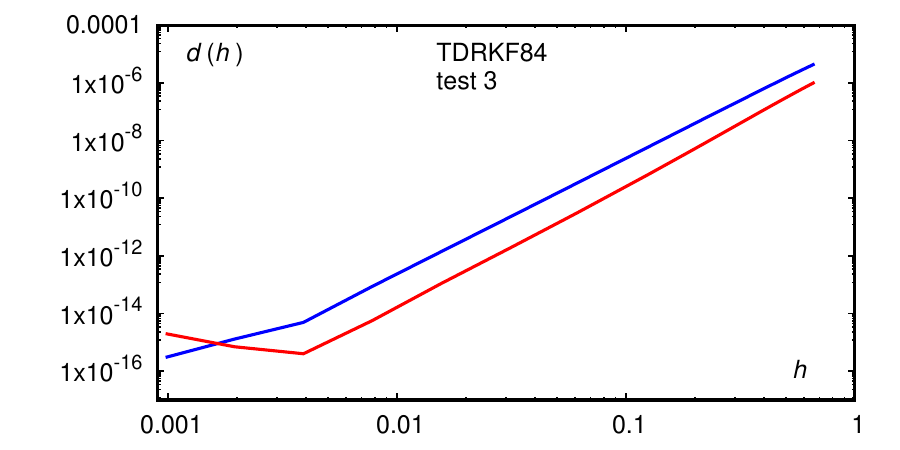}
\includegraphics[width=0.32\textwidth]{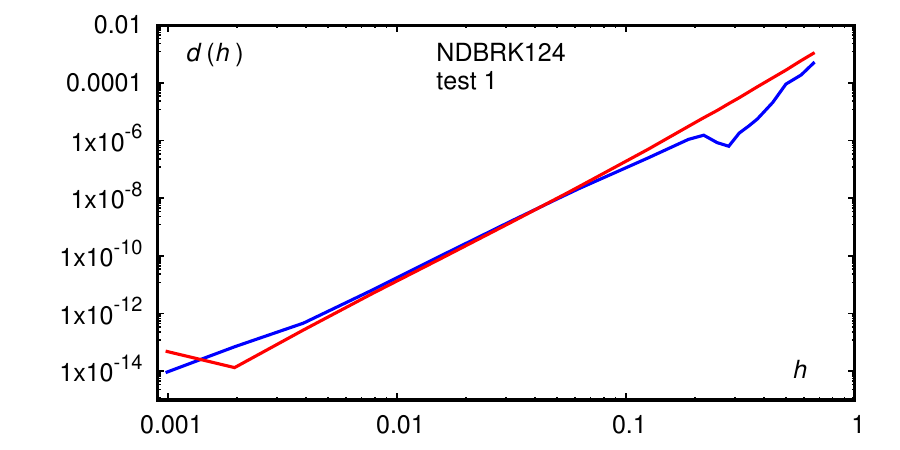}
\hfill
\includegraphics[width=0.32\textwidth]{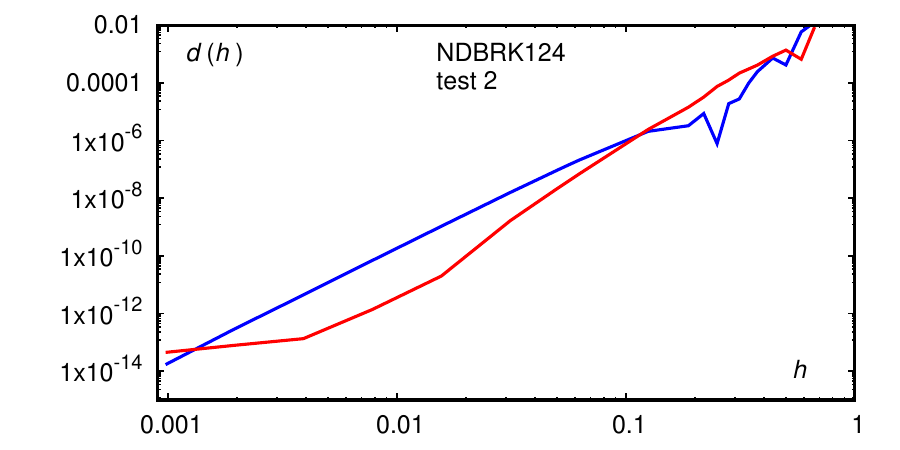}
\hfill
\includegraphics[width=0.32\textwidth]{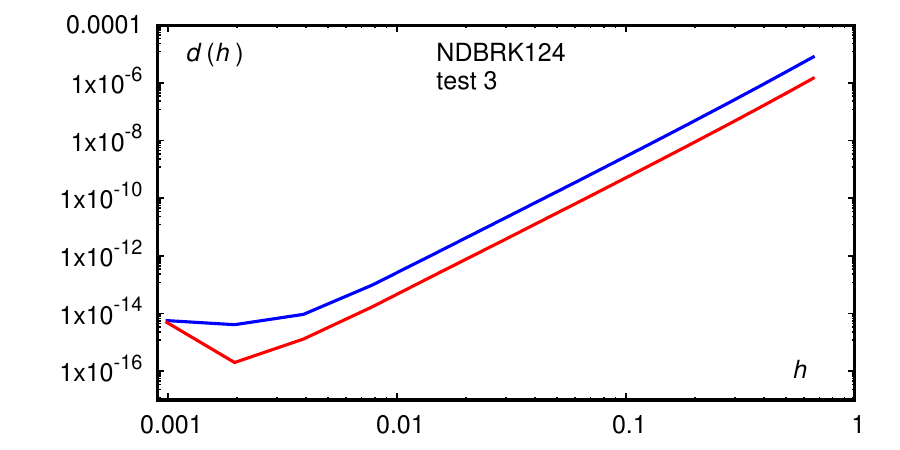}
\includegraphics[width=0.32\textwidth]{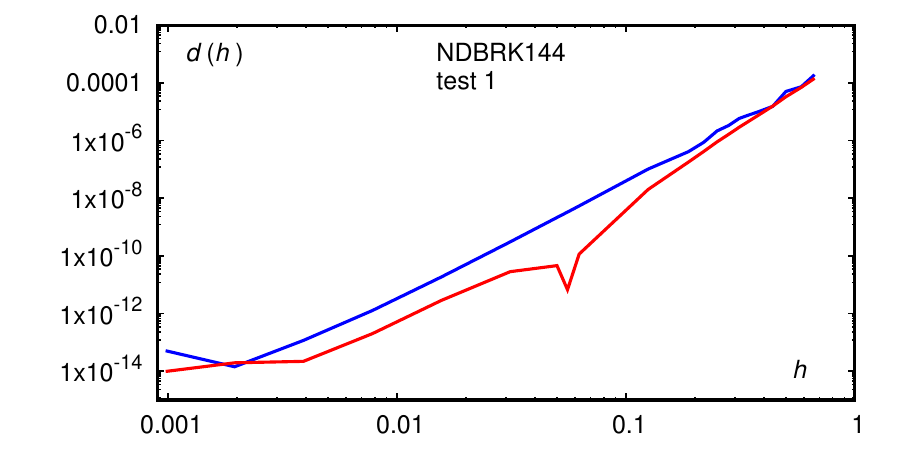}
\hfill
\includegraphics[width=0.32\textwidth]{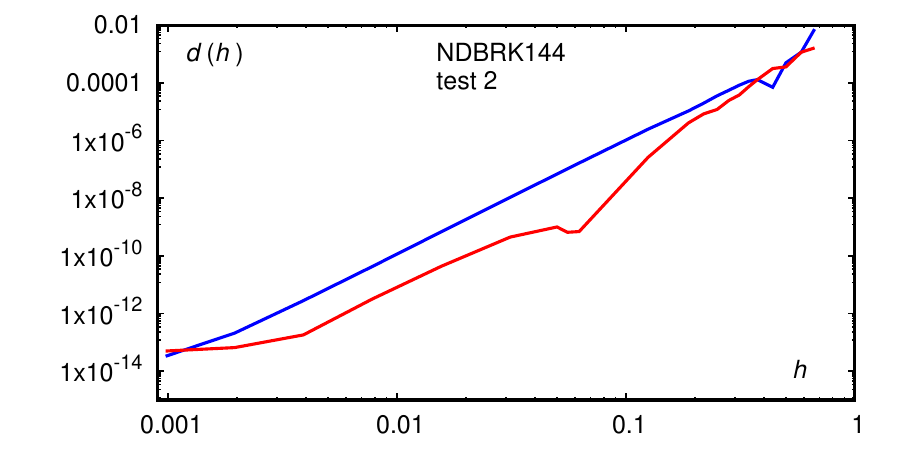}
\hfill
\includegraphics[width=0.32\textwidth]{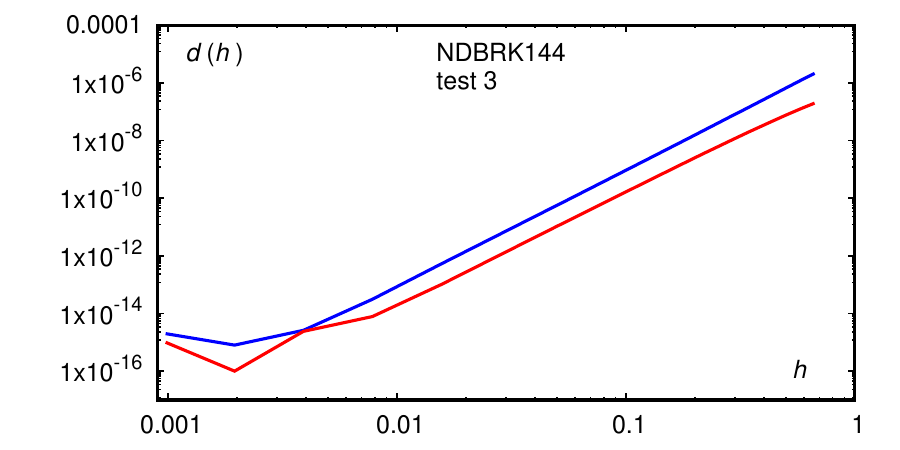}
\caption{The distance from the exact solution $d(h)$ for the three test problems (arranged horizontally) for the five fourth-order 2N-storage methods (arranged vertically), taken from literature and described in the text, shown in blue, and the corresponding $c$-reflected methods, shown in red.\label{fig_tests}
}
\end{figure}

\subsection{Properties of (5,4) methods}

It remains to discuss some features of the family of (5,4) 2N-storage Runge-Kutta methods, first explored by Carpenter and Kennedy in Ref.~\cite{CK1994}, and whose four (5,4) solutions prompted the questions in Sec.~\ref{sec_intro}. Five is the minimal number of stages required for a 2N-storage method to be of fourth order. If no additional constraints are applied, there is one free parameter, say $c_2$, similarly to how the (3,3) methods of Williamson form a one-parameter family. Williamson's classification of (3,3) methods is in the $c_2-c_3$ plane, according to Eq.~(\ref{eq_Wc3}), where most points (except, for instance, $c_2=0$, $c_2=c_3=1/3$) correspond to standard Runge-Kutta methods but only the Williamson curves represent (3,3) 2N-storage methods. If a similar reduction were done for a (5,4) method, \textit{i.e.}, the full system of nonlinear equations were brought to a triangular form, one could envision the final equation involving $c_2$ and $c_5$, where either of them could be taken as a free parameter. (Alternatively, $c_3$, $c_4$ pair could be chosen.) Conceptually, this can be solved with Groebner basis reduction, however, given the double exponential scaling of the reduction algorithms, analytically solving the constraints for (5,4) 2N-storage methods may take a while. Some insight, however, can be gained numerically.
Recently, 2N-storage constraints have been derived explicitly in the $a_{ij}$, $b_i$ variables~\cite{Bazavov2025a}, rather than $A_i$, $B_i$ variables. In that form the constraints are somewhat easier to solve numerically.
Using those relations a number of branches of solutions to the (5,4) 2N-storage conditions, \textit{i.e.}, the standard order conditions and the 2N-storage constraints, were found in the course of this work and they are presented in the $c_2-c_5$ plane in Fig.~\ref{fig_CKcurves}. The curves represent such values of $c_2$ and $c_5$ that correspond to legitimate (5,4) 2N-storage Runge-Kutta methods. Most of the solutions do not have desired properties and therefore are not considered as candidate methods for actual use. However, finding the solutions to the (5,4) constraints, even numerically, sheds some light on the structure of 2N-storage methods at fourth order. From the complexity of the plot, one can deduce that if an analog of Eq.~(\ref{eq_Wc3}) is ever derived for the (5,4) methods, it must be of high order. Several comments about Fig.~\ref{fig_CKcurves} are in order.
\begin{figure}
\centering
\includegraphics[width=0.6\textwidth]{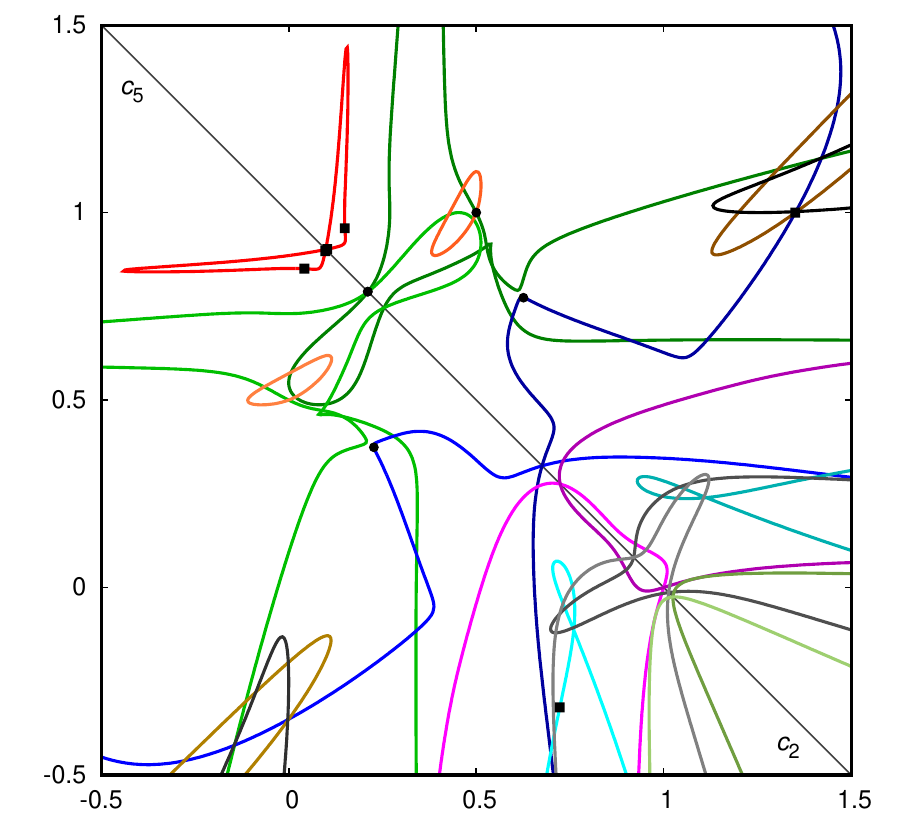}
\caption{
An analog of the Williamson curves for (3,3) methods, Fig.~\ref{fig_Wcurves}, for (5,4) methods, explored first by Carpenter and Kennedy in Ref.~\cite{CK1994}. Pairs of branches that are related by the $c$-reflection symmetry are shown in similar but slightly lighter and slightly darker pairs of colors of the same tone. The four black squares on the red branch are the four numerical solutions found in Ref.~\cite{CK1994} and the other two black squares, on the light cyan and dark blue branches, are the analytic solutions from that reference. The four black circles correspond to the five new schemes discussed in the text.\label{fig_CKcurves}
}
\end{figure}
First, it exhibits the expected $c$-reflection symmetry. Second, the four black squares on the solution branch shown in red (two of them almost fuse into one on the scale of the figure) are the four solutions of Ref.~\cite{CK1994}. Interestingly, if one prefers solutions where the nodes are in increasing order and the weights $b_i$ are not too large and negative, one is limited mostly to the red branch. Also, Ref.~\cite{CK1994} imposed an additional constraint
\begin{equation}
\label{eq_54_fifth}
\sum_{ijkl}b_ia_{ij}a_{jk}a_{kl}c_l=b_5a_{54}a_{43}a_{32}c_2=\frac{1}{200}
\end{equation}
to achieve specific stability properties for using the methods as time propagation schemes in solving partial differential equations. Somewhat luckily the red branch allows for that, however, larger values, \textit{e.g.} $1/120$ are not supported, as it appears that no solutions on the red branch have the left hand side of Eq.~(\ref{eq_54_fifth}) larger than approximately $1/175$. In other words, the window of (5,4) 2N-storage methods with increasing $c_i$ and reasonable $b_i$ values where the stability region can be optimized is quite narrow.
\begin{table}[t]
\centering
\begin{tabular}{c|c|r|c|r}
& \multicolumn{2}{|c|}{(5,4)$_1$} &
\multicolumn{2}{|c}{(5,4)$_2$} \\
\hline
$A_1$  & $0$\vphantom{$-\frac{4(1-1/\sqrt{3})}{2+\sqrt{3}-\sqrt{4/\sqrt{3}-1}}$ }  & $-$ & $0$ & $0$ \\
$A_2$ 
& $-\frac{4(1-1/\sqrt{3})}{2+\sqrt{3}-\sqrt{4/\sqrt{3}-1}}$ & $-0.653$
& $-\frac{4(1-1/\sqrt{3})}{2+\sqrt{3}+\sqrt{4/\sqrt{3}-1}}$ & $-0.347$ \\
$A_3$
& $\frac{9+\sqrt{3}(2+\sqrt{4/\sqrt{3}-1})}{15-\sqrt{3}(8+(5-2\sqrt{3})\sqrt{4/\sqrt{3}-1})}$
& $-7.601$
& $\frac{9+\sqrt{3}(2-\sqrt{4/\sqrt{3}-1})}{15-\sqrt{3}(8-(5-2\sqrt{3})\sqrt{4/\sqrt{3}-1})}$
& $2.503$ \\
$A_4$  & $-1$  & $-$\vphantom{$-\frac{4(1-1/\sqrt{3})}{2+\sqrt{3}-\sqrt{4/\sqrt{3}-1}}$ } & $-1$ & $-$ \\
$A_5$
& $\frac{15-\sqrt{3}(8+(5-2\sqrt{3})\sqrt{4/\sqrt{3}-1})}{9+\sqrt{3}(2+\sqrt{4/\sqrt{3}-1})}$
& $-0.132$
& $\frac{15-\sqrt{3}(8-(5-2\sqrt{3})\sqrt{4/\sqrt{3}-1})}{9+\sqrt{3}(2-\sqrt{4/\sqrt{3}-1})}$ & $0.400$ \\
$B_1$
& $\frac{1}{2}\left(1-\frac{1}{\sqrt{3}}\right)$  & $0.211$\vphantom{$-\frac{4(1-1/\sqrt{3})}{2+\sqrt{3}-\sqrt{4/\sqrt{3}-1}}$ }
& $\frac{1}{2}\left(1-\frac{1}{\sqrt{3}}\right)$  & $0.211$ \\
$B_2$
& $\frac{2+\sqrt{3}-\sqrt{4/\sqrt{3}-1}}{7-\sqrt{3}(2+\sqrt{4/\sqrt{3}-1})}$ & $1.665$
& $\frac{2+\sqrt{3}+\sqrt{4/\sqrt{3}-1}}{7-\sqrt{3}(2-\sqrt{4/\sqrt{3}-1})}$ & $0.884$ \\
$B_3$
& $\frac{1}{4}\left(2-\sqrt{3}+\sqrt{\frac{4}{\sqrt{3}}-1}\right)$ & $0.353$\vphantom{$-\frac{4(1-1/\sqrt{3})}{2+\sqrt{3}-\sqrt{4/\sqrt{3}-1}}$ }
& $\frac{1}{4}\left(2-\sqrt{3}-\sqrt{\frac{4}{\sqrt{3}}-1}\right)$ & $-0.219$ \\
$B_4$
& $\frac{2-\sqrt{3}+\sqrt{4/\sqrt{3}-1}}{1+\sqrt{3}(2+\sqrt{4/\sqrt{3}-1})}$ & $0.219$
& $\frac{2-\sqrt{3}-\sqrt{4/\sqrt{3}-1}}{1+\sqrt{3}(2-\sqrt{4/\sqrt{3}-1})}$ & $-0.353$ \\
$B_5$
& $\frac{1}{8}\left(2+\sqrt{3}-\sqrt{\frac{4}{\sqrt{3}}-1}\right)$ & $0.323$
& $\frac{1}{8}\left(2+\sqrt{3}+\sqrt{\frac{4}{\sqrt{3}}-1}\right)$ & $0.610$ \\
$c_1$  & $0$  & $-$\vphantom{$-\frac{4(1-1/\sqrt{3})}{2+\sqrt{3}-\sqrt{4/\sqrt{3}-1}}$ } & $0$ & $-$ \\
$c_2$
& $\frac{1}{2}\left(1-\frac{1}{\sqrt{3}}\right)$ & $0.211$\vphantom{$-\frac{4(1-1/\sqrt{3})}{2+\sqrt{3}-\sqrt{4/\sqrt{3}-1}}$ }
& $\frac{1}{2}\left(1-\frac{1}{\sqrt{3}}\right)$ & $0.211$ \\
$c_3$
& $\frac{1}{2}\left(1+\frac{1}{\sqrt{3}}\right)$ & $0.789$\vphantom{$-\frac{4(1-1/\sqrt{3})}{2+\sqrt{3}-\sqrt{4/\sqrt{3}-1}}$ }
& $\frac{1}{2}\left(1+\frac{1}{\sqrt{3}}\right)$ & $0.789$\\
$c_4$
& $\frac{1}{2}\left(1-\frac{1}{\sqrt{3}}\right)$ & $0.211$\vphantom{$-\frac{4(1-1/\sqrt{3})}{2+\sqrt{3}-\sqrt{4/\sqrt{3}-1}}$ }
& $\frac{1}{2}\left(1-\frac{1}{\sqrt{3}}\right)$ & $0.211$ \\
$c_5$
& $\frac{1}{2}\left(1+\frac{1}{\sqrt{3}}\right)$ & $0.789$\vphantom{$-\frac{4(1-1/\sqrt{3})}{2+\sqrt{3}-\sqrt{4/\sqrt{3}-1}}$ }
& $\frac{1}{2}\left(1+\frac{1}{\sqrt{3}}\right)$ & $0.789$
\end{tabular}
\caption{A pair of new (5,4) 2N-storage Runge-Kutta methods, labeled (5,4)$_1$ (left) and (5,4)$_2$ (right), related by $c$-reflection that can be expressed in radicals. For convenience, numerical values of the coefficients are also given in the decimal form, where needed.\label{tab_54_set1}
}
\end{table}
Third, the numerical solutions presented in Fig.~\ref{fig_CKcurves} were computed in the following way.
The variables used are the entries of the Butcher tableau $c_i$, $b_i$ and $a_{i>1,j}$, 15 in total.
The equations are the eight standard order conditions and six 2N-storage constraints in the form given in Ref.~\cite{Bazavov2025a}. Any variable can be fixed as a free parameter. The equations are solved with a standard Newton-Raphson algorithm with backtracking line search~\cite{NumRecBook} starting from random initial guesses and several solutions achieved with the Bertini package~\cite{BHSW06}. The precision is 1,000 bit (about 300 significant digits) due to the use of the MPFR~\cite{MPFR} library. The tolerance at which the order conditions are satisfied is $10^{-300}$. Once a solution is found, one of the parameters is perturbed by $\varepsilon=0.0001$ and a nearby solution is found with that parameter fixed at the new value. Once there are two nearby points, a step of size $\varepsilon\in[0.0001,10]$ is made in the direction of the vector equal to the difference between the two points. The procedure is repeated from the last two points. This ``walking'' procedure very quickly (\textit{i.e.}, seconds for a C++ program on a modern laptop) populates most of the branches. However, although the curves in Fig.~\ref{fig_CKcurves} are drawn as continuous, they contain a number of discontinuities: at some values of $c_2$, $c_5$ some of the coefficients diverge. Often those are the weights $b_i$: one runs to $+\infty$ and another to $-\infty$ to maintain the order condition. In that case, one simply restarts the procedure from another point, or chooses a larger $\varepsilon$ to ``jump'' over the singularity. For instance, $c_2=0$ is not allowed for 2N-storage methods of Williamson's type, therefore, all lines crossing the $y$-axis are actually discontinuous at $c_2=0$.
There are more solution branches outside of the region shown in Fig.~\ref{fig_CKcurves}.
As Fig.~\ref{fig_CKcurves} is a projection from a 15-dimensional space, different branches of solutions do not actually cross, as may appear from the figure. Also, one has to keep in mind that these results are numerical, so there is no guarantee that all solution branches have been found. The branches of solutions shown in Fig.~\ref{fig_CKcurves} were crosschecked by bringing the system of equations to a triangular form for several values of the parameters in Singular~\cite{DGPS} and solving them with Wolfram Mathematica~\cite{Mathematica}, as described in \ref{sec_app_checks54}.
\begin{table}[t]
\centering
\begin{tabular}{c|c|r|c|r}
& \multicolumn{2}{|c|}{(5,4)$_3$} &
\multicolumn{2}{|c}{(5,4)$_4$} \\
\hline
$A_1$  & $\phantom{-}0$\vphantom{$\frac{1}{4}+\frac{1}{8}\sqrt{2}-\frac{1}{24}\sqrt{12\sqrt{2}+6}$}  & $-$ & $\phantom{-}0$ & $-$ \\
$A_2$ 
& $-\frac{1}{2}$ & $-$
& $-\frac{1}{2}$ & $-$ \\
$A_3$  & $-1$  & $-$\vphantom{$\frac{1}{4}+\frac{1}{8}\sqrt{2}-\frac{1}{24}\sqrt{12\sqrt{2}+6}$} & $-1$ & $-$ \\
$A_4$  & $-1$  & $-$\vphantom{$\frac{1}{4}+\frac{1}{8}\sqrt{2}-\frac{1}{24}\sqrt{12\sqrt{2}+6}$}& $-1$ & $-$ \\
$A_5$  & $-1$  & $-$\vphantom{$\frac{1}{4}+\frac{1}{8}\sqrt{2}-\frac{1}{24}\sqrt{12\sqrt{2}+6}$} & $-1$ & $-$ \\
$B_1$
& $\frac{1}{4}+\frac{1}{8}\sqrt{2}-\frac{1}{24}\sqrt{12\sqrt{2}+6}$  & $0.227$
& $\frac{1}{4}+\frac{1}{8}\sqrt{2}+\frac{1}{24}\sqrt{12\sqrt{2}+6}$  & $0.626$ \\
$B_2$
& $\frac{1}{2}+\frac{1}{12}\sqrt{12\sqrt{2}+6}+\frac{1}{12}\sqrt{24\sqrt{2}-30}$ & $1.065$
& $\frac{1}{2}-\frac{1}{12}\sqrt{12\sqrt{2}+6}-\frac{1}{12}\sqrt{24\sqrt{2}-30}$ & $-0.065$ \\
$B_3$
& $-\frac{1}{\sqrt{2}}$ & $-0.707$\vphantom{$-\frac{4(1-1/\sqrt{3})}{2+\sqrt{3}-\sqrt{4/\sqrt{3}-1}}$ }
& $-\frac{1}{\sqrt{2}}$ & $-0.707$ \\
$B_4$
& $\frac{1}{2}-\frac{1}{12}\sqrt{12\sqrt{2}+6}-\frac{1}{12}\sqrt{24\sqrt{2}-30}$ & $-0.065$
& $\frac{1}{2}+\frac{1}{12}\sqrt{12\sqrt{2}+6}+\frac{1}{12}\sqrt{24\sqrt{2}-30}$ & $1.065$ \\
$B_5$
& $\frac{1}{2}+\frac{1}{4}\sqrt{2}+\frac{1}{12}\sqrt{12\sqrt{2}+6}$ & $1.253$
& $\frac{1}{2}+\frac{1}{4}\sqrt{2}-\frac{1}{12}\sqrt{12\sqrt{2}+6}$ & $0.454$ \\
$c_1$  & $0$  & $-$\vphantom{$\frac{1}{4}+\frac{1}{8}\sqrt{2}-\frac{1}{24}\sqrt{12\sqrt{2}+6}$} & $0$ & $-$ \\
$c_2$
& $\frac{1}{4}+\frac{1}{8}\sqrt{2}-\frac{1}{24}\sqrt{12\sqrt{2}+6}$  & $0.227$\vphantom{$-\frac{4(1-1/\sqrt{3})}{2+\sqrt{3}-\sqrt{4/\sqrt{3}-1}}$ }
& $\frac{1}{4}+\frac{1}{8}\sqrt{2}+\frac{1}{24}\sqrt{12\sqrt{2}+6}$ & $0.626$ \\
$c_3$
& $\frac{1}{2}+\frac{1}{8}\sqrt{2}+\frac{1}{24}\sqrt{24\sqrt{2}-30}$ & $0.759$\vphantom{$-\frac{4(1-1/\sqrt{3})}{2+\sqrt{3}-\sqrt{4/\sqrt{3}-1}}$ }
& $\frac{1}{2}+\frac{1}{8}\sqrt{2}-\frac{1}{24}\sqrt{24\sqrt{2}-30}$ & $0.594$\\
$c_4$
& $\frac{1}{2}-\frac{1}{8}\sqrt{2}+\frac{1}{24}\sqrt{24\sqrt{2}-30}$ & $0.406$\vphantom{$-\frac{4(1-1/\sqrt{3})}{2+\sqrt{3}-\sqrt{4/\sqrt{3}-1}}$ }
& $\frac{1}{2}-\frac{1}{8}\sqrt{2}-\frac{1}{24}\sqrt{24\sqrt{2}-30}$ & $0.241$ \\
$c_5$
& $\frac{3}{4}-\frac{1}{8}\sqrt{2}-\frac{1}{24}\sqrt{12\sqrt{2}+6}$ & $0.374$\vphantom{$-\frac{4(1-1/\sqrt{3})}{2+\sqrt{3}-\sqrt{4/\sqrt{3}-1}}$ }
& $\frac{3}{4}-\frac{1}{8}\sqrt{2}+\frac{1}{24}\sqrt{12\sqrt{2}+6}$ & $0.773$
\end{tabular}
\caption{A pair of new (5,4) 2N-storage Runge-Kutta methods, labeled (5,4)$_3$ (left) and (5,4)$_4$ (right), related by $c$-reflection that can be expressed in radicals. For convenience, numerical values of the coefficients are also given in the decimal form, where needed.\label{tab_54_set2}
}
\end{table}
\begin{table}[h]
\centering
\parbox{.6\linewidth}{
\centering
\[
\begin{array}{c|ccccc}
0 & & & & \\[1.5mm]
\frac{1}{2} & \phantom{-}\frac{1}{2} & & & & \\[1.5mm]
\frac{1}{2} & -\frac{1}{6} & \phantom{-}\frac{2}{3} & & & \\[1.5mm]
0 & -\frac{2}{3} & \phantom{-}\frac{7}{6} & -\frac{1}{2} & & \\[1.5mm]
1 & \phantom{-}\frac{13}{30} & \phantom{-}\frac{1}{15} & \phantom{-}\frac{3}{5} & -\frac{1}{10} & \\[1.5mm]
\hline\\[-4mm]
& \phantom{-}\frac{1}{4} & \phantom{-}\frac{1}{4} & \phantom{-}\frac{5}{12} & -\frac{1}{12}  & \phantom{-}\frac{1}{6}
\end{array}
\]
}
\parbox{0.15\linewidth}{
\centering
%\vspace{6.47mm}
\[
\begin{array}{c|c}
\phantom{-}0 & \phantom{-}\frac{1}{2} \\[1.5mm]
-1 & \phantom{-}\frac{2}{3} \\[1.5mm]
-1 & -\frac{1}{2} \\[1.5mm]
-11 & -\frac{1}{10} \\[1.5mm]
\phantom{-}\frac{1}{10} & \phantom{-}\frac{1}{6} \\[1.5mm]
\end{array}
\]
}
\caption{The Butcher tableau and the 2N-storage coefficients for the only (5,4) method with rational coefficients known at present, labeled (5,4)$_5$.\label{tab_54_rat}}
\end{table}

Four schemes with reasonable properties that can be expressed in radicals and one scheme with rational coefficients have been found. They do not have the nodes $c_i$ in the increasing order, but at least, they are in the interval $0\leqslant c_i\leqslant 1$.
The (5,4)$_1$, (5,4)$_2$ set, shown in Table~\ref{tab_54_set1}, was found by imposing an extra condition $c_2+c_5=1$ to understand if a self-$c$-reflected, $\tilde A=A$, method exists. (There seem to be no such a (5,4) 2N-storage method.) Apart from the (5,4)$_1$, (5,4)$_2$ pair there are less useful pairs: a) with $c_2=c_4=(1+1/\sqrt{3})/2$, $c_3=c_5=(1-1/\sqrt{3})/2$, and b) with $c_2=(2+1/2^{1/3}+2^{1/3})/6$ and $c_3=\{1/2,2c_2\}$, $c_4=\{1-2c_2,1/2\}$, $c_5=1-c_2$ (\textit{i.e.}, some $c_3$ and $c_4$ outside of the $[0,1]$ range), that can also be expressed in radicals.

The (5,4)$_3$, (5,4)$_4$ set, shown in Table~\ref{tab_54_set2}, was found with different considerations that may prove useful in the future for methods of any order and number of stages. It is known that imposing $A_{i>1}=-1$ produces valid solutions. 
With this choice the Butcher tableau is completely expressed through the coefficients $b_i$, as discussed recently in Ref.~\cite{Bazavov2025a}.
The $d$-form uncovered another possibility.
One sets $A_2=-1/2$ and solves Eq.~(\ref{eq_A_d}) for $d_2$ (recall that $d_1\equiv1$). This gives $d_2=2$. Then one sets $A_3=-1$ and solves Eq.~(\ref{eq_A_d}) for $d_3$ and finds $d_3=2$. The process than self-replicates. Thus, there is a possibility of solutions with $A_2=-1/2$, $A_{i>2}=-1$ for which all non-trivial $d_i=2$, $i=2,\dots,s$. Solving, however, requires rewriting the order conditions in the $d$-form. The latter for a (5,4) method are given in \ref{sec_app_dform}. They themselves exhibit interesting symmetries which may need to be explored in the future. Note that while $c$-reflection symmetry appears to be a feature of methods of order $p\leqslant4$, factorization of the Butcher tableau and the $d$-form of the order conditions exist at any order.
Setting $d_i=2$ in Eqs.~(\ref{eq_oc_d_bc})--(\ref{eq_oc_d_ba2c}) greatly simplifies the system and the Singular package~\cite{DGPS} in one second reduces the system to four equations: one twelfth-order equation for $c_2$ and three relations linear in $c_{i>2}$. That twelfth-order equation has two double-degenerate real solutions, methods (5,4)$_3$, (5,4)$_4$ and eight complex solutions. Perhaps, other special values of the coefficients $d_i$ lead to a similar degree of simplification.  (Note that the known $A_{i>1}=-1$ case is a special case that has no $d$-form representation.) Curiously, out of all the found solutions in Fig.~\ref{fig_CKcurves}, only the (5,4)$_3$, (5,4)$_4$ pair exhibits the property $\tilde N=N$, however, $\tilde C\neq C$, and, as a result, $\tilde A\neq A$.

\begin{figure}
	\centering
	\includegraphics[width=0.49\textwidth]{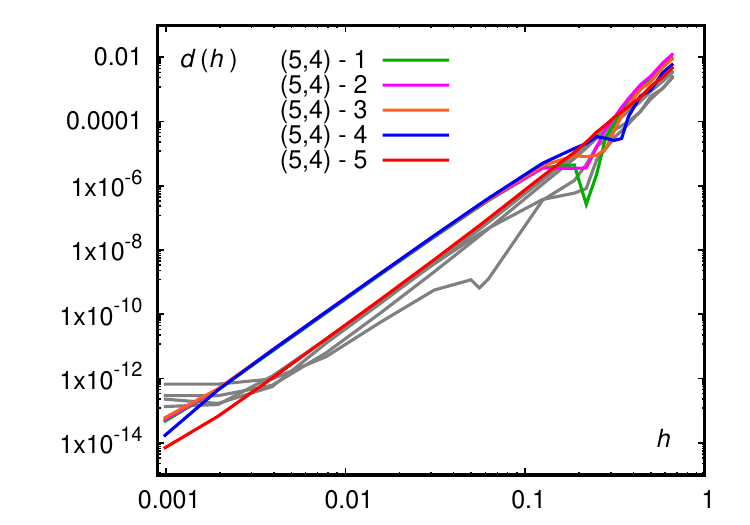}
	\hfill
	\includegraphics[width=0.49\textwidth]{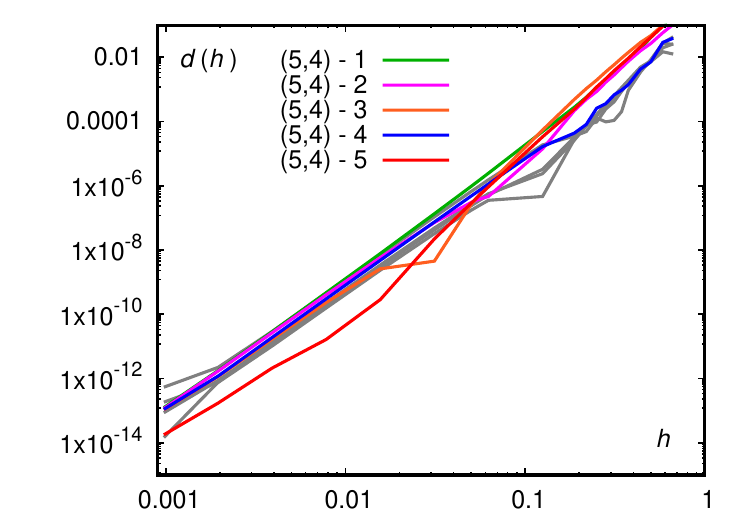}
	\caption{Scaling of the new (5,4)$_i$, $i=1,\dots5$ methods, shown in color, and the four (5,4) methods of Ref.~\cite{CK1994}, shown in gray, for test problems 1 (left) and 2 (right).\label{fig_test12}
	}
\end{figure}
By exploring special values of the coefficients $c_i$, \textit{i.e.}, $0$, $1/2$, $1$, one (5,4) 2N-storage scheme with rational coefficients, labeled (5,4)$_5$, has been found so far, shown as a black circle on the orange solution branch in Fig.~\ref{fig_CKcurves} and its Butcher tableau is listed in Table~\ref{tab_54_rat}.

The performance of the five new (5,4) schemes (in color) together with the four Carpenter-Kennedy schemes of Ref.~\cite{CK1994} (gray) for the same three test problems is shown in Figs.~\ref{fig_test12} and \ref{fig_test3}. The stability regions, defined in the standard way~\cite{ButcherBook}, are also shown in Fig.~\ref{fig_test3}. As expected, the schemes of Ref.~\cite{CK1994}, perform, in general, very well and have the largest stability region. For the first test problem, the new (5,4)$_{i=1\dots4}$ schemes perform worse than the methods of Ref.~\cite{CK1994}, while the (5,4)$_5$ scheme is competitive, Fig.~\ref{fig_test12} (left). For the second test problem the new (5,4)$_{i=1\dots4}$ schemes are competitive and the (5,4)$_5$ performs better for smaller step sizes, Fig.~\ref{fig_test12} (right).
For the third test problem all new schemes are similar in performance to the methods of Ref.~\cite{CK1994}, Fig.~\ref{fig_test3} (left). In terms of the stability regions, the new schemes are worse than the known methods, as exhibited in Fig.~\ref{fig_test3} (right). The corresponding values are
\begin{equation}
b_5a_{54}a_{43}a_{32}c_2 = \left\{
\begin{array}{ll}
\frac{1}{144}(3-\sqrt{3}), & (5,4)_{1,2},\\
\frac{1}{72}, & (5,4)_{3,4},\\
\frac{1}{360}, & (5,4)_{5}.
\end{array}
\right.\label{eq_5_extra}
\end{equation}

The (5,4)$_5$ scheme with rational coefficients illustrates some important points. $c_2=c_3=1/2$ means that this is a special case. This scheme does not have a proper $d$-form, as $d_2$ is infinite. There is also no $c$-reflected counterpart, since $c_5=1$ requires $\tilde c_2=0$, which is not allowed. Exhibited negative values of $B_i$ are typically, but not necessarily, a consequence of the nodes $c_i$ being non-monotonic.

In general, it appears that the new tools developed in Ref.~\cite{Bazavov2025a} and here can be helpful in further exploring the solution space of 2N-storage Runge-Kutta methods, as analytic results are not yet available for orders larger than three.

\begin{figure}
\centering
\includegraphics[width=0.49\textwidth]{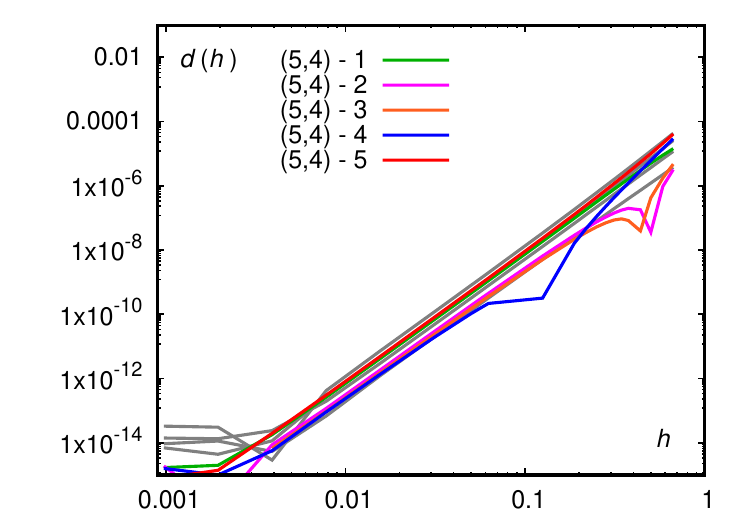}
\hfill
\includegraphics[width=0.49\textwidth]{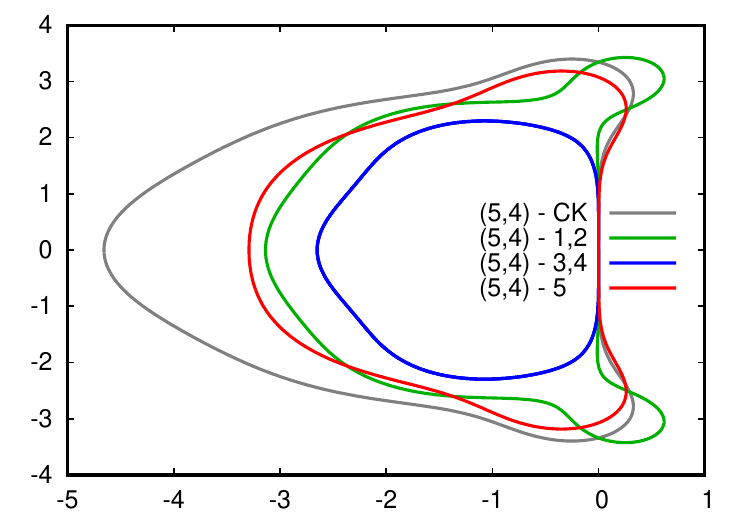}
\caption{Scaling of the new (5,4)$_i$, $i=1,\dots5$ methods, shown in color, and the four (5,4) methods of Ref.~\cite{CK1994}, shown in gray, for test problem 3 (left). Stability regions for all (5,4) methods considered (right).\label{fig_test3}
}
\end{figure}

\subsection{Experiments with (6,4) methods and beyond}

Investigating if $A_2=-1/2$, $A_{i>2}=-1$ solutions exists for (6,4) methods brings three surprises:
\begin{enumerate}
\item The $d$-form order conditions (similar to the ones listed in ~\ref{sec_app_dform} for (5,4) methods, but with all possible monomials including $c_6$ and $d_6$ added) allow for one free parameter. Note that in this case this is a system of seven equations with five variables.
\item Fixing $c_4=1/2$ produces an eighteenth-order equation for $c_2$ (after Groebner basis reduction in Singular~\cite{DGPS}) that admits solutions in radicals.
\item One of the solutions represents a self-$c$-reflected method, $\tilde A=A$. (Other solutions were not pursued due to unfavorable values of $c_2$.)
\end{enumerate}
The $c_3$ coefficient is then expressed through $c_2$ and its powers linearly, and the rest of the Butcher tableau is reconstructed from setting $c_5=1-c_3$, $c_6=1-c_2$ and $d_i=2$ for $i=2,\dots,6$ with Eq.~(\ref{eq_A_FD}). The 2N-storage coefficients can be found directly with Eqs.~(\ref{eq_A_d}) and (\ref{eq_B_d}). The solution can be expressed in the following way:
\begin{eqnarray}
\varphi_2&=&(6 \sqrt{3} + 9)^{1/3},\nonumber\\
\psi_2&=&\varphi_2 - \frac{3}{\varphi_2} + 14,\nonumber\\
\varphi_3&=&(6 \sqrt{3} - 9)^{1/3},\nonumber\\
\psi_3&=&\varphi_3 - \frac{3}{\varphi_3} + 2,\nonumber\\
c_2&=&\frac{1}{3} + \frac{1}{24}\sqrt{2\psi_2} - \frac{1}{6} \sqrt{\frac{1}{8} (42 - \psi_2) + \frac{19}{\sqrt{2 \psi_2}}},\label{eq_64_self_c2}\\
c_3&=&\frac{1}{3} + \frac{1}{24}\sqrt{2\psi_3} + \frac{1}{6} \sqrt{\frac{1}{8} (6 - \psi_3) + \frac{1}{\sqrt{2 \psi_3}}}.\label{eq_64_self_c3}
\end{eqnarray}
The Butcher tableau expressed in $c_2$ and $c_3$ parameters is given in Table~\ref{tab_64_self} and in decimals (in low precision to fit in the table, as one can always compute it to the desired precision from Eqs.~(\ref{eq_64_self_c2}) and (\ref{eq_64_self_c3})) in Table~\ref{tab_64_self_dec} together with the 2N-storage coefficients. The nodes $c_i$ are not in increasing order, but they are in the natural range $[0,1]$. The $b_4$ coefficient is negative but not too large in magnitude (the original (6,4) RK46-NL method of Ref.~\cite{BERLAND20061459} has $b_2\approx -1.27$ and $b_4\approx-1.21$ and nevertheless performs quite well on the three test problems).
The stability region is accidentally close to the one of RK46-NL as the additional ``tall tree'' constraints of order 5 and 6 are equal to $0.00802$ and $0.00108$, respectively ($0.00786$ and $0.00096$ for the RK46-NL method).

\begin{table}[h]
\centering
\[
\begin{array}{c|cccccc}
0 & & & & \\[1.5mm]
c_2 & c_2 & & & & \\[1.5mm]
c_3 & 2c_2-c_3 & 2(c_3-c_2) & & & & \\[1.5mm]
\frac{1}{2} & \frac{1}{2}-2(c_3-c_2) & 4c_3-2c_2-1 & 1-2c_3 & & & \\[1.5mm]
1-c_3 & 2c_2-c_3 & 2(c_3-c_2) & 0 & 1-2c_3 & & \\[1.5mm]
1-c_2 & c_2 & 0 & 2(c_3-c_2) & 1+2c_2-4c_3 & 2(c_3-c_2) & \\[1.5mm]
\hline\\[-4mm]
& 0 & 2c_2 & 2(c_3-2c_2) & 1-4(c_3-c_2)  & 2(c_3-2c_2) & 2c_2 
\end{array}
\]
\caption{The Butcher tableau of the self-$c$-reflected (6,4)$_1$ 2N-storage Runge-Kutta method given in terms of the parameters $c_2$ and $c_3$, Eqs.~(\ref{eq_64_self_c2}) and (\ref{eq_64_self_c3}). The 2N-storage coefficients are not tabulated due to space constraints, but they are simply: $A_1=0$, $A_2=-1/2$, $A_{i>2}=-1$ and the $B_i$ coefficients are the entries on the diagonal of the Butcher tableau. Note some symmetries: the part of the Butcher tableau above the antidiagonal filled with 0s is symmetric when mirrored with respect to the $c_4=1/2$ row, while the other part below the antidiagonal is symmetric when mirrored with respect to the $a_{i4}$ column. The $a_{i>1,j}$ block exhibits symmetry/antisymmetry with respect to the antidiagonal, depending on the parity of the subdiagonal.\label{tab_64_self}}
\end{table}

\begin{table}[h]
\centering
\parbox{.72\linewidth}{
\centering
\[
\begin{array}{r|rrrrrr}
0 & & & & \\[1.5mm]
0.1342 & 0.1342 & & & & \\[1.5mm]
0.5940 & -0.3257 & \phantom{-}0.9197 & & & & \\[1.5mm]
0.5000 & -0.4197 & 1.1077 & -0.1880& & & \\[1.5mm]
0.4060 & -0.3257 & 0.9197 & 0 & -0.1880 & & \\[1.5mm]
0.8658 & 0.1342 & 0 & 0.9197 & -1.1077 &  \phantom{-}0.9197 & \\[1.5mm]
\hline\\[-4mm]
& 0 & 0.2683 & 0.6513 & -0.8393  & 0.6513 &  \phantom{-}0.2683 
\end{array}
\]
}
\parbox{.22\linewidth}{
\[
\begin{array}{r|r}
0 & 0.1342 \\[1.5mm]
-0.5 & 0.9197 \\[1.5mm]
-1.0 & -0.1880 \\[1.5mm]
-1.0 & -0.1880 \\[1.5mm]
-1.0 &  0.9197 \\[1.5mm]
-1.0 &  0.2683 \\[1.5mm]
\end{array}
\]
}
\caption{The Butcher tableau and the 2N-storage coefficients of the self-$c$-reflected (6,4) 2N-storage Runge-Kutta method listed in Table~\ref{tab_64_self} expressed in decimals.\label{tab_64_self_dec}}
\end{table}

Keeping the self-$c$-reflected constraints, $\tilde C=C$, $\tilde N=N$, \textit{i.e.}, $c_4=1/2$, $c_5=1-c_3$, $c_6=1-c_2$, $d_6=d_2$, $d_5=d_3$, but lifting the requirement $d_{i}=2$, leads to a significantly more complicated but still solvable system in the $d$-form. Bringing it to a triangular form produces 18 real solutions (one of them being the scheme in Table~\ref{tab_64_self}) that represent self-$c$-reflected (6,4) 2N-storage schemes. Two of them have monotonic nodes $0\leqslant c_i\leqslant 1$. Unfortunately, those solutions cannot be represented in radicals. Their 2N-storage coefficients are presented in Table~\ref{tab_64_self2} with 30-digit precision. By computing their $d_i$ parameters one can verify that they are indeed self-$c$-reflected schemes.
The additional linear fifth- and sixth- order constraints for these methods are equal to $0.00554$ and $0.00071$, and $0.00613$ and $0.00079$, respectively.

\begin{table}[t]
\centering
\begin{tabular}{c|l|l}
&  \multicolumn{1}{|c|}{(6,4)$_2$} &  \multicolumn{1}{|c}{(6,4)$_3$} \\
\hline
$A_1$  & $\phantom{-}0$  & $\phantom{-}0$ \\
$A_2$  &
$-6.031817048888810491391377264767e-01$  &
$-6.416708334845571026342325707722e-01$ \\
$A_3$  &
$-1.363368319594623838259959855767e+00$  &
$-2.327050967547145814787059991623e+00$ \\
$A_4$  &
$-2.967773103326277497583509014746e-01$  &
$-2.279146815287159770076873625409e+00$ \\
$A_5$  &
$-6.263264022440050754531253706180e-01$  &
$-1.342061812908205877178026117183e+00$ \\
$A_6$  &
$-1.314148538206011636475086658292e+00$  &
$-3.862002622951114837161553551155e+00$ \\
$B_1$  &
$\phantom{-}1.709928027134245603242499940137e-01$  &
$\phantom{-}3.788384854424563341372856700833e-02$\\
$B_2$  &
$\phantom{-}4.823996417074247639531967922484e-01$  &
$\phantom{-}2.648805722810717308543538775008e-01$\\
$B_3$  &
$\phantom{-}2.997495407007149877239893157502e-01$  &
$\phantom{-}2.210064599000365587870662788315e+00$ \\
$B_4$  &
$\phantom{-}1.592788329289030209205971711461e-01$  &
$\phantom{-}5.910022949231203114708031548267e-01$ \\
$B_5$  &
$\phantom{-}4.170565624503425105376525782607e-01$  &
$\phantom{-}5.712583097229739358762356434599e-01$ \\
$B_6$  &
$\phantom{-}4.309095745334582935148984815673e-01$  &
$\phantom{-}1.057235974192264216640141659307e-01$ \\
$c_1$  & $\phantom{-}0$  & $\phantom{-}0$ \\
$c_2$  &
$\phantom{-}1.709928027134245603242499940137e-01$  &
$\phantom{-}3.788384854424563341372856700833e-02$\\
$c_3$  &
$\phantom{-}3.624178060979794860791199041947e-01$  &
$\phantom{-}1.327982832358555939494038565940e-01$\\
$c_4$  &
$\phantom{-}0.5$  & $\phantom{-}0.5$ \\
$c_5$  &
$\phantom{-}6.375821939020205139208800958053e-01$  &
$\phantom{-}8.672017167641444060505961434060e-01$\\
$c_6$  &
$\phantom{-}8.290071972865754396757500059863e-01$  &
$\phantom{-}9.621161514557543665862714329917e-01$
\end{tabular}
\caption{Two non-trivial self-$c$-reflected (6,4) 2N-storage methods whose parameters $d_i$ are different from 2.\label{tab_64_self2}
}
\end{table}

Keeping the constraint $\tilde C=C$ and lifting now the $\tilde N=N$ constraint but with $d_4=2$ fixed, produces a one-parameter family of solutions, with $c_2$ chosen as free parameter:
\begin{equation}
\label{eq_64cc_c2c3}
\left(c_2-\frac{1}{4}\right)c_3^3-\left(c_2-\frac{1}{4}\right)(c_2+1)c_3^2+
\frac{1}{4}\left[\left(c_2-\frac{1}{4}\right)\left(c_2+\frac{5}{4}\right)-\frac{1}{48}
\right]c_3-\frac{1}{24}\left(c_2-\frac{1}{4}\right)=0.
\end{equation}
The $d_2$, $d_3$, $d_5$ and $d_6$ coefficients depend linearly on the powers of $c_2$ and $c_3$. It appears that the $\tilde C=C$ with $d_4=2$ constraint manifests itself also in the relations
\begin{equation}
\frac{1}{d_2}+\frac{1}{d_6}=
\frac{1}{d_3}+\frac{1}{d_5}=1,
\end{equation}
and enforces some of the Butcher tableau symmetries of the self-$c$-reflected method shown in Table~\ref{tab_64_self}, such as zero antidiagonal, \textit{i.e.}, $b_1=a_{62}=a_{53}=0$, and the symmetry of the weights $b_2=b_6$, $b_3=b_5$. (These features have been observed numerically, but not yet proved analytically based on Eq.~(\ref{eq_64cc_c2c3}).)

\begin{table}[h]
\centering
\parbox{.46\linewidth}{
\centering
\[
\begin{array}{r|cccccc}
0 & & & & \\[1.5mm]
\frac{1}{8} & \phantom{-}\frac{1}{8} & & & & \\[1.5mm]
\frac{1}{4} & \phantom{-}\frac{5}{84} & \phantom{-}\frac{4}{21} & & & & \\[1.5mm]
\frac{1}{2} & \phantom{-}\frac{19}{42} & -\frac{20}{21} & \phantom{-}1 & & & \\[1.5mm]
\frac{3}{4} & \phantom{-}\frac{5}{84} & \phantom{-}\frac{4}{21} & \phantom{-}0 & \phantom{-}\frac{1}{2} & & \\[1.5mm]
\frac{7}{8} & \phantom{-}\frac{1}{8} & \phantom{-}0 &  \phantom{-}\frac{1}{6} & \phantom{-}\frac{5}{12} &   \phantom{-}\frac{1}{6} & \\[1.5mm]
\hline\\[-4mm]
& \phantom{-}0 & \phantom{-}\frac{4}{11} & -\frac{5}{33} & \phantom{-}\frac{19}{33}  & -\frac{5}{33} &  \phantom{-}\frac{4}{11} 
\end{array}
\]
}
\parbox{.12\linewidth}{
\[
\begin{array}{c|c}
\phantom{-}0 & \frac{1}{8} \\[1.5mm]
-\frac{11}{32} & \frac{4}{21} \\[1.5mm]
-\frac{8}{7} & 1 \\[1.5mm]
-2 & \frac{1}{2} \\[1.5mm]
-\frac{1}{2} & \frac{1}{6} \\[1.5mm]
-\frac{7}{8} &  \frac{4}{11} \\[1.5mm]
\end{array}
\]
}
\caption{The Butcher tableau and the 2N-storage coefficients of the (6,4)$_4$ scheme.\label{tab_64_cc_rat_a}}
\end{table}
A family of usable schemes exists in the window $1/8\leqslant c_2<1/6$. In that range there are solutions with $c_2<c_3<c_4=1/2$, \textit{i.e.}, all $c_i$ are monotonic. Solutions with rational coefficients exist for $c_2=1/8$, $1/4$ and possibly other values. $c_2=1/6$ and $c_2=1$ are special cases that need to be solved separately. The $c_2=1/4$ solution produces $c_3=0$ and is not pursued. A pair of $c$-reflected methods corresponding to $c_2=1/8$ is listed in Tables~\ref{tab_64_cc_rat_a} and \ref{tab_64_cc_rat_b}, labeled (6,4)$_4$ and (6,4)$_5$, respectively. The fifth- and sixth-order linear conditions for them are $4/693$ and $1/1386$. Another usable solution $c_2=1/8$, $c_3=(21 - \sqrt{57})/48\approx0.28$ produces a pair of $c$-reflected methods with monotonic $c_i$ that can be expressed in radicals. Their performance is similar to the methods in Tables~\ref{tab_64_cc_rat_a} and \ref{tab_64_cc_rat_b} and therefore their Butcher tableaux are not presented.
\begin{table}[h]
\centering
\parbox{.46\linewidth}{
\centering
\[
\begin{array}{r|cccccc}
0 & & & & \\[1.5mm]
\frac{1}{8} & \phantom{-}\frac{1}{8} & & & & \\[1.5mm]
\frac{1}{4} & -\frac{5}{44} & \phantom{-}\frac{4}{11} & & & & \\[1.5mm]
\frac{1}{2} & \phantom{-}\frac{1}{22} & \phantom{-}\frac{4}{33} & \phantom{-}\frac{1}{3} & & & \\[1.5mm]
\frac{3}{4} & -\frac{5}{44} & \phantom{-}\frac{4}{11} & \phantom{-}0 & \phantom{-}\frac{1}{2} & & \\[1.5mm]
\frac{7}{8} & \phantom{-}\frac{1}{8} & \phantom{-}0 &  \phantom{-}\frac{1}{2} & -\frac{1}{4} &   \phantom{-}\frac{1}{2} & \\[1.5mm]
\hline\\[-4mm]
& \phantom{-}0 & \phantom{-}\frac{4}{21} & \phantom{-}\frac{5}{21} & \phantom{-}\frac{1}{7}  & \phantom{-}\frac{5}{21} &  \phantom{-}\frac{4}{21} 
\end{array}
\]
}
\parbox{.12\linewidth}{
\[
\begin{array}{c|c}
\phantom{-}0 & \frac{1}{8} \\[1.5mm]
-\frac{21}{32} & \frac{4}{11} \\[1.5mm]
-\frac{8}{11} & \frac{1}{3} \\[1.5mm]
-\frac{2}{3} & \frac{1}{2} \\[1.5mm]
-\frac{3}{2} & \frac{1}{2} \\[1.5mm]
-\frac{11}{8} &  \frac{4}{21} \\[1.5mm]
\end{array}
\]
}
\caption{The Butcher tableau and the 2N-storage coefficients of the (6,4)$_5$ scheme.\label{tab_64_cc_rat_b}}
\end{table}

\begin{table}[h]
\centering
\parbox{.46\linewidth}{
\centering
\[
\begin{array}{r|cccccc}
0 & & & & \\[1.5mm]
\frac{1}{6} & \phantom{-}\frac{1}{6} & & & & \\[1.5mm]
\frac{1}{6} & -\frac{5}{24} & \phantom{-}\frac{3}{8} & & & & \\[1.5mm]
\frac{1}{2} & \phantom{-}\frac{1}{8} & \phantom{-}\frac{1}{24} & \phantom{-}\frac{1}{3} & & & \\[1.5mm]
\frac{5}{6} & -\frac{5}{24} & \phantom{-}\frac{3}{8} & \phantom{-}0 & \phantom{-}\frac{2}{3} & & \\[1.5mm]
\frac{5}{6} & \phantom{-}\frac{1}{6} & \phantom{-}0 &  \phantom{-}\frac{3}{8} & -\frac{1}{12} &   \phantom{-}\frac{3}{8} & \\[1.5mm]
\hline\\[-4mm]
& \phantom{-}0 & \phantom{-}\frac{1}{6} & \phantom{-}\frac{5}{24} & \phantom{-}\frac{1}{4}  & \phantom{-}\frac{5}{24} &  \phantom{-}\frac{1}{6} 
\end{array}
\]
}
\parbox{.12\linewidth}{
\[
\begin{array}{c|c}
\phantom{-}0 & \frac{1}{6} \\[1.5mm]
-1 & \frac{3}{8} \\[1.5mm]
-1 & \frac{1}{3} \\[1.5mm]
-\frac{1}{2} & \frac{2}{3} \\[1.5mm]
-2 & \frac{3}{8} \\[1.5mm]
-1 &  \frac{1}{6} \\[1.5mm]
\end{array}
\]
}
\caption{The Butcher tableau and the 2N-storage coefficients of the (6,4)$_6$ scheme.\label{tab_64_rat1}}
\end{table}
\begin{table}[h]
\centering
\parbox{.48\linewidth}{
\centering
\[
\begin{array}{r|cccccc}
0 & & & & \\[1.5mm]
\frac{1}{2} & \phantom{-}\frac{1}{6} & & & & \\[1.5mm]
\frac{1}{2} & -\frac{1}{6} & \phantom{-}\frac{2}{3} & & & & \\[1.5mm]
0 & -\frac{2}{3} & \phantom{-}\frac{7}{6} & -\frac{1}{2} & & & \\[1.5mm]
0 & -\frac{7}{12} & \phantom{-}\frac{13}{12} & -\frac{5}{12} & -\frac{1}{12} & & \\[1.5mm]
1 & \phantom{-}\frac{5}{12} & \phantom{-}\frac{1}{12} &  \phantom{-}\frac{7}{12} & -\frac{13}{12} &   \phantom{-}1 & \\[1.5mm]
\hline\\[-4mm]
& \phantom{-}\frac{1}{4} & \phantom{-}\frac{1}{4} & \phantom{-}\frac{5}{12} & -\frac{11}{12}  & \phantom{-}\frac{5}{6} &  \phantom{-}\frac{1}{6} 
\end{array}
\]
}
\parbox{.12\linewidth}{
\[
\begin{array}{c|c}
\phantom{-}0 & \phantom{-}\frac{1}{2} \\[1.5mm]
-1 & \phantom{-}\frac{2}{3} \\[1.5mm]
-1 & -\frac{1}{2} \\[1.5mm]
-1 & -\frac{1}{12} \\[1.5mm]
-1 & \phantom{-}1 \\[1.5mm]
-1 &  \phantom{-}\frac{1}{6} \\[1.5mm]
\end{array}
\]
}
\caption{The Butcher tableau and the 2N-storage coefficients for the  (6,4)$_7$ scheme.\label{tab_64_rat2}}
\end{table}
\begin{table}[h]
\centering
\parbox{.50\linewidth}{
\centering
\[
\begin{array}{r|cccccc}
0 & & & & \\[1.5mm]
1 & \phantom{-}1 & & & & \\[1.5mm]
1 & \phantom{-}\frac{37}{36} & -\frac{1}{36} & & & & \\[1.5mm]
\frac{1}{2} & \phantom{-}\frac{19}{36} & \phantom{-}\frac{17}{36} & -\frac{1}{2} & & & \\[1.5mm]
\frac{1}{2} & -\frac{5}{36} & \phantom{-}\frac{41}{36} & -\frac{7}{6} & \phantom{-}\frac{2}{3} & & \\[1.5mm]
1 & \phantom{-}\frac{13}{36} & \phantom{-}\frac{23}{36} &  -\frac{2}{3} & -\frac{1}{6} &   \phantom{-}\frac{1}{2} & \\[1.5mm]
\hline\\[-4mm]
& \phantom{-}\frac{1}{6} & \phantom{-}\frac{5}{6} & -\frac{31}{36} & \phantom{-}\frac{13}{36}  & \phantom{-}\frac{11}{36} &  \phantom{-}\frac{7}{36} 
\end{array}
\]
}
\parbox{.12\linewidth}{
\[
\begin{array}{c|c}
\phantom{-}0 & \phantom{-}1 \\[1.5mm]
-1 & -\frac{1}{36} \\[1.5mm]
-1 & -\frac{1}{2} \\[1.5mm]
-1 & \phantom{-}\frac{2}{3} \\[1.5mm]
-1 & \phantom{-}\frac{1}{2} \\[1.5mm]
-1 &  \phantom{-}\frac{7}{36} \\[1.5mm]
\end{array}
\]
}
\caption{The Butcher tableau and the 2N-storage coefficients for the (6,4)$_8$ scheme.\label{tab_64_rat3}}
\end{table}

Exploring now a special case, $c_4=1/2$, $c_2=c_3$, $c_5=c_6$, which has no proper $d$-form representation, produces several schemes. One that may be of interest is a scheme with rational coefficients shown in Table~\ref{tab_64_rat1} and labeled (6,4)$_6$. While its $d_2$ and $d_5$ are infinite and no $c$-reflection transformation can be defined, it has a similar structure and some of the symmetries of the self-$c$-reflected scheme in Table~\ref{tab_64_self}. The fifth- and sixth-order linear constraints are equal to $1/192$ and $1/1152$, respectively.

The commonly used $A_{i>1}=-1$ special case (that enforces $c_2=c_3$, $c_4=c_5$ and $c_6=1$~\cite{Bazavov2025a}) results in a one-parameter family of (6,4) methods, that can be parameterized by, \textit{e.g.}, $c_2$. The special points $c_2=1/3$, $1/2$ and $1$ need to be solved separately. There is not much room for tuning though, as there is a very narrow window of $c_2$ values where $c_4$ falls in $[0,1]$ range. The two rational schemes are possible for $c_2=1/2$ and $c_2=1$, as shown Tables~\ref{tab_64_rat2}, labeled (6,4)$_7$, and \ref{tab_64_rat3}, labeled (6,4)$_8$, respectively. There is no (real) solution for $c_2=1/3$. There is some resemblance between the (6,4)$_7$ scheme in Table~\ref{tab_64_rat2} and the rational (5,4)$_5$ scheme in Table~\ref{tab_54_rat}. The fifth- and sixth- order linear constraints for these two schemes are, respectively, $1/72$ and $1/432$, and  $1/216$ and $7/7776$.

The performance of the new (6,4)$_i$, $i=1,\dots,8$ schemes considered here is compared with the HALERK64 of Ref.~\cite{ALLAMPALLI20093837} and RK46-NL of Ref.~\cite{BERLAND20061459} in Figs.~\ref{fig_64test12} and \ref{fig_64test3} (left) for the three test problem. The stability regions are shown in Fig.~\ref{fig_64test3} (right). The performance of the new schemes is, in general, comparable to HALERK64 and RK46-NL, but depends on the problem. For instance, while the (6,4)$_4$ and (6,4)$_5$ schemes perform very well in problems 1 and 2, they are noticeably less accurate for problem 3. The stability domain of (6,4)$_6$ scheme is comparable to HALERK64 and of (6,4)$_1$ to RK46-NL.

Brief experiments with (8,4) schemes show that with setting all $d_i=2$, $i=2,\dots,8$ and $c_5=1/2$ again allows for building self-$c$-reflected schemes whose Butcher tableau exhibits the same symmetries as the ones in Table~\ref{tab_64_self}. With eight stages there is a one-parameter family of solutions even after $c_5=1/2$ is fixed. It appears, they share the same feature with the (6,4) self-$c$-reflected scheme with $d_i=2$: the nodes $c_i$ are non-monotonic and, at least, one of the $c_{i=2,3,4}$ has to be larger than $c_5=1/2$. Taking that into account and to fix the free parameter, an extra constraint $c_3+c_4=1$ can be added. This results in a self-$c$-reflected scheme that can also be represented in radicals. The solution is:
\begin{eqnarray}
c_2&=&\frac{1}{2}-\frac{1}{4}\sqrt{2}~\approx~0.146,\nonumber\\
c_3&=&\frac{1}{2} - \frac{1}{4}\left(\sqrt{2} - \frac{4}{3}\right)^{\frac{1}{3}}
\approx~0.392.\nonumber
\end{eqnarray}
The interested reader can reconstruct the full scheme from those two values.

It appears that schemes with an odd number of stages $s$ are structurally different from the schemes with even $s$, when $d_i=2$, $i=2,\dots,s$, although solutions seem to exist for $s\geqslant7$. Existence and properties of self-$c$-reflected schemes with $d_i\neq2$ require further investigation.

\begin{figure}[h]
\centering
\includegraphics[width=0.49\textwidth]{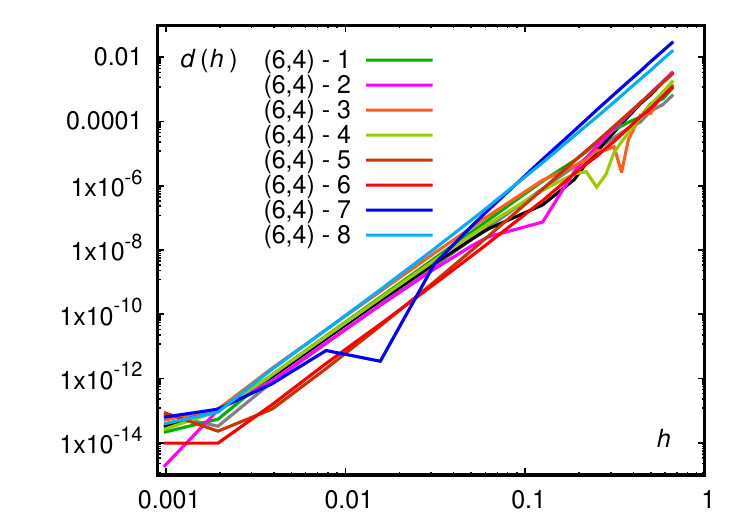}
\hfill
\includegraphics[width=0.49\textwidth]{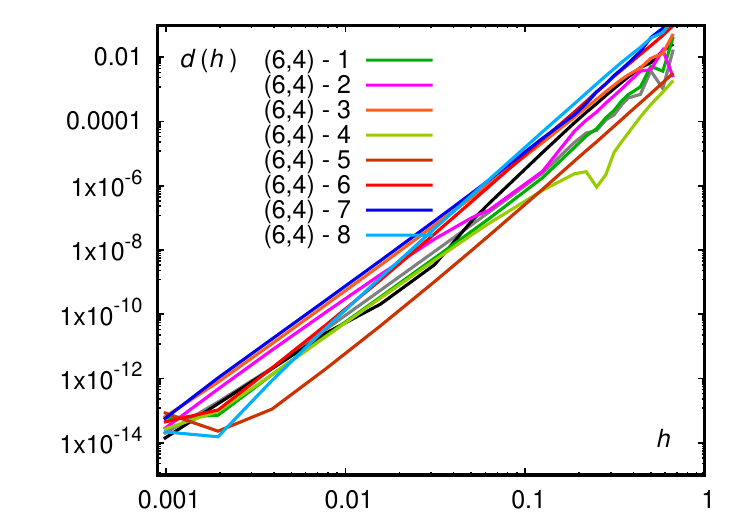}
\caption{Scaling of the new (6,4)$_i$, $i=1,\dots8$ methods, shown in color, and the HALERK64 method of Ref.~\cite{ALLAMPALLI20093837}, shown in black and the RK46-NL method of Ref.~\cite{BERLAND20061459}, shown in gray, for test problems 1 (left) and 2 (right).\label{fig_64test12}
}
\end{figure}
\begin{figure}
\centering
\includegraphics[width=0.49\textwidth]{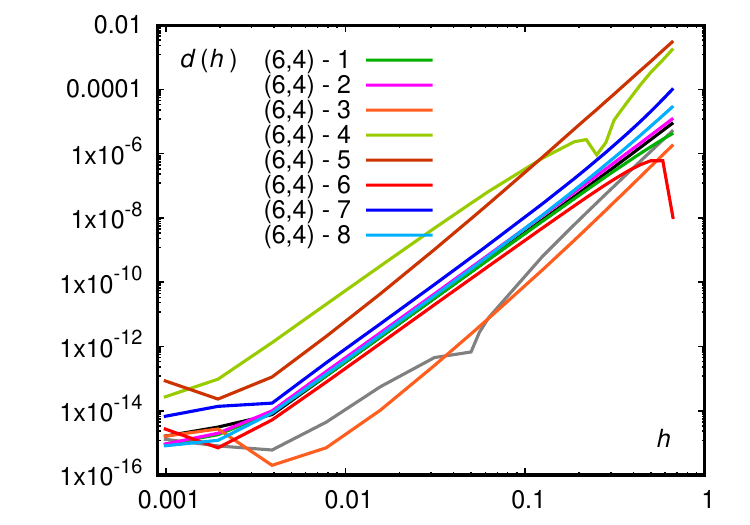}
\hfill
\includegraphics[width=0.49\textwidth]{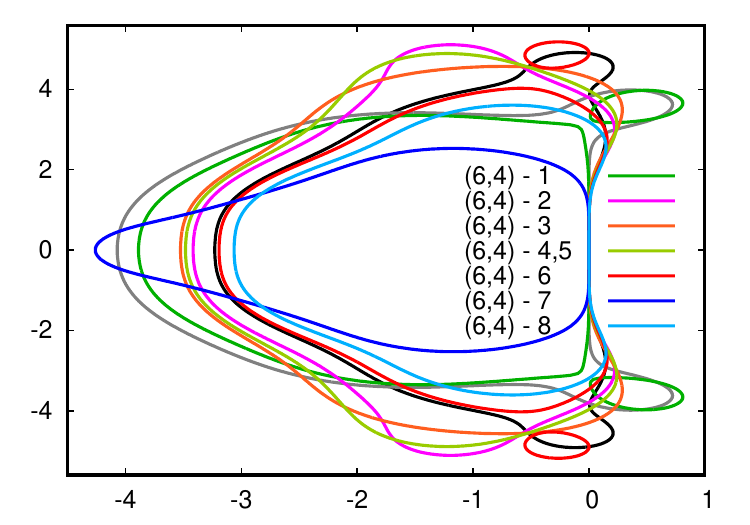}
\caption{Scaling of the new (6,4)$_i$, $i=1,\dots8$ methods, shown in color, and the HALERK64 method of Ref.~\cite{ALLAMPALLI20093837}, shown in black and the RK46-NL method of Ref.~\cite{BERLAND20061459}, shown in gray, for test problem 3 (left). Stability regions for all (6,4) methods considered (right). The stability region for HALERK64~\cite{ALLAMPALLI20093837} is shown in black and for RK46-NL~\cite{BERLAND20061459} in gray.\label{fig_64test3}
}
\end{figure}

\section{Conclusions}
\label{sec_concl}
The main result presented here is factorization of the (augmented) Butcher tableau for 2N-storage Runge-Kutta methods of Williamson's type, Eq.~(\ref{eq_A_FD}). This is a feature of all general (\textit{i.e.}, excluding special cases, such as $c_2=c_3$, $c_3=c_4$, etc.) methods independently of their order. Examining the properties of the factors revealed what is suggested to be called $c$-reflection symmetry of 2N-storage methods: for a general method of order of accuracy $p\leqslant4$ that admits a $d$-form representation, there is a $c$-reflected method of the same order of accuracy. The transformation relating the augmented Butcher tableaux of the methods is Eq.~(\ref{eq_tA_GAG}). It is proven that the $c$-reflected method satisfies the order conditions at the same order in Theorem~\ref{th_main}.

It appears that the $c$-reflection symmetry is broken at order five. Perhaps, something is to be learned from the fact that it is broken by the order condition that cannot be represented in the matrix form, similar to Eqs.~(\ref{eq_oc_RK_b_P})--(\ref{eq_oc_RK_ba2c_P}). The numerical evidence about 2N-storage methods of order five is based on the single fifth-order method available in the literature~\cite{Yan2017}. If/when more fifth-order methods are available, their $c$-reflection properties need to be examined.

The scaling of the error for several 2N-storage Runge-Kutta methods of order four found in the literature and their $c$-reflected counterparts is numerically tested for three benchmark problems. The takeaway is that it is beneficial to examine $c$-reflected methods, since they are trivial to construct for existing methods, as they may provide better scaling in particular situations. When developing 2N-storage methods by numerically solving the 2N-storage order conditions, checking simultaneously the $c$-reflected methods doubles the number of available solutions at no cost.

Next, branches of solutions for (5,4) 2N-storage methods, first explored in Ref.~\cite{CK1994}, are constructed numerically with Newton-Raphson method, based on the formulation of the 2N-storage order conditions of Ref.~\cite{Bazavov2025a}. As expected, they respect the $c$-reflection symmetry, as shown in Fig.~\ref{fig_CKcurves}. Four new (5,4) schemes that can be expressed in radicals and one with rational coefficients have been found and examined. While they do not have a desirable property of increasing nodes $c_i$, they still perform reasonably well on the three test problems considered. Experimentation with (6,4) methods revealed several schemes with usable properties. Some of them can be expressed in rationals or radicals. Most importantly, it is established that self-$c$-reflected (6,4) and (8,4) schemes exist, while there seem to be no such (5,4) scheme.

In the absence of closed-form analytic solutions for 2N-storage methods of order four and above, the symmetries and new properties described here and in Ref.~\cite{Bazavov2025a} can help in development and optimization of these schemes.
It may be valuable to explore if any other Runge-Kutta methods possess properties similar to the ones found in the 2N-storage methods of Williamson's type, such as, most importantly, factorization of the (augmented) Butcher tableau.

\bigskip
\textbf{Acknowledgements.}
The author expresses deep gratitude to the CERN Theory Division for hospitality and financial support where a large fraction of this work was carried out during the sabbatical leave, and, in particular, to Matteo Di Carlo, Felix Erben, Philippe de Forcrand, Andreas J\"{u}ttner, Simon Kuberski and Tobias Tsang. The author is indebted to Andreas von Manteuffel  for illuminating discussions on the subject of Groebner bases and general symbolic and numerical tools for solving systems of nonlinear equations, and to Leon Hostetler for careful reading and comments on the manuscript.
This work was in part supported
by the U.S. National Science Foundation under award
PHY23-09946.

\appendix

\section{Order conditions for a Runge-Kutta method up to and including order four}
\label{sec_app_oc}
The order conditions for a Runge-Kutta method to be globally third order of accuracy are:
\begin{eqnarray}
\sum_i b_i &=&1,\label{eq_oc_RK_b}\\
\sum_i b_ic_i &=&\frac{1}{2},\label{eq_oc_RK_bc}\\
\sum_i b_ic_i^2 &=&\frac{1}{3},\label{eq_oc_RK_bc2}\\
\sum_i\sum_j b_i a_{ij} c_j &=&\frac{1}{6}.\label{eq_oc_RK_bac}
\end{eqnarray}

Additional order conditions for a Runge-Kutta method to be globally fourth order are:
\begin{eqnarray}
\sum_i b_ic_i^3 &=&\frac{1}{4},\label{eq_oc_RK_bc3}\\
\sum_i\sum_j b_i c_i a_{ij} c_j &=&\frac{1}{8},\label{eq_oc_RK_bcac}\\
\sum_i\sum_j b_i a_{ij} c_j^2 &=&\frac{1}{12},\label{eq_oc_RK_bac2}\\
\sum_i\sum_j\sum_k b_i a_{ij}a_{jk} c_k &=&\frac{1}{24}.\label{eq_oc_RK_ba2c}
\end{eqnarray}

It is understood that in summations in Eqs.~(\ref{eq_oc_RK_b})--(\ref{eq_oc_RK_ba2c}) the indices run over all possible values where the corresponding coefficient is not zero.

\section{Order conditions for a (5,4) 2N-storage Runge-Kutta method in the $d$-form}
\label{sec_app_dform}
The first order condition, Eq.~(\ref{eq_oc_RK_b}) is fulfilled by construction of the $d$-form. The other seven for a five-stage method of order four are given below in the $d$-form. There is certain regularity, namely, in the monomials the power of the $d_i$ coefficients is either 0 or 1. For order conditions that involve single $b_i$ the $d$-form order conditions contain terms with $d_i$, for those that involve $b_i$ and $a_{ij}$, the lowest order terms are $d_id_j$, and so on. The order of the conditions is carried by the total power of the coefficients $c_i$ in the monomial (keeping in mind that $1\equiv c_6$ for a five-stage method). A minor limitation of the $d$-form of the order conditions is that it only exists for the general case, and it does not capture special cases such as $c_2=c_3$, $c_3=c_4$, etc.

\begin{eqnarray}
\label{eq_oc_d_bc}
\frac{1}{2}&=&c_2(1 - c_2)d_2 + c_3(1 - c_3)d_3 + c_4(1 - c_4)d_4 + c_5(1 - c_5)d_5\nonumber\\
&-& c_2(1 - c_3)d_2d_3 - c_2(1 - c_4)d_2d_4 - c_2(1 - c_5)d_2d_5\nonumber\\
&-& c_3(1 - c_4)d_3d_4 - c_3(1 - c_5)d_3d_5 - c_4(1 - c_5)d_4d_5\nonumber\\
&+& c_2(1 - c_4)d_2d_3d_4 + c_2(1 - c_5)d_2d_3d_5
+ c_2(1 - c_5)d_2d_4d_5 + c_3(1 - c_5)d_3d_4d_5\nonumber\\
&-& c_2(1 - c_5)d_2d_3d_4d_5.
\end{eqnarray}
\begin{eqnarray}
\label{eq_oc_d_bc2}
\frac{1}{3}&=&c_2^2(1 - c_2)d_2 + c_3^2(1 - c_3)d_3 + c_4^2(1 - c_4)d_4 
+ c_5^2(1 - c_5)d_5\nonumber\\
&-& c_2^2(1 - c_3)d_2d_3 - c_2^2(1 - c_4)d_2d_4 - c_2^2(1 - c_5)d_2d_5\nonumber\\
&-& c_3^2(1 - c_4)d_3d_4 - c_3^2(1 - c_5)d_3d_5 - c_4^2(1 - c_5)d_4d_5\nonumber\\
&+& c_2^2(1 - c_4)d_2d_3d_4 + c_2^2(1 - c_5)d_2d_3d_5 + c_2^2(1 - c_5)d_2d_4d_5 + c_3^2(1 - c_5)d_3d_4d_5\nonumber\\
&-& c_2^2(1 - c_5)d_2d_3d_4d_5. 
\end{eqnarray}
\begin{eqnarray}
\label{eq_oc_d_bac}
\frac{1}{6} &=& c_2(c_3 - c_2)(1 - c_3)d_2d_3 + c_2(c_4 - c_2)(1 - c_4)d_2d_4
+ c_2(c_5 - c_2)(1 - c_5)d_2d_5\nonumber\\
&+& c_3(c_4 - c_3)(1 - c_4)d_3d_4 + c_3(c_5 - c_3)(1 - c_5)d_3d_5
+ c_4(c_5 - c_4)(1 - c_5)d_4d_5\nonumber\\
&-& 
c_2(c_4 - c_2)(1 - c_4)d_2d_3d_4 - c_2(c_5 - c_2)(1 - c_5)d_2d_3d_5\nonumber\\
&-& c_2(c_5 - c_2)(1 - c_5)d_2d_4d_5 - c_3(c_5 - c_3)(1 - c_5)d_3d_4d_5\nonumber\\
&+& c_2(c_5 - c_2)(1 - c_5)d_2d_3d_4d_5.
\end{eqnarray}
\begin{eqnarray}
\label{eq_oc_d_bc3}
\frac{1}{4} &=& c_2^3(1 - c_2)d_2 + c_3^3(1 - c_3)d_3 + c_4^3(1 - c_4)d_4 + c_5^3(1 - c_5)d_5\nonumber\\
&-& c_2^3(1 - c_3)d_2d_3 - c_2^3(1 - c_4)d_2d_4 - c_2^3(1 - c_5)d_2d_5\nonumber\\
&-& c_3^3(1 - c_4)d_3d_4 - c_3^3(1 - c_5)d_3d_5 - c_4^3(1 - c_5)d_4d_5\nonumber\\
&+& c_2^3(1 - c_4)d_2d_3d_4 + c_2^3(1 - c_5)d_2d_3d_5 + c_2^3(1 - c_5)d_2d_4d_5 + 
c_3^3(1 - c_5)d_3d_4d_5\nonumber\\
&-& c_2^3(1 - c_5)d_2d_3d_4d_5.
\end{eqnarray}
\begin{eqnarray}
\label{eq_oc_d_bcac}
\frac{1}{8} &=& c_2c_3(c_3 - c_2)(1 - c_3)d_2d_3 + c_2c_4(c_4 - c_2)(1 - c_4)d_2d_4 + 
c_2c_5(c_5 - c_2)(1 - c_5)d_2d_5\nonumber\\
&+& c_3c_4(c_4 - c_3)(1 - c_4)d_3d_4 + 
c_3c_5(c_5 - c_3)(1 - c_5)d_3d_5 + c_4c_5(c_5 - c_4)(1 - c_5)d_4d_5\nonumber\\
&-& [c_2c_3(c_3 - c_2)(1 - c_4) + c_2c_4(c_4 - c_3)(1 - c_4)]d_2d_3d_4\nonumber\\
&-& [c_2c_3(c_3 - c_2)(1 - c_5) + c_2c_5(c_5 - c_3)(1 - c_5)]d_2d_3d_5\nonumber\\
&-& [c_2c_4(c_4 - c_2)(1 - c_5) + c_2c_5(c_5 - c_4)(1 - c_5)]d_2d_4d_5\nonumber\\
&-& [c_3c_4(c_4 - c_3)(1 - c_5) + c_3c_5(c_5 - c_4)(1 - c_5)]d_3d_4d_5\nonumber\\
&+& [c_2c_3(c_3 - c_2)(1 - c_5) + c_2c_4(c_4 - c_3)(1 - c_5) + 
c_2c_5(c_5 - c_4)(1 - c_5)]d_2d_3d_4d_5.
\end{eqnarray}
\begin{eqnarray}
\label{eq_oc_d_bac2}
\frac{1}{12} &=&c_2^2(c_3 - c_2)(1 - c_3)d_2d_3 + c_2^2(c_4 - c_2)(1 - c_4)d_2d_4 + 
c_2^2(c_5 - c_2)(1 - c_5)d_2d_5\nonumber\\
&+& c_3^2(c_4 - c_3)(1 - c_4)d_3d_4 + c_3^2(c_5 - c_3)(1 - c_5)d_3d_5 
+ c_4^2(c_5 - c_4)(1 - c_5)d_4d_5\nonumber\\
&-& c_2^2(c_4 - c_2)(1 - c_4)d_2d_3d_4 - c_2^2(c_5 - c_2)(1 - c_5)d_2d_3d_5\nonumber\\
&-& c_2^2(c_5 - c_2)(1 - c_5)d_2d_4d_5 - c_3^2(c_5 - c_3)(1 - c_5)d_3d_4d_5\nonumber\\
&+& c_2^2(c_5 - c_2)(1 - c_5)d_2d_3d_4d_5.
\end{eqnarray}
\begin{eqnarray}
\label{eq_oc_d_ba2c}
\frac{1}{24} &=& c_2(c_3 - c_2)(c_4 - c_3)(1 - c_4)d_2d_3d_4 
+ c_2(c_3 - c_2)(c_5 - c_3)(1 - c_5)d_2d_3d_5\nonumber\\
&+& c_2(c_5 - c_4)(c_4 - c_2)(1 - c_5)d_2d_4d_5 
+ c_3(c_4 - c_3)(c_5 - c_4)(1 - c_5)d_3d_4d_5\nonumber\\
&-& [c_2(c_3 - c_2)(c_5 - c_3)(1 - c_5) + 
c_2(c_5 - c_4)(c_4 - c_3)(1 - c_5)]d_2d_3d_4d_5.
\end{eqnarray}

\section{Independent checks of the (5,4) solutions}
\label{sec_app_checks54}
It was found in the course of this work that it is convenient to search for numerical solutions of the order conditions in the basis of the Butcher tableau coefficients $a_{ij}$, $b_i$, $c_i$, as advocated in Ref.~\cite{Bazavov2025a}. For (5,4) methods there are 15 variables and 14 2N-storage order conditions. For Groebner basis manipulations, however, this system of equations is too computationally expensive, and the $d$-form of the order conditions proved more beneficial. The goal is to minimize the number of variables and total monomial powers. For a (5,4) method there are nine non-trivial parameters once 2N-storage constraints are explicitly taken into account, either $A_2,\dots A_5$, $B_1,\dots,B_5$ or $c_2,\dots,c_5$, $b_1,\dots,b_5$ if Eq.~(\ref{eq_a_bc_rec}) is used. In the $d$-form $d_1=B_1/c_2=1$, so there is one variable less, compensated by the fact that the first-order conditions, Eq.~(\ref{eq_oc_RK_b}) is fulfilled by construction. Thus, there are seven equations (\ref{eq_oc_d_bc})--(\ref{eq_oc_d_ba2c}), listed in \ref{sec_app_dform}, and eight variables $c_2,\dots,c_5$, $d_2,\dots,d_5$. By inspection one finds that in the (5,4) $d$-form order conditions $d_2$ is always accompanied by $c_2$ and $d_5$ by $1-c_5$. Thus, introducing $d'_2=c_2d_2$ and $d'_5=(1-c_5)d_5$ ($\equiv B_5$) decreases some of the monomial powers by one or two orders. In the new form, the order conditions are manageable for, \textit{e.g.}, the Singular package~\cite{DGPS}. There is still one free parameter. More simplifications are achieved when the free parameter is taken as $c_2=1$ or $\{c_3,c_4,c_5\}=\{0,1\}$. It takes 2-3 hours on a laptop for Singular to reduce the system to a triangular form. Those equations are then quickly solved with Mathematica~\cite{Mathematica}. Next, one can fix the free parameter at some arbitrary value, in particular, $c_4=\{2/3,3/5,4/7,5/8,5/9\}$ were solved. Those cases take up to 6 hours to reduce the system. About 120 solutions were found in this way. All of them landed on the already computed branches of solutions shown in Fig.~\ref{fig_CKcurves}. Apart from validating the solutions computed directly with the Newton-Raphson method on the original order conditions, this shows that the $d$-form order conditions may be advantageous for semi-analytic work.

\section{Matlab script for constructing several $c$-reflected methods}
\label{sec_app_matlab}
The Matlab script listed below computes the $c$-reflected pairs for several fourth-order 2N-storage Runge-Kutta methods available in the literature. The order conditions are checked deliberately in the standard form, Eqs.~(\ref{eq_oc_RK_b})--(\ref{eq_oc_RK_ba2c}).

\verbatiminput{check_cref.m}

%\bibliographystyle{elsarticle-num}
%\bibliography{ref}
\bibliography{lsrk_cr}

\begin{thebibliography}{10}
\expandafter\ifx\csname url\endcsname\relax
  \def\url#1{\texttt{#1}}\fi
\expandafter\ifx\csname urlprefix\endcsname\relax\def\urlprefix{URL }\fi
\expandafter\ifx\csname href\endcsname\relax
  \def\href#1#2{#2} \def\path#1{#1}\fi

\bibitem{Bazavov2025a}
A.~Bazavov, \href{https://arxiv.org/abs/2506.07359}{{2N}-storage
  {Runge}-{Kutta} methods: {Order} conditions, general properties and some
  analytic solutions} (2025).
\newblock \href {http://arxiv.org/abs/2506.07359} {\path{arXiv:2506.07359}}.
\newline\urlprefix\url{https://arxiv.org/abs/2506.07359}

\bibitem{WILLIAMSON198048}
J.~Williamson,
  \href{http://www.sciencedirect.com/science/article/pii/0021999180900339}{Low-storage
  {Runge}-{Kutta} schemes}, Journal of Computational Physics 35~(1) (1980) 48
  -- 56.
\newblock \href {https://doi.org/https://doi.org/10.1016/0021-9991(80)90033-9}
  {\path{doi:https://doi.org/10.1016/0021-9991(80)90033-9}}.
\newline\urlprefix\url{http://www.sciencedirect.com/science/article/pii/0021999180900339}

\bibitem{CK1994}
M.~Carpenter, C.~Kennedy, Fourth-order {2N}-storage {Runge}-{Kutta} schemes,
  Tech. Rep. NASA-TM-109112, NASA (1994).

\bibitem{HairerBook1}
E.~Hairer, S.~N{\o}rsett, G.~Wanner, Solving Ordinary Differential Equations
  {I} Nonstiff problems, 2nd Edition, Springer, Berlin, 2000.

\bibitem{ButcherBook}
J.~Butcher, Numerical Methods for Ordinary Differential Equations, 3rd Edition,
  Wiley, 2016.

\bibitem{Bazavov2021}
A.~Bazavov, \href{http://dx.doi.org/10.1007/s10543-021-00892-x}{Commutator-free
  {Lie} group methods with minimum storage requirements and reuse of
  exponentials}, BIT Numerical Mathematics 62~(3) (2021) 745–771.
\newblock \href {https://doi.org/10.1007/s10543-021-00892-x}
  {\path{doi:10.1007/s10543-021-00892-x}}.
\newline\urlprefix\url{http://dx.doi.org/10.1007/s10543-021-00892-x}

\bibitem{SHU1988439}
C.-W. Shu, S.~Osher,
  \href{https://www.sciencedirect.com/science/article/pii/0021999188901775}{Efficient
  implementation of essentially non-oscillatory shock-capturing schemes},
  Journal of Computational Physics 77~(2) (1988) 439--471.
\newblock \href {https://doi.org/https://doi.org/10.1016/0021-9991(88)90177-5}
  {\path{doi:https://doi.org/10.1016/0021-9991(88)90177-5}}.
\newline\urlprefix\url{https://www.sciencedirect.com/science/article/pii/0021999188901775}

\bibitem{Yan2017}
Y.~an~Yan, Low-storage {Runge}-{Kutta} method for simulating time-dependent
  quantum dynamics, Chinese Journal of Chemical Physics 30~(3) (2017) 277 --
  286.

\bibitem{BBBproc2004}
J.~Berland, C.~Bogey, C.~Bailly,
  \href{https://arc.aiaa.org/doi/abs/10.2514/6.2004-2814}{Optimized Explicit
  Schemes: Matching and Boundary Schemes, and 4th-order Runge-Kutta Algorithm},
  2012.
\newblock \href
  {http://arxiv.org/abs/https://arc.aiaa.org/doi/pdf/10.2514/6.2004-2814}
  {\path{arXiv:https://arc.aiaa.org/doi/pdf/10.2514/6.2004-2814}}, \href
  {https://doi.org/10.2514/6.2004-2814} {\path{doi:10.2514/6.2004-2814}}.
\newline\urlprefix\url{https://arc.aiaa.org/doi/abs/10.2514/6.2004-2814}

\bibitem{BERLAND20061459}
J.~Berland, C.~Bogey, C.~Bailly,
  \href{http://www.sciencedirect.com/science/article/pii/S0045793005000575}{Low-dissipation
  and low-dispersion fourth-order {Runge}-{Kutta} algorithm}, Computers and
  Fluids 35~(10) (2006) 1459 -- 1463.
\newblock \href
  {https://doi.org/https://doi.org/10.1016/j.compfluid.2005.04.003}
  {\path{doi:https://doi.org/10.1016/j.compfluid.2005.04.003}}.
\newline\urlprefix\url{http://www.sciencedirect.com/science/article/pii/S0045793005000575}

\bibitem{ALLAMPALLI20093837}
V.~Allampalli, R.~Hixon, M.~Nallasamy, S.~D. Sawyer,
  \href{http://www.sciencedirect.com/science/article/pii/S002199910900084
  9}{High-accuracy large-step explicit {Runge}-{Kutta} ({HALE-RK}) schemes for
  computational aeroacoustics}, Journal of Computational Physics 228~(10)
  (2009) 3837 -- 3850.
\newblock \href {https://doi.org/https://doi.org/10.1016/j.jcp.2009.02.015}
  {\path{doi:https://doi.org/10.1016/j.jcp.2009.02.015}}.
\newline\urlprefix\url{http://www.sciencedirect.com/science/article/pii/S002199910900084
  9}

\bibitem{TOULORGE20122067}
T.~Toulorge, W.~Desmet,
  \href{http://www.sciencedirect.com/science/article/pii/S0021999111006796}{Optimal
  {Runge}-{Kutta} schemes for discontinuous {Galerkin} space discretizations
  applied to wave propagation problems}, Journal of Computational Physics
  231~(4) (2012) 2067 -- 2091.
\newblock \href {https://doi.org/https://doi.org/10.1016/j.jcp.2011.11.024}
  {\path{doi:https://doi.org/10.1016/j.jcp.2011.11.024}}.
\newline\urlprefix\url{http://www.sciencedirect.com/science/article/pii/S0021999111006796}

\bibitem{NIEGEMANN2012364}
J.~Niegemann, R.~Diehl, K.~Busch,
  \href{http://www.sciencedirect.com/science/article/pii/S0021999111005213}{Efficient
  low-storage {Runge}-{Kutta} schemes with optimized stability regions},
  Journal of Computational Physics 231~(2) (2012) 364 -- 372.
\newblock \href {https://doi.org/https://doi.org/10.1016/j.jcp.2011.09.003}
  {\path{doi:https://doi.org/10.1016/j.jcp.2011.09.003}}.
\newline\urlprefix\url{http://www.sciencedirect.com/science/article/pii/S0021999111005213}

\bibitem{NumRecBook}
W.~H. Press, W.~T. Vetterling, S.~A. Teukolsky, B.~P. Flannery, Numerical
  Recipes in C++: the art of scientific computing, 2nd Edition, Cambridge
  University Press, USA, 2001.

\bibitem{BHSW06}
D.~J. Bates, J.~D. Hauenstein, A.~J. Sommese, C.~W. Wampler, Bertini: Software
  for numerical algebraic geometry, Available at bertini.nd.edu with permanent
  doi: dx.doi.org/10.7274/R0H41PB5.

\bibitem{MPFR}
L.~Fousse, G.~Hanrot, V.~Lef\`{e}vre, P.~P\'{e}lissier, P.~Zimmermann,
  \href{https://doi.org/10.1145/1236463.1236468}{Mpfr: A multiple-precision
  binary floating-point library with correct rounding}, ACM Trans. Math. Softw.
  33~(2) (2007) 13–es.
\newblock \href {https://doi.org/10.1145/1236463.1236468}
  {\path{doi:10.1145/1236463.1236468}}.
\newline\urlprefix\url{https://doi.org/10.1145/1236463.1236468}

\bibitem{DGPS}
W.~Decker, G.-M. Greuel, G.~Pfister, H.~Sch\"onemann, {\sc Singular} {4-3-2}
  --- {A} computer algebra system for polynomial computations,
  \url{http://www.singular.uni-kl.de} (2023).

\bibitem{Mathematica}
W.~R. Inc., \href{https://www.wolfram.com/mathematica}{Mathematica, {V}ersion
  14.2}, champaign, IL, 2024.
\newline\urlprefix\url{https://www.wolfram.com/mathematica}

\end{thebibliography}

\end{document}